
\documentclass[12pt,reqno]{article}
\usepackage[utf8]{inputenc}
\usepackage{enumerate}
\usepackage{lmodern}
\usepackage[T1]{fontenc}
\usepackage{verbatim}
\usepackage{textcomp}
\usepackage{tikz}

\usepackage[english]{babel}
\usepackage[a4paper,vmargin={3.5cm,3.5cm},hmargin={2.5cm,2.5cm}]{geometry}
\usepackage[font=sf, labelfont={sf,bf}, margin=1cm]{caption}
\usepackage[pdftex]{hyperref}
\usepackage{xcolor}
\usepackage{amsmath,amsfonts,amssymb,amsthm,mathrsfs,mathtools,bbm}
\usepackage{empheq}
\newtheorem{theorem}{Theorem}[section]

\newtheorem{lemma}[theorem]{Lemma}
\newtheorem{proposition}[theorem]{Proposition}

\theoremstyle{remark}
\newtheorem{remark}[theorem]{Remark}
\theoremstyle{definition}
\newtheorem{definition}[theorem]{Definition}

\numberwithin{equation}{section}

\newcommand{\Po}{{\cal P}}
\newcommand{\cV}{{\cal V}}
\newcommand{\cC}{{\cal C}}
\newcommand{\tcC}{\tilde{{\cal C}}}
\newcommand{\cH}{{\cal H}}
\newcommand{\X}{{\cal X}}
\newcommand{\cX}{{\cal X}}
\newcommand{\cY}{{\cal Y}}
\newcommand{\la}{\lambda}
\newcommand{\R}{\mathbb{R}}
\newcommand{\RR}{\mathbb{R}}
\newcommand{\N}{\mathbb{N}}
\newcommand{\NN}{\mathbb{N}}
\newcommand{\E}{\mathbb{E}}
\newcommand{\EE}{\mathbb{E}}
\newcommand{\Z}{\mathbb{Z}}
\newcommand{\LL}{\mathscr L}
\newcommand{\MM}{\mathscr M}
\newcommand{\cE}{\mathscr E}

\renewcommand{\Pr}{\mathbb{P}}
\newcommand{\PP}{\mathbb{P}}
\newcommand{\bx}{{\bf x}}
\newcommand{\bX}{{\bf X}}
\newcommand{\bY}{{\bf Y}}
\newcommand{\by}{{\bf y}}
\newcommand{\bz}{{\bf z}}

\newcommand{\vol}{{\rm Vol}}
\newcommand{\diam}{{\rm Diam}}
\renewcommand{\emptyset}{\varnothing}
\newcommand{\cF}{\mathcal{F}}
\newcommand{\cN}{\mathcal{N}}
\newcommand{\tX}{\tilde{X}}

 \newcommand{\tocc}{\overset{c.c.}\longrightarrow}
 \newcommand{\toas}{\overset{a.s.}\longrightarrow}
 \newcommand{\toP}{\overset{\Pr}\longrightarrow}
 \newcommand{\toD}{\overset{{\cal D}}\longrightarrow}

\newcommand{\fmax}{f_{\rm max}}

\newcommand{\dtv}{d_{\mathrm{TV}}}
\newcommand{\dk}{d_{\mathrm{K}}}
\newcommand{\dw}{d_{\mathrm{W}}}

\newcommand{\de}{\delta}

\newcommand{\ep}{\varepsilon}
\newcommand{\eps}{\varepsilon}
\newcommand{\ph}{\varphi}
\newcommand{\Var}{\mathbb{V}\mathrm{ar}}
\newcommand{\1}{\mathbbm{1}}
\newcommand{\bN}{\mathbf{N}}

\newcommand{\bea}{\begin{eqnarray}}
\newcommand{\eea}{\end{eqnarray}}
\newcommand{\bean}{\begin{eqnarray*}}
\newcommand{\eean}{\end{eqnarray*}}

\DeclareMathOperator\dist{dist}

\newcommand{\Bin}{\mathrm{Bin}}

\setcounter{footnote}{1}

\newcommand{\mynegspace}{\hspace{-0.12em}}
\newcommand{\lvvvert}{\rvert\mynegspace\rvert\mynegspace\rvert}
\newcommand{\rvvvert}{\rvert\mynegspace\rvert\mynegspace\rvert}

\begin{document}
\title{\bf On $k$-clusters of high-intensity random geometric graphs
}
\author{Mathew D. Penrose \thanks{Department of
Mathematical Sciences, University of Bath, Bath BA2 7AY, United
Kingdom: {\texttt m.d.penrose@bath.ac.uk}
}
  \and Xiaochuan Yang
  \thanks{Department of Mathematics, Brunel University London, Uxbridge, UB8 3PH, United Kingdom: {\texttt xiaochuan.yang@brunel.ac.uk}
  {\texttt https://orcid.org/0000-0003-2435-4615}
  }
\\
}




\date{\today}

\maketitle

\begin{abstract}   
	Let $k,d $ be positive integers.  We determine a sequence of constants that are asymptotic to the probability that the cluster at the origin in a $d$-dimensional Poisson Boolean model with balls of fixed radius is of order $k$, as the intensity becomes large.  Using this, we  determine the  asymptotics of the mean of the number of components of order $k$, denoted $S_{n,k}$ in a random geometric graph on $n$ uniformly distributed vertices in a smoothly bounded compact region of $R^d$, with distance parameter $r(n)$ chosen so that the expected degree grows slowly as $n$ becomes large (the so-called mildly dense limiting regime). We also show that the variance of $S_{n,k}$ is asymptotic to its mean, and prove Poisson and normal approximation results for $S_{n,k}$ in this limiting regime. We provide analogous results for the corresponding Poisson process (i.e. with a Poisson number of points).

	 We also give similar results in the so-called mildly sparse limiting regime where $r(n)$ is chosen so the expected degree decays slowly to zero as $n $ becomes large.
	\\
\end{abstract}

\section{Overview}
\label{secintro}

\subsection{Introduction and background}
\label{subsecintro}

The {\em Poisson blob model} (PBM) of overlapping spheres centred on
random points in space, also known as the {\em Gilbert graph},
is perhaps the simplest model of random
clustering in a continuous space. Cluster formation is governed
entirely by spatial proximity of particles, and their locations
are governed by complete spatial randomness with no interactions, i.e. a 
homogeneous spatial Poisson process.
The {\em random geometric graph} (RGG) is obtained by restricting the PBM to
a finite window, or in an alternate version, by specifying the number of particles in the window.

In the present paper we investigate these models at high intensity, i.e. with
a  high density of particles relative to the range at which connections form.  
For high densities, one expects most of the particles to lie in a single giant cluster, but the spatial randomness means that from time to time smaller
clusters may also be seen. We consider here clusters of fixed order $k$.
We provide precise asymptotics on the frequency of these clusters
as the intensity increases, and moreover describe the fluctuations of the number of clusters in a window that grows in size simultaneously
with the growth of the intensity; the window needs to be large enough to
observe clusters of order $k$. We also describe the limiting internal spatial
distribution of the $k$-clusters (suitably rescaled) at high intensity,
and show it is governed by an interesting energy functional defined on
configurations of $k$ points.

Formally, the
RGG based on a random sample
$\cX$ of points in $\R^d$ (with $d \in \N$) 
is the graph $G(\cX,r)$ with vertex set $\X$ and with an edge between each pair
of points distant at most $r$ apart, in the Euclidean metric, for a specified 
distance parameter $r >0$. Such graphs are important in a variety
of applications (see \cite{Pen03}), such as wireless communications 
(see \cite{BB09})
and
topological data analysis (TDA) (see \cite{BK18,BK22}).

The  PBM 
is given by a union of balls of equal radius, say $\frac12$, centred
a homogeneous Poisson process $\cH_\la$ of intensity $\la $ in $\R^d$.
The clusters of the PBM correspond to
those of 
the graph $G(\cH_\la,1)$,
which is often called the {\em Gilbert graph}.
The terminology `Poisson blob model'  is used in e.g.
\cite{Ale93, QT96}.
The PBM is a special case of the
{\em Poisson Boolean model} 
with balls of fixed radius. This
is a fundamental model of spatial clustering and continuum percolation in
stochastic geometry; see
 \cite{CSKM13}, \cite{HallBk}, \cite{LP18}, \cite{MR96},
 \cite{SW08}, \cite{Tor02}.

In this paper we consider, for fixed $k \in \N$, 
the {\em number of components of order $k$} (i.e., having
$k$ vertices) of the graph $G(\X,r)$,
here denoted $K_{k,r}(\X)$,
where $\X $ is either a random sample of $n$ points,
denoted $\X_n$, 
uniformly distributed over a compact set  
$A$ in $\R^d$
with a smooth  (in fact, $C^{1,1}$) boundary 
or the corresponding Poisson process, denoted $\Po_n$. In particular,
we investigate asymptotic properties of $K_{k,r}(\X_n)$
and $K_{k,r}(\Po_n)$ for large $n$ with
 $r=r(n)$ specified and decaying to zero according
to a certain limiting regime (see (\ref{e:supcri}) and
(\ref{e:sublog}) below).
In the special case $k=1$, $K_{1,r}(\X)$ is the number of isolated
vertices in $G(\X,r)$.

Some limiting regimes have been considered already.
In the {\em thermodynamic} limiting regime with $n r^d $ held constant,
$K_{k,r}(\X_n)$ 
is known to grow proportionately to $n$, with a strong law of large numbers
(LLN)
(see \cite[Theorem 3.15]{Pen03}) and  there are central limit theorems 
(CLTs) both for $K_{k,r}(\X_n)$  (see \cite[Theorem 3.14]{Pen03})
and for $K_{k,r}(\Po_n)$ (see \cite[Theorem 3.11]{Pen03}).
There is also a strong LLN for $K_{k,r}(\X_n)$ in 
the {\em sparse} regime with $nr^d \to 0$, subject to some further conditions
on $r(n)$; see \cite[Theorem 3.19]{Pen03}.
Within the sparse regime,
 when $n(nr^d)^{(k-1)} \to \infty$ but $n(nr^d)^{k}  \to 0$,
a CLT for $K_{k,r}(\X_n)$ can be derived from \cite[Theorem 3.5]{Pen03}
and the fact that the second condition implies that the number
of components of order larger than $k$ vanishes, in probability.
In fact the results in \cite{Pen03}
are stated for the number of components isomorphic to some specified
connected graph $\Gamma$,
but results for $K_{k,r}(\X_n)$ or
$K_{k,r}(\Po_n)$ can then be obtained by summing over all possible 
 $\Gamma$ with $k$ vertices. More recently, \cite{HO23}
 gives a large deviations principle for the $k$-component
 count in the sparse regime when also $n(nr^d)^{k-1} \to \infty$.

In the  {\em logarithmic regime} we take
$n \theta r^d = b \log n$ for some constant $b$, 
 where $\theta$ denotes the volume of the unit radius ball in $\RR^d$.
Provided $d \ge 2$ and
$b$ exceeds the critical value $b_c = (2-2/d) \vol(A)$
(where $\vol$ denotes Lebesgue measure),
it is known that with probability tending to 1 as $n \to \infty$,
$G(\X_n,r) $ is fully connected 
so that $K_{k,r}(\X_n)=0$  (see \cite[Theorem 13.7]{Pen03}).

The main limiting regime for $r=r(n)$ that we consider here is to make
the assumptions
\begin{align}\label{e:supcri}
	\lim_{n \to \infty} (nr^d) & = \infty; 
	\\
	\label{e:sublog}
	\limsup_{n \to \infty}((\theta n r^d)/ \log n) & < \min((2/d),1) \vol (A).
\end{align}
We call this the {\em intermediate} or 
{\em mildly dense} regime because the average
vertex degree is of order $\Theta(nr^d)$ and therefore grows
to infinity as $n$ becomes large, but only slowly in this regime.
Our intermediate regime includes the logarithmic regime for small
values of the constant $b$;
we discuss the significance of the upper bound in (\ref{e:sublog}) later on.

While asymptotics for the number of $k$-clusters
in continuum clustering models of this sort  are a classical object
of study
\cite{Pen03,
Hall86,HallBk,HZ77,Tor02,QT96}, there has also beeen renewed
interest 
more recently, e.g. \cite{HO23}, often motivated by
TDA, where topological properties
of a simplicial complex based on the sample of points are used to
try to understand those of the underlying space. The sum $\sum_k K_{k,r} (\X)$
is the {\em total number of components}  in $G (\X,r)$, and this
is of interest in TDA as one of the Betti numbers \cite{BK18,BK22,KM13}

In TDA the number of components (and other Betti numbers) are of
interest in the whole
range of values of $r$, not just in the sparse or thermodynamic limiting
regime. One can also imagine other situations where the dense limit is
of interest: for example the structure of impurities in a gas that
gets more and more compressed, or that of isolated communities
in a society that becomes more and more interconnected. 
However, most of the existing work on the limit theory
is concerned with the
sparse or thermodynamic limit; in the TDA context, this is noted in
the last paragraph of \cite[Section 2.4.1]{BK22}. 
It seems well worth building our understanding of the mildly dense limit 
as well.

In a companion paper \cite{PY25} we investigate the limiting behaviour of
the total number of components in the mildly dense limiting regime.
It turns out that this quantity is dominated by the first term in
the sum, namely $K_{1,r}(\X)$ (which is also the case in the sparse limit,
as observed long ago in \cite{Hall86}). The results in
\cite{PY25} rely on those in the present paper, in particular 
the variance asymptotics in Propositions 
\ref{p:var_k} and \ref{p:var_iso_bin} and
the Poisson approximation results in Theorem \ref{t:int_k}. 
While it is the special case $k=1$ of our results here that are most
relevant to \cite{PY25}, one can imagine some algorithm whereby all singletons
are removed from the sample, and one is interested in the number of remaining
clusters. In this case we would expect the 2-clusters to dominate, and
if these too were removed, the 3-clusters would dominate, and so on. So it
seems natural to investigate the behaviour of $k$-clusters for general $k$.

\subsection{Summary of results}

Regarding the Poisson blob model, the number of components of order $k$
in $G(\cH_\la,1)$ will be infinite. Instead, we consider the
probability $p_k(\la)$ that an inserted point at the origin
$o$ lies in a component of order $k$ of $G(\cH_{\la} \cup \{o\},1)$.
This can be interpreted, loosely speaking, as the proportion
of vertices in $G(\cH_\la,1)$ that lie in components of order $k$.

Our first result, Theorem \ref{t:alpha},
determines the asymptotic behaviour of
$p_k(\la)$ for large $\la$ and fixed $k \in \N$.
It says that $p_k(\lambda) \sim \alpha_k\lambda^{(1-k)(d-1)}e^{-\theta
\lambda}$ as $\lambda \to \infty$,
where $\alpha_1 :=1$ and for $k \in \N$, $\alpha_{k+1}
: = (1/k!) \int_{(\R^d)^k} e^{-g(\bz)} d\bz$, where
$$
g(\bz) := \int_{\cup_{i=1}^k B (\frac12 z_i,\|z_i\|/2)} \|x\|^{1-d} d x,
~~~~ \bz = (z_1,\ldots,z_k) \in (\R^d)^k.
$$
Here and elsewhere, for $x \in \R^d$, $r>0$ we write  $B(x,r) $ or $ B_r(x)$
for $\{y \in \R^d: \|y-x\|\leq r\}$,
and  $\|\cdot\|$ denotes the Euclidean norm on $\R^d$.
This result is both of interest in its
own right, and important for understanding the asymptotics
for number of components of order $k$ in RGGs in the mildly dense regime.

In Theorem \ref{t:ptprlim}, we shall provide
%
%
a result on the large-$\lambda$
limiting distribution of the (rescaled) points of
 the 
component containing the origin,
given that it is of order $k+1$. Namely, their joint density 
is given by $e^{-g(\bz)}$, normalized to a probability density function. 
We have not seen the `energy functional' $g(\cdot)$ in the literature before.



We now describe our main results on finite RGGs.  Let
$A \subset \R^d$ be compact with smooth boundary (in the $C^{1,1}$ sense
that we define later), and let
 $X_1,X_2,\ldots$  be independent identically distributed random $d$-vectors,
 uniformly distributed over $A$.
 For $n\in\NN$ set
 $\cX_n:=\{X_1,\ldots,X_n\}$,  which is a binomial point process
 (see e.g. \cite{LP18}).
 Also, for $n \in (0,\infty)$  
 let $Z_n$ be a Poisson random variable with mean $n$,
 independent of $(X_1,X_2,\ldots)$, and set
 $\Po_n:=\cX_{Z_n}$. Then
 (see \cite[Proposition 3.5]{LP18})
 $\Po_n$
 is a Poisson point process in $A$ with intensity measure $(n/\vol(A))dx$
 (in this case  $n$ does not need to be an integer). 
 
 Our results are concerned with the variables $S_{n,k} := K_{k,r}(\cX_n)$ and
$S'_{n,k} :=K_{k,r}(\Po_n$), with $r=r(n)$ given, satisfying
(\ref{e:supcri}) and (\ref{e:sublog})  unless stated otherwise.
For now  we present them as asymptotic results as $n \to \infty$
with $k$ fixed, but the precise statements of these results
later on come with bounds on the rates of convergence.

As first-order results we shall give the asymptotic
behaviour of $\E[S_{n,k}]$ and $\E[S'_{n,k}]$. 
Setting $I_{n,k} := \E[S'_{n,k}]$, we show in Theorem \ref{t:Ilim} that
(with $\alpha_k$ given above)
\begin{align}
	I_{n,k} \sim 
	k^{-1} \alpha_k n(nr^d/\vol(A))^{(1-k)(d-1)}
	\exp(- (\theta / \vol(A)) n r^d )
	\label{e:Inasymp}
\end{align}
as $n \to \infty$.
We show in Proposition
	\ref{p:Snmeanbin} that also
$\E[S_{n,k}] \sim I_{n,k}$ as $n \to \infty$.

Turning to second-order results, we shall 
show in Propositions \ref{p:var_k} and \ref{p:var_iso_bin}
that 
\begin{align}
	\Var[S_{n,k}] \sim 
	\Var[S'_{n,k}] \sim I_{n,k}  
	~~~~~~~~~~~~~~~
	{\rm as} ~n \to \infty.
	\label{e:Varlim}
\end{align}


Note that (\ref{e:Inasymp}), (\ref{e:supcri}) and (\ref{e:sublog})
together imply that $I_{n,k} \to \infty$ as
$n \to \infty$.
It is immediate from the asymptotics already described, and Chebyshev's
inequality, that we have the
weak LLNs $(S_{n,k}/I_{n,k}) \toP 1$ and $(S'_{n,k}/I_{n,k})
\toP 1$ as $n \to \infty$,
and hence 
using (\ref{e:Inasymp}) we have 
	$S_{n,k} /(k^{-1} \alpha_k n(nr^d/\vol A)^{(1-k)(d-1)}e^{-f_0 \theta n r^d}) \toP 1$
	and likewise for $S'_{n,k}$, where $c$ is as before.  In Theorem
	\ref{t:non-uniform},
	under the extra condition $\limsup((n\theta r^d)/\log n) < \vol(A)/2$,
	we improve the weak LLNs
	to strong LLNs, i.e. we
	prove almost sure convergence
	\begin{align*}
		S_{n,k} /(k^{-1} \alpha_k n(nr^d/\vol(A))^{(1-k)(d-1)}e^{-f_0 \theta n r^d}) \toas 1
		~~~~{\rm as}~ n \to \infty,
	\end{align*}
	and likewise for $S'_{n,k}$.
	We do this by giving
	concentration of measure results for $S_{n,k}$ and for $S'_{n,k}$.

Turning to CLTs,
in Theorem \ref{t:int_k}
we show that as $n \to \infty$, 
\begin{align}
I_{n,k}^{-1/2}(S'_{n,k} - I_{n,k}) \toD N(0,1);~~~~~
I_{n,k}^{-1/2}(S_{n,k} - \E[S_{n,k}]) \toD N(0,1).
	\label{e:CLTgen}
\end{align}
We shall give two approaches to proving (\ref{e:CLTgen}); one via
approximating $S_{n,k}$ 
by a Poisson distribution with
mean $I_{n,k}$ (of interest in itself), and the other via more direct normal
approximation. Both methods provide bounds on rates of convergence
in various metrics on probability distributions.

 Finally, in Section \ref{s:sparse}  we shall present some new results in the {\em sparse} limiting
regime, where instead of (\ref{e:supcri}) we assume $nr^d \to 0$ and
$n(nr^d)^{k-1} \to \infty$ as $n \to \infty$. In this case,
instead of (\ref{e:Inasymp}) we have $I_{n,k} \sim c n(nr^d)^{k-1}$ for
appropriate $c$.   Again in
this regime we can show that $\E[S_{n,k} ] \sim I_{n,k}$, and that
(\ref{e:Varlim}) and (\ref{e:CLTgen}) still hold, by similar arguments
to those we use in the mildly dense regime.

It is natural to ask whether our results on $S_{n,k}$ and $S'_{n,k}$ 
generalize to non-uniform distributions.  To answer this, we shall present
these  results in greater generality.  We shall assume $X_1,X_2,\ldots$
are independent random $d$-vectors having a common probability distribution 
$\nu$ with density $f$, supported by $A$ (so that $ \Pr[X_i \in dx] = \nu(dx)  
= f(x)dx $ for $x \in A, i \in \N$).
We then define $\X_n$ and $\Po_n$ as already described.

For all of our results on the mildly dense limiting regime,
we shall assume that $f$ is continuous on $A$,
and that $f_0 >0$, where we set 
$$
f_0:= \inf_{x \in A} f(x); ~~~ f_1 := \inf_{x \in \partial A} f(x); ~~~
		\fmax:= \sup_{x \in A} f(x),
$$
and $\partial A$ denotes the boundary of $A$.  By compactness of $A$
and continuity of $f$, we have $\fmax < \infty$; moreover $f_1 \geq f_0$.
 In the special case where $f$ is constant on $A$,
 we have $f_0=f_1=\fmax = 1/\vol(A)$; we call this the {\em uniform case}.

 In the general (non-uniform) case, instead of (\ref{e:sublog}) we assume
 \begin{align}
	\limsup_{n \to \infty}((\theta nr^d)/ \log n)  <
	 \frac{1}{\max(f_0,d (f_0 - f_1/2)) },
	 \label{e:sublog2}
 \end{align}
 which reduces  to (\ref{e:sublog}) in the uniform case.
 This condition may be understood as follows.  Suppose
 $ \theta n r^d = b \log n$ for some constant $b$. 
 A `back-of-the envelope' calculation suggests
 that the mean number of components of order $k$
 (which we shall call {\em $k$-components} or {\em $k$-clusters} for short)
 in the interior of $A$
 is roughly of the order $n (nr^d)^{k-1} 
 e^{-f_0 \theta n r^d} \approx n^{1-b f_0}$
 (up to logarithmic factors), which tends to infinity if $b <1/f_0$.
 Similarly
 the mean number of $k$-clusters 
 within distance $kr$ of the boundary of $A$ is of order
 $nr (nr^d)^{k-1}e^{-f_1 \theta n r^d/2} \approx n^{1-(1/d)- (b f_1/2)}$.
 If $b$ is less than the right hand side
 of (\ref{e:sublog2}), then $ bf_0 < (1/d)+ bf_1/2 $,
 so there are more $k$-clusters
 in the interior of $A$ than near the boundary, 
 suggesting we can ignore boundary effects when
 analysing the asymptotic behaviour of $S_{n,k}$ and $S'_{n,k}$.
 If $b$ were larger than this, we would have to take boundary effects
 into account more carefully, for example to determine
 the analogue to (\ref{e:Inasymp}) in the uniform case,
 and we leave this as possible
 future research.

 All of the results for $S_{n,k}$ and $S'_{n,k}$  described earlier 
 for the uniform case remain valid in the general (non-uniform) case under
 assumptions (\ref{e:supcri}) and (\ref{e:sublog2}), {\em except} for the
 asymptotic (\ref{e:Inasymp}).  In general, $I_{n,k} = \E[S'_{n,k}]$ is
 equal to a multiple integral, given  at (\ref{e:exp_Snk}).
In the non-uniform case we are not able to
describe the limiting behaviour of $I_{n,k}$ so explicitly, but
we do show that $I_{n,k} \to \infty$ and
$(nr^d)^{-1} \log( I_{n,k}/n) \to - \theta f_0$
as $n \to \infty$; see Lemma \ref{l:Ilower} and Theorem
\ref{t:Inonunif}.

\begin{remark}
	\begin{enumerate}
	\item
		As alluded to earlier, in the thermodynamic or
			sparse limiting regime, it is of interest
			to further distinguish between different
			components of order $k$, according to which
			of the possible graphs $\Gamma$ they are
			isomorphic to. In the mildly dense limiting regime
			considered here, this
			is perhaps of less interest, because of the {\em compression}
			phenomenon already noted in \cite{Ale93}; for
			fixed $k$ and large $n$, nearly all
			of the components  of order $k$ are likely to
			be compressed into balls of diameter less than
			$r$ (in fact, less than $\delta r$ for any 
			fixed $\delta>0$), and therefore
			fully connected. In other words, nearly all
			of the components
			of order $k$ are likely to be cliques.

		\item
			 It seems plausible
			that one could derive a result on the
			limiting within-cluster spatial distribution of points in $k$-clusters
			of the RGG in the mildly dense limit,
			suitably rescaled,
			using  Theorem \ref{t:ptprlim}, but we do not do so here.

In \cite{QT96} the mean volume of $k$-clusters of the PBM at finite intensities
is investigated, in particular providing an expansion in $\lambda$ for 
$\lambda$ small. We would expect Theorem 
\ref{t:ptprlim} to be useful for understanding the limiting behaviour
			of the mean volume of $k$-clusters 
for $\lambda $ large.

\item
To prove our Gaussian approximation  results, we shall use the
			local dependence methods of Chen and Shao
			\cite{CS} to deal
			with $S'_{n,k}$ and those of Chatterjee
			\cite{Cha08} to deal with $S_{n,k}$.
For the Poisson approximation, we shall use the coupling bounds from 
e.g. Lindvall \cite{Lin92} for $S_{n,k}$ and from Penrose
			\cite{Pen18} for $S'_{n,k}$.
		All of these approximation bounds 
  are obtained by a blend of Stein's method and stochastic analysis of
	general Poisson or binomial point process functionals.  For a
	systematic account of the subject, we refer the reader
to \cite{Ste86, BHJ92,CGS11,NP12,LP18}, and  to \cite{APY21}
for some recent developments about the methodology. 
\item
Our results hold for all $d \geq 2$, and also for $d=1$ with the sole
exception of our result on Poisson approximation for $S_{n,k}$
(Proposition \ref{p:PoApproxBi}); we conjecture that even this result could
be proved by other means for $d=1$.  Many of our results seem to be new
even for the special case with $k=1$ 
(i.e., the isolated vertex count) and/or $d=1$.
%
	\end{enumerate}
\end{remark}

\subsection{Notation}
We use the following notation for asymptotics.
Given $g: (0,\infty)\to \R$,
and $h: (0,\infty)\to (0,\infty)$,
we write $g(x) = O(h(x))$ if $\limsup |g(x)|/h(x)<\infty$, and 
$g(x)=o(h(x))$ if $\limsup |g(x)|/h(x)=0$, and
$g(x) = \Omega(h(x))$ if $\liminf |g(x)|/h(x) > 0$, and
$g(x) = \Theta(h(x))$ if both
$g(x) =O(h(x))$ and $g(x) = \Omega(h(x))$,
and $g(x) \sim h(x)$ if $g(x)/h(x) $ tends to $1$.
Here, the limit is taken either as $x\to 0$ or $x\to \infty$, to
be specified in each appearance.

We denote the origin in $\R^d$
by $o$, and write just $B_r$ for $B_r(o)$.
Also for finite
$\X \subset \R^d$ and $r>0$, we define
the set $B_r(\X):= \cup_{x \in \X} B_r(x)$, and the
function $h_r(\X)$ to be the indicator of the event that
$G(\X,r)$ is connected. Also for $m \in \N$ and 
$\bx = (x_1,\ldots,x_m) \in (\R^d)^m$ we define 
$B_r(\bx)$ and $h_r(\bx)$ similarly, identifying
$\bx$ with $\{x_1,\ldots,x_m\}$; that is
\bea
B_r(\bx) := \cup_{i=1}^m B_r(x_i); ~~~
h_r(\bx) = \1\{G(\{x_1,\ldots,x_m\},r) ~{\rm is~ connected}\}.
\label{e:Brhrdef}
\eea
Also we define $h^*_r(\bx)$ to be the product of $h_r(\bx)$
and the indicator of the event that $x_1$ precedes all of
$x_2,\ldots,x_m$ in the lexicographic ordering (that is,
$x_1$ is the left-most point of $\{x_1,\ldots,x_m\}$).

For $A \subset \R^d$,
we set $A^{(r)}:= \{x \in A: B_r(x) \subset A\}$,
and $A^o:= \cup_{r >0} A^{(r)}$, the interior of $A$.
If $\cX \subset \R^d$,
and $a >0$
let $a \cX:= \{ax:x \in \cX\}$. 
If also $\cY \subset \R^d$ and $\cX$ and $\cY$ are locally finite
and non-empty, let $\dist(\cX,\cY) := \min_{x \in \cX,y\in \cY} \|y-x\|$.

For $n \in \N$, set $[n]:= \{1,\ldots,n\}$.
If $\ell,m \in \N$ and $\bx = (x_1,\ldots,x_\ell) \in (\R^d)^\ell$,
$\by = (y_1,\ldots,y_m) \in (\R^d)^m$, then we set
$\dist(\bx,\by) := \min_{i \in [\ell],j \in [m]} \|x_i -y_j\|$.

Given a random vector $\bX$ and an event $\mathscr E$ on the same
probability space, let  $\LL(\bX)$ denote the
probability distribution (also called the law)
of $\bX$, and $\LL(\bX|\mathscr E)$ the
conditional probability distribution of $\bX$ given that
event $\mathscr E$ occurs.

Given $r >0$, given locally finite $\X \subset \R^d$
and given $x \in \X$, let $\cC_r(x,\X)$ 
denote the vertex set of the component of the
graph $G(\X,r)$ containing $x$, and
$|\cC_r(x,\X)|$ the number of elements of this set. 

Given $m \in \N$ and $p \in [0,1]$,
we write $\Bin(m,p)$ for a  Binomial
		 random variable with parameters $m,p$,
\section{High-intensity Boolean model}
\label{s:Boolean}


Given $\lambda >0$,
 let $\cH_\la$ be a homogeneous Poisson point process
in $\R^d$ of intensity $\la$ (viewed as a random set of points),
 and let $\cH_{\la,0} := \cH_\la \cup \{o\}$.
Then for $r>0$ the graph $G(\cH_\la,r)$ is the intersection graph of
 a PBM, i.e. a Boolean model of continuum percolation 
 with balls of fixed radius $r/2$. See \cite{LP18,MR96} for background
 information on models of this type.

Let $\cC(\la):=\cC_1(o,\cH_{\la,0})$.
For $k \in \N$
let $p_k(\la):= \Pr[|\cC(\la)|=k]$. 
This section is concerned with the large-$\lambda$ asymptotics of 
$p_k(\la)$ for fixed $k$,
and the asymptotic conditional 
distribution of the point process 
$\cC(\la)$ given $|\cC(\la)|=k$. 
These are relevant to the RGG model described in
the Section \ref{secintro}, as we shall show later on.

In  \cite[Theorem 2.2]{Ale93} Alexander proved (among other things) that as
$\la \to \infty$  we have
\begin{align}
p_{k+1}(\la) = \Theta \left( \la^{-k(d-1)} e^{-\theta \la}\right). 
\label{e:Alex}
\end{align}
In other words, $\la^{k(d-1)} e^{\theta \la} p_{k+1}(\la)$
remains bounded away from $0 $ and $\infty$ as $\la \to \infty$.
This suggests that this quantity might tend to a positive finite
limit as $\la \to \infty$, 
but this is not proved in \cite{Ale93}. Our next result shows
that this is indeed the case, and provides a rate of convergence. To
describe the limit, we define
for each $\bz = (z_1,\ldots,z_k) \in (\R^d)^k$ the set
\bea
D(\bz) := \cup_{i=1}^k B
((1/2)z_i, \|z_i\|/2),
\label{e:Dz}
\eea
that is, the union of balls $B_1^*,\ldots,B_k^*$,
where for each $i$ the ball $B_i^*$ has opposite poles at $o$ and $z_i$
(Figure 1 shows an example with $d=2$).
Define the function $g:(\R^d)^k \to [0,\infty)$ by
\bea
g(\bz) = \int_{D(\bz)} \| x\|^{1-d} dx,  ~~~~ \bz \in
(\R^d)^k.
\label{e:gz}
\eea
When $d=2$, this can be interpreted as the {\em gravitational energy}
of a flat object with the shape of $D(\bz)$ and mass equal
to the area of $D(\bz)$, and with its mass evenly spread,
with respect to a large point mass at the origin;
that is, the energy required to remove the object from the
gravitational field of the point mass.
We are not aware of any physical interpretation of $g(\cdot)$
in other dimensions, but nevertheless refer to $g(\bz)$
as the {\em quasi-gravitational energy} of the set $D(\bz)$.
Set $\alpha_1 := \alpha_1(d):=1$, and for $k \in \N$
define the constant
		\bea
		\alpha_{k+1}:= \alpha_{k+1}(d) : = (1/k!) \int_{(\R^d)^k}
\exp(-g(\bz))d \bz.
\label{e:alpha}
\eea

%
\begin{theorem}
	\label{t:alpha}
	Let $k \in \N \cup \{0\} $. Then as $\la \to \infty$, 
\begin{align}
	\la^{k(d-1)} e^{\theta \la} p_{k+1}(\la) =
	\alpha_{k+1}+ O(\la^{-1}).
\label{e:Alim}
\end{align}
\end{theorem}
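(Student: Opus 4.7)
The $k = 0$ case reduces to the void probability $p_1(\la) = \Pr[\cH_\la \cap B_1(o) = \varnothing] = e^{-\theta \la}$, matching $\alpha_1 = 1$ exactly. For $k \geq 1$, my plan is to start from the Slivnyak--Mecke formula applied to the $k$th factorial moment of $\cH_\la$, which yields
\[
p_{k+1}(\la) \;=\; \frac{\la^k e^{-\theta \la}}{k!} \int_{(\R^d)^k} h_1(o, \bz)\, \exp\!\bigl(-\la (V(\bz) - \theta)\bigr) \, d\bz,
\]
where $V(\bz) := |B_1(\{o, z_1, \ldots, z_k\})|$. Rescaling $\bz = \xi/\la$ (Jacobian $\la^{-dk}$) identifies the target asymptotic with
\[
\frac{1}{k!} \int_{(\R^d)^k} h_1(o, \xi/\la)\, \exp\!\bigl(-\la (V(\xi/\la) - \theta)\bigr) \, d\xi \;=\; \alpha_{k+1} + O(\la^{-1}).
\]

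The heart of the argument is a pointwise geometric expansion $V(\bz) - \theta = g(\bz) + O(\|\bz\|^2)$ valid for small $\bz$. I would establish it by a spherical layer-cake decomposition centred at $o$: writing $y = (1+s)\hat u$ with $\hat u \in S^{d-1}$ and $s \geq 0$, the point $y$ lies in $B_1(z_i)\setminus B_1(o)$ precisely when $0 \leq s \leq \sigma_i(\hat u)$, where solving the defining quadratic gives $\sigma_i(\hat u) = \hat u \cdot z_i - \tfrac12\|z_i - (\hat u \cdot z_i)\hat u\|^2 + O(\|z_i\|^4)$. A parallel parametrisation shows that $B_{\|z_i\|/2}(z_i/2)$ meets the ray through $\hat u$ in the segment $[0, (\hat u \cdot z_i)_+]$, so $g(\bz) = \int_{S^{d-1}} \max_i (\hat u \cdot z_i)_+ \, d\sigma(\hat u)$. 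Comparing the two expressions and expanding the radial Jacobian $(1+s)^{d-1}$, every residual term is bounded by $\|\bz\|^2$. Since $D(a\bz) = aD(\bz)$ and the integrand $\|x\|^{1-d}$ scales accordingly, $g$ is $1$-homogeneous; hence $g(\xi/\la) = g(\xi)/\la$ and $\la(V(\xi/\la) - \theta) = g(\xi) + O(\|\xi\|^2/\la)$.

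To finish, I would split the integration at $\|\xi\|_\infty = M := \sqrt{\la}/2$. On the bulk $\|\xi\|_\infty \leq M$, the connectivity indicator $h_1(o,\xi/\la) = 1$ holds automatically, and the previous estimate yields $|e^{-\la(V-\theta)} - e^{-g(\xi)}| \leq C e^{-g(\xi)} \|\xi\|^2/\la$; since the explicit formula for $g$ gives $g(\xi) \geq c_d \max_i \|\xi_i\|$, the integral $\int \|\xi\|^2 e^{-g(\xi)} d\xi$ converges and the bulk contribution equals $k!\,\alpha_{k+1} + O(\la^{-1})$. For the tail $\|\xi\|_\infty > M$, I would use the robust lower bound $V(\bz) - \theta \geq c'_d \min(\max_i \|z_i\|, 1)$ (monotonicity of $|B_1(z_i) \setminus B_1(o)|$ in $\|z_i\|$ for $\|z_i\|\leq 2$, and disjointness of $B_1(z_i)$ from $B_1(o)$ once $\|z_i\| > 2$) together with the $h_1$-support bound $\max_i \|z_i\| \leq k$; this produces a tail estimate exponentially small in $\sqrt{\la}$, hence $o(\la^{-1})$. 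The main obstacle I anticipate is the quantitative geometric expansion, where the $\max_i$ over balls does not commute with the nonlinear Jacobian $(1+s)^{d-1}$: the $O(\|\bz\|^2)$ remainder must be collected from several distinct sources (the curvature correction to $\sigma_i$, the Taylor expansion of $(1+s)^{d-1}$, and the overlap behaviour between different balls near the unit sphere) and bounded uniformly in $\hat u$ and $\bz$.
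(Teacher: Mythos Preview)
Your proposal is correct and follows essentially the same route as the paper's proof: the paper likewise starts from the Mecke formula, rescales by $\la^{-1}$, isolates the geometric expansion $\la V'(\xi/\la) = g(\xi) + O(\lvvvert\xi\rvvvert^2/\la)$ as a separate lemma (Lemma~\ref{l:QGE}, proved by the same radial parametrisation over $\partial B_1$ and the Thales identification of $D(\bz)$), and then splits the $\xi$-integral at $\lvvvert\xi\rvvvert=\sqrt{\la}$ with the same linear lower bound $g(\xi)\ge c\lvvvert\xi\rvvvert$ governing both tails. The only cosmetic differences are that the paper's derivation of the quadratic error uses the cosine rule and the angle $\beta-\alpha=\arcsin(r\|z\|\sin\alpha)$ rather than your direct algebraic solution, and that the paper splits the tail into two pieces ($\sqrt{\la}<\lvvvert\xi\rvvvert\le k\la$ and $\lvvvert\xi\rvvvert>k\la$) rather than invoking your $\min(\max_i\|z_i\|,1)$ bound in one stroke.
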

The existence of the limit in \eqref{e:Alim} is crucial for our other
results later on, in which we shall  establish
the existence of limiting constants in the limit theory for the
mean and variance of the number of $k$-components of the finite
RGG in the mildly dense
limit.

Let $k \in \N$.  Another part of \cite[Theorem 2.2]{Ale93} says, in effect, that
conditionally on $|\cC(\la)|=k+1$, the random variable
$\lambda \diam (\cC(\la))$ is bounded away from zero and infinity
in probability (here $\diam$ is Euclidean diameter);
in other words, $\diam(\cC(\la))= \Theta(\la^{-1})$ in probability
as $\la \to \infty$, given $|\cC(\la)|=k$.
This phenomenon is known as
{\em compression} \cite{Ale93,MR96}: the density
of points in the cluster $\cC(\la)$ is of the order of $\la^d$,
compared to an ambient density of $\la$.

We  give a stronger result here, namely convergence
in distribution of the point process $\la \cC(\la)$.
To describe the limit, let $(U_1,\ldots, U_k)$ be a random vector
in $(\R^d)^k$ with  joint density with respect to $(dk)$-dimensional
Lebesgue measure, given by $ (k! \alpha_{k+1})^{-1} \exp(-g(\cdot))$,
i.e. with 
\begin{align}
\Pr[ (U_1,\ldots,U_k) \in d\bz ] = (k! \alpha_{k+1})^{-1}
\exp(-g(\bz)) d \bz, ~~~~~
\bz
\in (\R^d)^k,
	\label{e:limPP}
\end{align}
with the energy function $g$  given at (\ref{e:gz}), and $\alpha_{k+1}$
given at (\ref{e:alpha}).  Informally, our result says that the point process
$\la \cC(\la)\setminus \{o\}$ converges in distribution
to the point process $\{U_1,\ldots,U_k\}$. 

To make
this precise while avoiding technicalities on point process
convergence, we list the points of our point processes in increasing 
lexicographic order to get random vectors in $(\R^d)^k$
(alternatively we could use distance from the origin for the ordering). 
Let $Y_{1,\la},\ldots,Y_{k,\la}$ be the points of $\cC(\la) \setminus \{o\}$
taken in increasing lexicographic order, and 
let $V_1,\ldots,V_k $ be the points of $\{U_1,\ldots,U_k\}$ taken
in increasing lexicographic order.

\begin{theorem}
	\label{t:ptprlim}
The conditional distribution of the random  vector  
$
	 ( \la Y_{1,\la},\ldots,\la Y_{k,\la}),
$
	given  $|\cC(\la)|=k+1$, converges weakly,
	as $\la \to \infty$, to  the distribution of
	$(V_1,\ldots,V_k)$.
\end{theorem}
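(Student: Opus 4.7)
The plan is to write the joint density of the rescaled ordered cluster explicitly via the Slivnyak--Mecke formula, show pointwise almost everywhere convergence to the density of $(V_1,\ldots,V_k)$, and conclude via Scheff\'e's lemma.

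Applying the multivariate Mecke formula to $\cH_\la$, and noting that exactly one of the $k!$ orderings of any $(k+1)$-cluster at $o$ puts the non-origin points in strict lex order, yields that on the event $E_\la:=\{|\cC(\la)|=k+1\}$ the random vector $(Y_{1,\la},\ldots,Y_{k,\la})$ has conditional density
\begin{align*}
\frac{\la^k}{p_{k+1}(\la)}\, h_1(o,y_1,\ldots,y_k)\, \exp\!\Bigl(-\la\, \vol\bigl(B_1(\{o,y_1,\ldots,y_k\})\bigr)\Bigr)\, \1\{y_1 <_{\mathrm{lex}} \cdots <_{\mathrm{lex}} y_k\}
\end{align*}
given $E_\la$. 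Substituting $z_i=\la y_i$ (so $dy_i=\la^{-d}dz_i$) turns this into the conditional density $f_\la(\bz)$ of $(\la Y_{1,\la},\ldots,\la Y_{k,\la})$, with prefactor $\la^{-k(d-1)}/p_{k+1}(\la)$ and ball union taken over $\{o,z_1/\la,\ldots,z_k/\la\}$. Fix a target $\bz=(z_1,\ldots,z_k)$ with distinct entries in strict lex order. For $\la$ large enough all of $o,z_1/\la,\ldots,z_k/\la$ are pairwise within unit distance, so $h_1\equiv 1$; and by Theorem~\ref{t:alpha}, $\la^{-k(d-1)}/p_{k+1}(\la)=\alpha_{k+1}^{-1}e^{\theta\la}(1+O(\la^{-1}))$. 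The crucial geometric step is to prove
\begin{align*}
\la\,\Bigl[\vol\bigl(\cup_{i=0}^{k} B_1(z_i/\la)\bigr)-\theta\Bigr]\;\longrightarrow\; g(\bz),\qquad z_0:=o,
\end{align*}
which I would establish by parametrising a point in the excess region as $(1+t)v$ with $v\in S^{d-1}$ and $t>0$, expanding the inequality $\|(1+t)v-z_i/\la\|\leq 1$ to obtain $t\leq (v\cdot z_i)/\la+O(\la^{-2})$ when positive, taking the maximum over $i$, and integrating in polar coordinates. The resulting surface integral $\int_{S^{d-1}}\max_i(v\cdot z_i)^+\,d\sigma(v)$ is identified with $g(\bz)$ via the characterisation $x\in D(\bz)\iff \|x\|\leq \max_i(\hat x\cdot z_i)^+$, after which the polar integral of $\|x\|^{1-d}$ over $D(\bz)$ is immediate. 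Combining these three ingredients gives
\begin{align*}
f_\la(\bz)\;\longrightarrow\;\alpha_{k+1}^{-1}\exp(-g(\bz))\,\1\{z_1<_{\mathrm{lex}}\cdots<_{\mathrm{lex}}z_k\},
\end{align*}
which is precisely the density of $(V_1,\ldots,V_k)$ deduced from (\ref{e:limPP}).

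Since $f_\la$ and the limit are both genuine probability densities on $(\R^d)^k$, Scheff\'e's lemma upgrades the almost-everywhere pointwise convergence to $L^1$ convergence of densities, equivalently $\dtv\bigl(\LL((\la Y_{1,\la},\ldots,\la Y_{k,\la})\mid E_\la),\,\LL(V_1,\ldots,V_k)\bigr)\to 0$, which certainly implies weak convergence. I expect the main obstacle to be the excess-volume asymptotic: the single-ball case is a classical crescent computation, but the $k$-ball version requires treating overlapping crescents collectively, which is exactly what the $\max_i(v\cdot z_i)^+$ pointwise formulation accomplishes. Once the Taylor remainder in the ball-inequality expansion is controlled (routine since $\bz$ is fixed), the remaining normalisation comes directly from Theorem~\ref{t:alpha}, and the Scheff\'e step is automatic.
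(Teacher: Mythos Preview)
Your proposal is correct and follows essentially the same route as the paper: Mecke formula, change of variables, the excess-volume asymptotic (which the paper isolates as Lemma~\ref{l:QGE}), normalization via Theorem~\ref{t:alpha}, and passage to the limit. The only cosmetic differences are that the paper works with expectations of bounded continuous test functions and dominated convergence rather than with densities and Scheff\'e, and that it invokes Lemma~\ref{l:QGE} (already proved en route to Theorem~\ref{t:alpha}) rather than re-deriving the polar-coordinate identification of $g(\bz)$ with $\int_{S^{d-1}}\max_i(v\cdot z_i)^+\,d\sigma(v)$.
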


\begin{remark}
	\begin{enumerate}
		\item
	The result (\ref{e:Alex}) (but not (\ref{e:Alim}))
	also appears in \cite[Section 5.3]{MR96}.
The results in \cite{Ale93,MR96} are also presented for components
of order $k+1$ in $G(\cH_{\la,0},s)$ for general fixed $s>0$ (rather than
just for $s=1$).  It is easy to generalize (\ref{e:Alim}) to this setting,
since by Poisson scaling (see e.g. \cite{MR96}, or the Mapping Theorem
			in \cite{LP18})
$$
\Pr[|\cC_s(o,\cH_{\la,0})|=k] =\Pr [|\cC_1(o,s^{-1} \cH_{\la,0})|=k]
			= p_k(\la s^d).
$$

\item When $d=1$ we can compute $\alpha_{k+1}$ exactly.  In this case we have
$g(z) = \max_{i \in [k]} z_i^+ + \max_{i \in [k]} z_i^-$,
and considering separately the case where $z_1, \ldots, z_k$ all
have the same sign and the case where they do not, we obtain that
\begin{align*}
	k!	\alpha_{k+1} & = 2k \int_0^\infty z^{k-1} e^{-z} dz
	+  k (k-1)\int_0^\infty du \int_0^\infty dv (u+v)^{k-2} e^{-(u+v)} 
	\\
& = 2k! + k(k-1) \int_0^\infty w^{k-2} e^{-w} w dw \\
		& = 2k! + k!  (k-1),
\end{align*}
		so $\alpha_{k+1}(1) = k+1$. 
\item
	We can also compute $\alpha_2(d)$ more explicitly for general $d$.
	Indeed, when $k=1$ it can be seen directly from
			(\ref{e:gdef}) below that $g(z)= \lim_{r \downarrow 0}
			g_r(z) = \theta_{d-1}\|z\|$
	for all $z \in \R^d$, where $\theta_{d-1}$ denotes the volume of a unit
	radius ball in $d-1$ dimensions.  Then by (\ref{e:alpha}) we have
\begin{align*}
	\alpha_2(d) & = \int_{\R^d} \exp(- \theta_{d-1} \|z\|) dz
	= \theta_{d-1}^{-d} \int_{\R^d} e^{- \|u\|} du
	= \theta_{d-1}^{-d} d \theta \int_0^\infty e^{-r}r^{d-1} dr
\\
&	= d! \theta/ \theta_{d-1}^{d}. 
\end{align*}
\item It may be possible to improve the right hand side of (\ref{e:Alim}) to
an infinite power series in $\la^{-1}$ and to compute the first few
coefficients.  This series expansion would provide a high-intensity
complement to the low-intensity expansions in $\la$ which have been considered
previously in the literature dating back to \cite{HZ77}; see \cite{QT96},
\cite[p.242]{Tor02} and references therein.
	\end{enumerate}
\end{remark}
For the proof of Theorem \ref{t:alpha}, we introduce the following notation.
Given $\bx = (x_1,\ldots,x_k) \in (\R^d)^k$, recalling 
$B_r(\bx):= \cup_{i=1}^k B_r(x_i)$, we define 
\bea
V(\bx):= \vol(B_1(\bx));
~~~~~~
V'(\bx)  :=\vol( B_1(\bx) \setminus B_1(o));
~~~~~~
\lvvvert \bx \rvvvert := \max_{1 \leq i \leq k} \|x_i\|.
\label{e:Vpr}
\eea
The proof of Theorem \ref{t:alpha} will use the following 
geometrical lemma, which begins to show the 
relevance of $g(\bz)$, defined above at (\ref{e:gz}) 
as the quasi-gravitational energy
of $D(\bz)$.

\begin{lemma}
	\label{l:QGE}
	For $\bz \in (\R^d)^k$, and $r>0$, set $g_r(\bz):= r^{-1}
	V'(r\bz)$. There exists a constant $c=c(d)\in (0,\infty)$
	such that for all $r \in (0,1]$ and 
	$\bz \in (\R^d)^k$ with $r \lvvvert \bz \rvvvert \leq 1$,
\bea
		 |g_r(\bz) -  g(\bz)| \leq c r \lvvvert \bz \rvvvert^2 . 
\label{e:gdef}
\eea
\end{lemma}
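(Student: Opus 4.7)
The plan is to compute both $g(\bz)$ and $g_r(\bz)=V'(r\bz)/r$ by integrating in polar coordinates $x=tu$ ($t>0$, $u\in S^{d-1}$) centred at the origin, and then compare them via Taylor expansion of a square-root term that arises when intersecting a translated unit ball with the exterior of $B_1$.

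First I would express $g(\bz)$ explicitly. Expanding $\|tu-z_i/2\|^2\le \|z_i\|^2/4$ yields $t\le \langle u,z_i\rangle$, so $tu\in B_{\|z_i\|/2}(z_i/2)$ iff $0<t\le \langle u,z_i\rangle^+$. Hence $tu\in D(\bz)$ iff $0<t\le M(u)$ where $M(u):=\max_{i\in[k]}\langle u,z_i\rangle^+$. Since $\|x\|^{1-d}=t^{1-d}$ and the Jacobian is $t^{d-1}$,
\[
g(\bz)=\int_{S^{d-1}}M(u)\,d\sigma(u).
\]

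Next I would do the analogous computation for $V'(r\bz)$. Writing $y=(1+s)u$ with $s>0$, the condition $y\in B_1(rz_i)$ becomes a quadratic inequality in $s$ whose intersection with $(0,\infty)$ is an interval $(0,\rho_i^*(u,r)]$ (possibly empty), with
\[
\rho_i^*(u,r)=r\beta_i-1+\sqrt{1-r^2\gamma_i},\qquad \beta_i=\langle u,z_i\rangle,\quad \gamma_i=\|z_i\|^2-\beta_i^2.
\]
Setting $\rho^*(u,r)=\max_i(\rho_i^*(u,r))^+$ and writing the volume element $t^{d-1}dt\,d\sigma$ in the form $(1+s)^{d-1}ds\,d\sigma$, I obtain
\[
V'(r\bz)=\int_{S^{d-1}}\frac{(1+\rho^*(u,r))^d-1}{d}\,d\sigma(u).
\]

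The key comparison step: Taylor expanding $\sqrt{1-r^2\gamma_i}$ gives $\rho_i^*(u,r)/r=\beta_i-r\gamma_i/2+O(r^3\gamma_i^2)$, and one has the easy bound $\rho_i^*(u,r)/r\le \beta_i^+$. Taking the maximum over $i$ and using $\gamma_i\le\|z_i\|^2\le\lvvvert\bz\rvvvert^2$, together with the elementary consideration that when $M(u)\le r\lvvvert\bz\rvvvert^2/2$ both $M(u)$ and $\rho^*(u,r)/r$ are already of size $O(r\lvvvert\bz\rvvvert^2)$, I would show
\[
0\le M(u)-\rho^*(u,r)/r\le c_1 r\lvvvert\bz\rvvvert^2
\]
uniformly in $u\in S^{d-1}$ when $r\lvvvert\bz\rvvvert\le 1$. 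Plugging this into the expansion $(1+\rho^*)^d-1=d\rho^*+O((\rho^*)^2)$, using $\rho^*\le r\lvvvert\bz\rvvvert$ to bound the remainder contribution by $O(r^2\lvvvert\bz\rvvvert^2)$, and dividing by $r$, I arrive at $|g_r(\bz)-g(\bz)|\le c r\lvvvert\bz\rvvvert^2$. The main obstacle is the bookkeeping in the transition regime $M(u)\approx r\lvvvert\bz\rvvvert^2$, where the leading linearization becomes subdominant and one must fall back on crude bounds; but this regime contributes at most $O(r\lvvvert\bz\rvvvert^2)$ to the integral on $S^{d-1}$, so it is absorbed into the final constant $c=c(d)$.
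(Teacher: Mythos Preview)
Your proof is correct and follows essentially the same route as the paper's: both integrate over the unit sphere, compute for each direction the radial extent of $\cup_i B_1(rz_i)\setminus B_1$, and compare it to the limiting quantity $\max_i\langle u,z_i\rangle^+$ with error $O(r\lvvvert\bz\rvvvert^2)$; the final step $(1+\rho^*)^d-1=d\rho^*+O((\rho^*)^2)$ is identical in both. The only difference is packaging: the paper computes $s(x,rz_i)$ via the cosine rule and an auxiliary angle $\beta$, obtaining $s(x,rz)\approx r\|z\|\cos\alpha(x,z)$, and then invokes Thales' theorem to identify $\int_{\partial B_1}\max_i\|z_i\|(\cos\alpha(x,z_i))^+\,d\sigma$ with $g(\bz)$, whereas you get the same quantities directly from the quadratic formula and the observation that $tu\in B_{\|z_i\|/2}(z_i/2)$ iff $t\le\langle u,z_i\rangle^+$. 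Your algebraic route is slightly more streamlined (no separate $d=1$ case needed, no appeal to Thales), but the two arguments are really the same computation in different coordinates. One small comment: your discussion of the ``transition regime'' is more elaborate than necessary, since for the maximizing index $j$ one always has $\beta_j-(\rho_j^*)^+/r\le r\lvvvert\bz\rvvvert^2$ directly (either $\rho_j^*>0$ and the bound follows from $1-\sqrt{1-x}\le x$, or $\rho_j^*\le 0$ forces $\beta_j\le r\|z_j\|^2/2$), so no regime split is needed.
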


\begin{proof}
When $d=1$, it is easy to see directly that
	$g_r(\bz)= \max_i z_i^+ + \max_i z_i^- = g(\bz)$
	whenever $r \lvvvert \bz \rvvvert \leq 1$, and hence that
(\ref{e:gdef}) holds in this case.
Therefore we can and do assume from now on that $d \geq 2$.

Let $r \in (0,1]$ and $ \bz= (z_1,\ldots,z_k) \in (\R^d)^k$ with
$r \lvvvert \bz \rvvvert \leq 1$.  
	Using polar coordinates, write
$$
g_r(\bz) = r^{-1} \int_{\partial B_1} \int_0^\infty  \1 \{(1+s)x \in
\cup_{i=1}^k B_1(rz_i) \} (1+s)^{d-1} ds \sigma(dx),
$$
where the outer integral is a surface integral with the surface measure denoted
$\sigma$. 

Given $x \in \partial B_1$, $y \in B_1$, let 
$ s(x,y)  := (\max\{s: (1+s)x  \in B_1(y) \})^+$. Then 
\begin{align}
	g_r(\bz) & = r^{-1} \int_{\partial B_1} \int_0^{\max_i s(x,rz_i)}
	(1+s)^{d-1} ds
	\sigma(dx)
	\nonumber \\
	\label{e:polar}
	& = \int_{\partial B_1} (rd)^{-1} ( (1+ \max_i s(x,rz_i))^d -1) 
	\sigma(dx). 
\end{align}
By the triangle inequality, $s(x,rz_i) \leq r \|z_i\|$, so
	the integrand in (\ref{e:polar}) is bounded by
	a constant independent of $x$. 
	We shall prove that the limit $\lim_{r \downarrow 0}r^{-1} s(x,rz_i)$
exists, for each $i \in [k]$ and $x \in \partial S$, with
	a rate of convergence. 
 \begin{figure}[!h]
\label{fig0}
\center
\includegraphics[width=8cm]{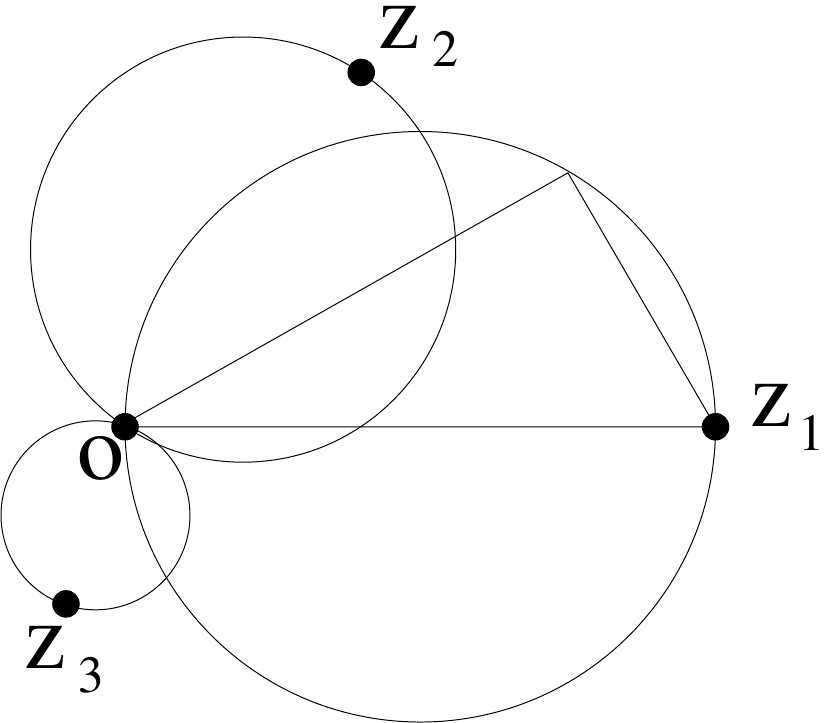}
	 \caption{The union of the disks is the set $D((z_1,z_2,z_3))$.}
\end{figure}

 \begin{figure}[!h]
\label{fig1}
\center
\includegraphics[width=8cm]{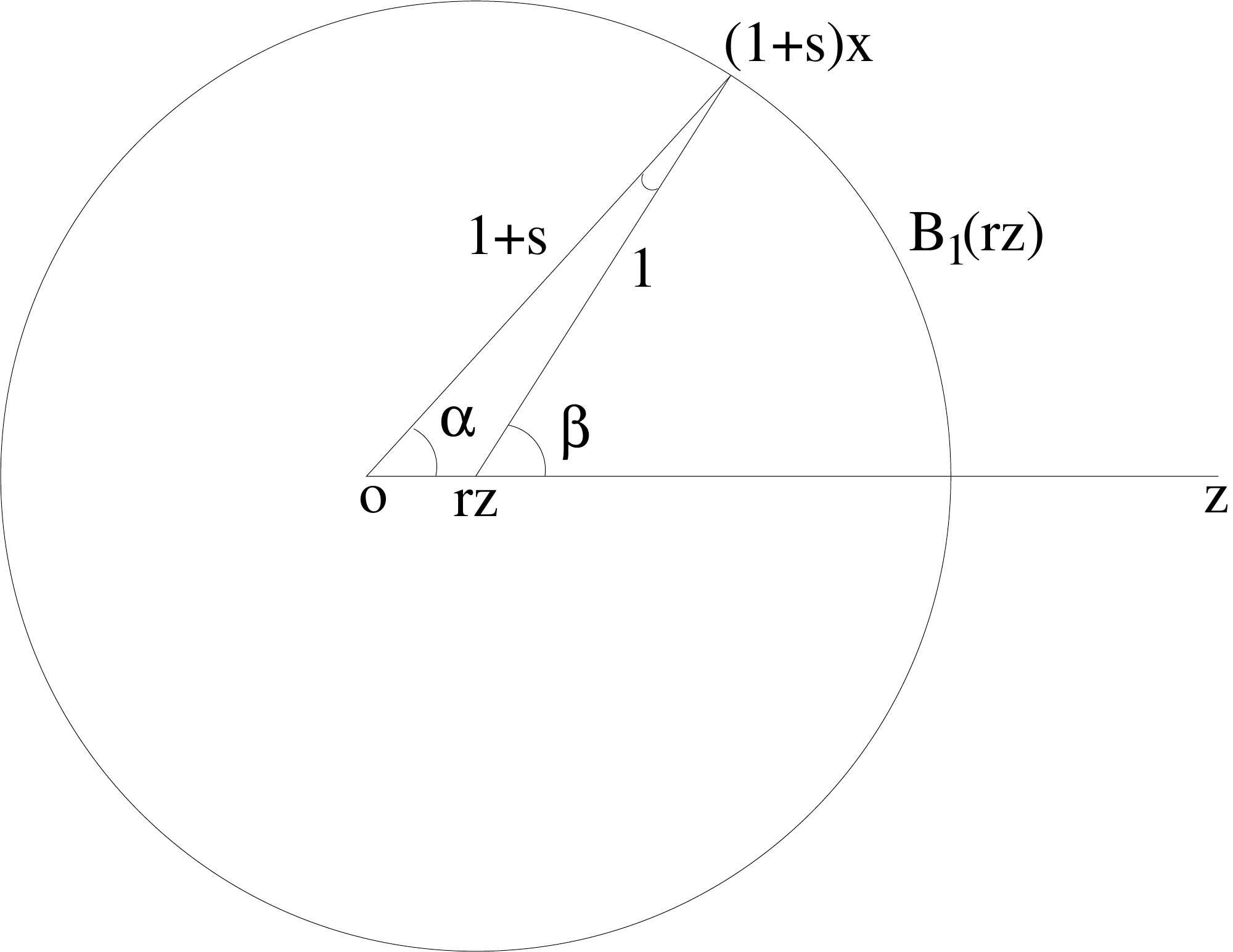}
	 \caption{Illustration of the proof of Lemma \ref{l:QGE}.}
\end{figure}

The reader is invited to refer to Figure 2 now.
	Let $i \in [k]$ and set $z=z_i$. If $z=o$ then
$s(x,rz)=0$.  Given $r \in (0,1]$,   $z \in B_{1/r} \setminus \{o\}$
and $x \in \partial B_1$, let $\alpha = \alpha(x,z)$ be the angle between 
$x$ and $z$, i.e the angle of $(x ,o, z)$, and assume $\alpha < \pi/2$
(otherwise, it is not hard to see that $s(x,rz) =0$).  Set $s= s(x,rz)$.
Let $\beta = \beta(r,x,z) $ be the angle of $((1+s)x, rz,z)$,
i.e. the angle between $(1+s)x -r z$ and $z$.  Then  $\|(1+s) x - rz\| =1$, 
and by the cosine rule,
$$
(1+s)^2 = \|rz +((1+s) x - rz )\| =
(r \|z\|)^2 + 1 + 2 r \|z\| \cos \beta. 
$$
Also $\beta > \alpha$, 
and $\beta-\alpha$ is the most acute angle shown in Figure 2;
in other words,
$\beta - \alpha = \arcsin
(r\|z\| \sin \alpha)$.

If $t = \sin \omega$ with $\omega \in [0,\pi/2]$ then
$t \geq (2/\pi) \omega$, so that $\arcsin(t) = \omega \leq (\pi/2) t$.
Hence
for all $r\in (0,1]$, $z \in B_{1/r} \setminus \{o\}$
and all $x \in \partial B_1$ such that
$\alpha(x,z) \in [0,\pi/2)$, we have $0< \beta(r,x,z)
-\alpha(x,z) \leq (\pi/2) r \|z\|$. 

There is a constant $c_1$ such that for all $t \geq -1$
we have $|(1+t)^{1/2}- 1 -t/2| \leq c_1 t^2$. Hence
there is another constant $c_2$ such that
for all $r \in (0,1]$, $z \in B_{1/r} \setminus \{o\}$
and $x \in \partial B_1$ with $\alpha(x,z) \in (0,\pi/2)$ we have
\begin{align*}
	|s(x,rz)-r\|z\| \cos \alpha(x,z)|
	= |(1 + 2 r\|z\| \cos \beta(r,x,z) + r^2\|z\|^2 )^{1/2} - 
	1 - r \|z\|\cos \alpha(x,z)|
	\\
	\leq |r\|z\|(\cos \beta(r,x,z) - \cos \alpha(x,z)) | + 
	\frac{r^2}{2}\|z\|^2
	+ c_1(r\|z\|\cos \beta(r,x,z) + r^2\|z\|^2)^2
	\\
	\leq c_2 r^2 \|z\|^2. 
	~~~~~~~~~~~~~~~~~~~~~~~~~~~~~~~~~~~~~~~~~
	~~~~~~~~~~~~~~~~~~~~~~~~~~~~~~~~~~~~~~~~~
	~~~~~~
\end{align*}
There is a further constant $c_3$ such that for $t,u \in [0,1]$ we have 
$ |(1+u)^d - 1- dt|\leq c_3 (|u|^2 + |u-t|).  $
Therefore
$$
((1+ \max_i s(x,rz_i))^d -1 - rd \max_i \{\|z_i\| (\cos \alpha(x,z_i))^+ \}
\leq c_3 (\max_i s(x,rz_i)^2 + c_2 r^2 \lvvvert \bz \rvvvert^2).
$$
Hence by (\ref{e:polar}), and the inequality
$s(x,rz_i) \leq r\|z_i\|$, there is a constant $c_4$ such that
\begin{align}
	\left| g_r(\bz) -  
	\int_{\partial B_1}  
	 \max_{i \in [k]} \{ \|z_i\| (\cos \alpha(x,z_i))^+ \} \sigma(dx)
	 \right| \leq c_4  r \lvvvert \bz \rvvvert^2.
	 \label{e:gz1}
\end{align}
Using 
Thales' theorem, one can show that
 $\max_i \|z_i\| (\cos \alpha(x,z_i))^+$ is the distance from
the origin to the furthest point from the origin in the direction of $x$
lying in the set $D(\bz)$, defined at (\ref{e:Dz}) (this
is illustrated in Figure 1; the triangle shown there is right angled.).
Therefore using polar coordinates again, we can verify that
$g(\bz)$, defined at (\ref{e:gz}), equals the integral in
(\ref{e:gz1}).
\end{proof}

	\begin{proof}[Proof of Theorem \ref{t:alpha}]
		Since $\Pr[|\cC(\la)|=1] = e^{-\lambda \theta}$,
		and we set $\alpha_1=1$,
	(\ref{e:Alim}) is immediate for $k=0$.
Therefore we can and do assume from now on that $k \geq 1$.

	Recall the definitions  of $h_r(\cdot)$ and $B_r(\cdot)$
		at (\ref{e:Brhrdef}), and $V(\cdot)$ at (\ref{e:Vpr}).
It is known (see e.g. \cite{Pen91}) that
\begin{align}
p_{k+1}(\la) = \frac{\la^{k}}{k!} \int_{\R^d} \cdots \int_{\R^d}
	h_1((o,x_1,\ldots,x_k)) \exp(- \la V((o,x_1,\ldots,x_k))) 
	dx_k \cdots dx_1.
	\label{e:pk}
\end{align}
This formula is a consequence of the multivariate Mecke formula
		(see e.g. \cite{LP18}); a similar formula to
		(\ref{e:pk}) is derived later on  at 
		(\ref{e:exp_Snk}).

		Since $V((o,x_1,\ldots,x_k)) = V'((x_1,\ldots,x_k)) + \theta$ by
		(\ref{e:Vpr}), we obtain from (\ref{e:pk})  that
		\begin{align}
			p_{k+1}(\lambda) & = \frac{\lambda^k e^{-\la \theta}}{k!} \int_{(\R^d)^k } h_1(o,\bx)
\exp(- \la V'(\bx)) d\bx
\nonumber
\\
			& = \frac{\lambda^{k-kd} e^{-\lambda \theta}}{k!}
			\int_{(\R^d)^k} h_1((o,\lambda^{-1}\bz)) \exp(-\lambda V'(\lambda^{-1} \bz)) d\bz.
			\label{0827a}
		\end{align}
If $\lvvvert \bz \rvvvert \leq \la/2$ then $h_1((o,\la^{-1}\bz)) =
h_\la((o,\bz))=1$, and if $\lvvvert \bz \rvvvert > k \la $ then 
$h_\la((o,\bz))=0$.  Thus provided $\la \geq 4$ so that $\la^{1/2} \leq \la/2$,
	recalling the definition $g_r(\bz):= r^{-1}V'(r\bz)$,
		by (\ref{0827a}) and (\ref{e:alpha}) we have
\begin{align}
	k!| \lambda^{k(d-1)} e^{\lambda \theta} p_{k+1}(\la) - \alpha_{k+1}|
	  \leq  \int_{(B_{\sqrt{\la}})^k} |e^{- g_{1/\la}(\bz)} - e^{- g (\bz)}| d\bz ~~~~~~~~~~~~~~~
			 \nonumber \\
	  + \int_{(B_{\la k})^k \setminus (B_{\sqrt{\la}})^k}
	\exp(- g_{1/\la}(\bz) ) d\bz
			 + \int_{(\R^d)^k \setminus (B_{\sqrt{\la}})^k}
			 e^{-g(\bz)}d\bz. 
			 \label{0915a}
		\end{align}

There is a constant $c = c(d)>0$ such that
$g_r(\bz) \geq c \lvvvert \bz \rvvvert$, 
for all $r>0$ and $\bz \in (\R^d)^k$ with $\lvvvert r \bz\rvvvert \leq k$.
Hence $g_{1/\lambda}(\bz) \geq c \lvvvert \bz \rvvvert$ 
whenever $\lvvvert \bz\rvvvert \leq k \la$, and
		when $\lvvvert \bz \rvvvert > k\la$
		we have $h_1((o,\la^{-1} \bz))=0$.
		Moreover by (\ref{e:gdef}) we also have 
		$g(\bz) = \lim_{r \downarrow 0} g_r(\bz)
		\geq c \lvvvert \bz \rvvvert$ for all $\bz \in
		(\R^d)^k$.
Therefore the second integral in (\ref{0915a}) is bounded by
$
\int_{(B_{\la k})^k \setminus (B_{\sqrt{\la}})^k}
\exp(-c \lvvvert \bz \rvvvert)d\bz,
$
		and hence by $\theta^k (\lambda k)^{dk} \exp(-c \la^{1/2})$.

The third  integral in (\ref{0915a}) is bounded
by $ \int_{(\R^d)^k \setminus (B_{\la^{1/2}}(o))^k}
e^{-c\lvvvert \bz \rvvvert}$, which is $O(\exp (-c' \la^{1/2}))$ as
$\la \to \infty$, for some $c' >0$..

The integrand in the first integral in (\ref{0915a}) can be written as
$ e^{-g(\bz)} |e^{g(\bz)- g_{1/\la}(\bz)} -1|$.  By Lemma \ref{l:QGE}, there
are constants $c'',c'''$ such that for all large $\la$ and  all
$\bz \in (B_{\sqrt{\la}})^k$ we have $|g(\bz) - g_{1/\la}(\bz)| \leq c''
\la^{-1}\lvvvert \bz\rvvvert^2$  
		and thus $|e^{g(\bz)- g_{1/\la}(\bz)} -1 | \leq
		c''' \la^{-1} \lvvvert \bz\rvvvert^2$, so that the first
		integral in (\ref{0915a}) is bounded by
		$$
		c''' \la^{-1} \int_{(\R^d)^k} e^{-g(\bz)} \lvvvert \bz\rvvvert^2
		d\bz,
		$$
	and since $g(\bz)\geq c\lvvvert \bz \rvvvert$ for all $\bz$,
		the last integral is finite and the result is proved.
	\end{proof}

\begin{proof}[Proof of Theorem \ref{t:ptprlim}]
	Let $f:(\R^d)^k \to \R$ be bounded and continuous.
	Write $\bY_{\la}:= (Y_{1,\la},\ldots,Y_{k,\la})$.
	We need to show that 
	$\EE[f(\lambda \bY_{\la})
	|\{ | \cC(\la) | = k+1\}] \to \EE [f(V_1,\ldots,V_k)]$.
	For $(x_1,\ldots,x_k) \in (\R^d)^k$ let
	$f^*((x_1,\ldots,x_k)):= f((x_{(1)},\ldots,x_{(k)}))$,
	where
	$x_{(1)},\ldots,x_{(k)}$ are the
	elements of $\{x_1,\ldots,x_k\}$ taken
	in increasing lexicographic order.
	 Then $f^*$ is a symmetric
	function of $(x_1,\ldots,x_k)$.
	By the multivariate Mecke formula
\begin{align*}
	\E [ f(\lambda \bY_{\la}) \1\{|\cC(\la)|=k+1\} ] 
	& = \frac{\lambda^k}{k!} 
	\int_{(\R^d)^k} f^*(\lambda \bx) h_1((o,\bx)) 
	\exp(-\lambda V((o,\bx))) d\bx
\\
	& = 
	\frac{e^{-\lambda \theta} \lambda^{k(1-d)} }{k!}
	\int_{(\R^d)^k} f^*(\bz) h_1((o,\lambda^{-1} \bz))
	\exp(-g_{1/\lambda}
	(\bz)) d\bz.
\end{align*}
We divide by $\Pr[|\cC(\la)|=k+1]$ and take the $\la \to \infty$ limit.
	By Lemma \ref{l:QGE} the integrand tends to $f^*(\bz) e^{- g(\bz)}$,
	and it is dominated by an integrable function of $\bz$, as in the
	proof of Theorem \ref{t:alpha}. Hence, 
 using (\ref{e:Alim}) and the dominated convergence theorem we obtain that
	\begin{align*}
		\lim_{\la \to \infty} \E[f(\la \bY_\la))|\{|\cC(\la)|=k+1 \}]
		& = (k!\alpha_{k+1})^{-1} \int_{(\R^d)^k} f^*(\bz) \exp(-g(\bz)) d\bz
\\
		& = \EE[f^*((U_1,\ldots,U_k))],
	\end{align*}
	the last line coming from (\ref{e:limPP}) and
	the law of the unconscious statistician.
	Since $f((V_1,\ldots,V_k))=f^*((U_1,\ldots,U_k))$,
	the result is proved.
\end{proof}


\section{First order asymptotics}

 Now we return to the model of finite random geometric graphs, described
in Section \ref{secintro}.
To recall, we let $A \subset \R^d$ be compact with
 $ \vol(A) >0$. If $d \geq 2$ then  assume, for the
 rest of this paper, that $A$ {\em has
a $C^{1,1}$ boundary}
in the sense that for each $x \in \partial A$ there exists
a neighbourhood $U$ of $x$ and a real-valued function $\phi$ that is
defined on an open set
in $\R^{d-1}$ and  differentiable with Lipschitz continuous
partial derivatives,
such that $\partial A \cap U$, such that $\partial A \cap U$, after a rotation,
is the graph of the function $\phi$.
If $d=1$ we assume $A$ is an interval.
This is our smoothness assumption.

Recall that $(X_1,X_2,\ldots)$ is a sequence
of independent random $d$-vectors, each of which has
distribution, denoted $\nu$, with density
function $f$ with support $A$ and with $0< f_0 \leq \fmax <\infty$.
Recall the point processes $\X_n:= \{X_1,\ldots,X_n\}$
and $\Po_{n}:= \{X_1,\ldots,\X_{Z_n}\}$, where
$Z_n$ is Poisson with parameter $n$ (in the second case
$n$ need not be an integer).
Then $\X_n$ is a binomial point process, and $\Po_n$ is
a Poisson point process in $A$ with $\E[\Po_n(dx)]= n f(x)dx$.
In the {\em uniform case} we take $f = f_0 \1_A$,
with $f_0= 1/\vol(A)$.
For $n>0$ let $r=r_n= r(n)>0$ be given, satisfying
(\ref{e:supcri}) and (\ref{e:sublog2}), which we re-state here:
\begin{align}
	\lim_{n \to \infty} nr_n^d = \infty; ~~~~~~~~~~~~
	\limsup_{n \to \infty} n \theta r_n^d/(\log n) <
	\frac{1}{\max(f_0, d (f_0 - f_1/2) ) }.
	\label{e:supcri3}
\end{align}
{\em We shall continue to assume (\ref{e:supcri3}) throughout the
rest of this paper, except for Section \ref{s:sparse}}.

Let $k \in \N$, and recall that $S'_{n,k}$ denotes the 
number of components of $G(\Po_n,r)$ of order $k$
(again, $n$ need not be an integer here).

We give a formula for $I_{n,k} := \E[S'_{n,k}]$.
With $h_r(\cdot)$ and $B_r(\cdot)$ defined at (\ref{e:Brhrdef}),
\begin{align}\label{e:def_Snk}
	S'_{n,k} = \sum_{\ph\subset \Po_n, |\ph|=k} h_r(\ph) 
	\1\{ (\Po_n\setminus \ph) \cap B_r(\ph) 
	= \emptyset \}. 
\end{align}
Hence by the multivariate Mecke formula, 
\begin{align}
	\label{e:exp_Snk}
	I_{n,k}= 
	\EE[S'_{n,k}]= \frac{n ^k}{k!} \int_{A^k} h_r(\bx)
	\exp(- n \nu( B_r(\bx )))
	\nu^k(d\bx). 
\end{align}

It will be useful to have a lower bound on this expectation.

\begin{lemma}
        \label{l:Ilower}
        Let $f_0^+$ be any constant with $f_0^+ >f_0$. Then
        as $n \to \infty$,
        \bea
	n \exp(- \theta f_0^+ nr^d) = o( I_{n,k}).
        \label{e:Iklower}
        \eea
	In particular, $I_{n,k} \to \infty$ as $n \to \infty$.
\end{lemma}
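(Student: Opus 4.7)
The plan is to obtain \eqref{e:Iklower} by deriving a lower bound on $I_{n,k}$ through restricting the multiple integral in \eqref{e:exp_Snk} to a carefully chosen region of $A^k$ on which $f$ stays close to its infimum $f_0$. By continuity of $f$ on the compact set $A$ and the fact that $A^o$ is dense in $A$ (since $A$ has smooth boundary), one can, for any $\epsilon > 0$, select $x_0 \in A^o$ with $f(x_0) < f_0 + \epsilon$. Another appeal to continuity furnishes a radius $\rho > 0$ with $B_\rho(x_0) \subset A^o$ and $f(y) \leq f_0 + 2\epsilon$ for every $y \in B_\rho(x_0)$.

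Next, fix a small $\eta \in (0,1)$ and restrict the integral defining $I_{n,k}$ to configurations $\bx = (x_1,\ldots,x_k)$ with $x_1 \in B_{\rho/2}(x_0)$ and $x_i \in B_{\eta r}(x_1)$ for $i = 2,\ldots,k$; this subregion lies inside $A^k$ for all sufficiently large $n$ (since $r = r_n \to 0$). On it, $h_r(\bx) = 1$ (the configuration forms a star centred at $x_1$ with edge lengths at most $\eta r < r$), and $B_r(\bx) \subset B_{(1+\eta)r}(x_1) \subset B_\rho(x_0)$, so that
\[
 \nu(B_r(\bx)) \;\leq\; (f_0 + 2\epsilon)\,\theta (1+\eta)^d r^d.
\]
Choose $\epsilon, \eta$ small enough that $(f_0 + 2\epsilon)(1+\eta)^d < f_0^+$ — possible because $f_0^+ > f_0$ — and set $\eta_0 := f_0^+ - (f_0 + 2\epsilon)(1+\eta)^d > 0$. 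Using $f \geq f_0$ pointwise and performing the change of variables $y_i = (x_i - x_1)/r$ for $i \ge 2$, whose Jacobian contributes $r^{d(k-1)}$, one obtains a bound of the form
\[
 I_{n,k} \;\geq\; c_0 \, n(nr^d)^{k-1} \exp\bigl(- (f_0^+ - \eta_0)\,\theta n r^d\bigr),
\]
where $c_0 = (f_0^k / k!)\, \vol(B_{\rho/2}(x_0))\, \vol(B_\eta(o))^{k-1} > 0$.

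Dividing the target quantity by this lower bound yields
\[
 \frac{n\exp(-\theta f_0^+ n r^d)}{I_{n,k}} \;\leq\; \frac{1}{c_0\,(nr^d)^{k-1}}\, \exp\bigl(-\eta_0\, \theta n r^d\bigr),
\]
which tends to $0$ by \eqref{e:supcri}, proving \eqref{e:Iklower}. The subsidiary claim $I_{n,k} \to \infty$ follows by choosing any particular $f_0^+ \in (f_0, 1/\beta)$, where $\beta := \limsup_{n\to\infty}(n\theta r^d)/\log n$; hypothesis \eqref{e:sublog2} ensures $\beta < 1/f_0$, and this choice gives $n\exp(-\theta f_0^+ nr^d) \geq n^{1 - \beta' f_0^+} \to \infty$ for some $\beta' \in (\beta, 1/f_0^+)$, whence $I_{n,k} \to \infty$. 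There is no genuine obstacle in this argument; the only mild subtlety is ensuring $x_0$ can be found in $A^o$ (rather than only on $\partial A$), which rests on the smoothness assumption on $\partial A$ and the continuity of $f$.
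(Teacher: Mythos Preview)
Your proof is correct and follows essentially the same approach as the paper: both restrict the integral \eqref{e:exp_Snk} to configurations clustered in a small ball $B_{\eta r}(x_1)$ with $x_1$ near an interior point $x_0$ where $f$ is close to $f_0$, bound $\nu(B_r(\bx))$ above via $B_r(\bx)\subset B_{(1+\eta)r}(x_1)$, and deduce a lower bound of the form $c_0\,n(nr^d)^{k-1}\exp(-(f_0^+-\eta_0)\theta nr^d)$. The argument for $I_{n,k}\to\infty$ is likewise the same, using that \eqref{e:sublog2} forces $\limsup(n\theta r^d/\log n)<1/f_0$.
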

\begin{proof}
Let $f_0 < \alpha < \alpha' < f_0^+$.  By the definition of
$f_0$ and the assumption that $f$ is continuous on $A$, we can (and do)
choose $x_0 \in A^o$ with $f(x_0) < \alpha$ and $f$ continuous at $x_0$. 
        Choose $r_0 >0$ such that $B(x_0,2r_0) \subset A$ and
        $f(y)  \leq \alpha$ for all $y \in B(x_0,2r_0)$.
	Let $\eps \in (0,1/2)$ with $\alpha(1+\eps)^d < \alpha'$.
	If $2r \leq r_0$, and $\bx =(x_1,\ldots,x_k) \in A^k$ with
	$x_1 \in B(x_0,r_0)$, and  $\{x_1,\ldots,x_k\} \subset B(x_1,\eps r)$
	then $B_r(\bx) \subset B_{r(1+\eps)}(x_1)$ and
	$\nu(B_r (\bx)) \leq \alpha \theta (1+ \eps)^d r^d
	< \alpha' \theta r^d$; also $h_r(\bx)=1$.
	Hence by (\ref{e:exp_Snk}), for all large enough $n$,
	\begin{align*}
		I_{n,k} & \geq \frac{n^k}{k!} \int_{B(x_0,r_0)}
		\int_{B(x_1,\eps r)^{k-1}}
		\exp(-n \alpha' \theta r^d)  \nu^{k-1}
	(d(x_2,\ldots,x_k)) \nu(dx_1)
		\\ &
		\geq n  (f_0^k \theta^k r_0^d/k!) (n (\eps r)^d)^{k-1}
		 \exp(- n \alpha' \theta r^d),
	\end{align*}
	so that $(ne^{-n\theta f_0^+r^d})/I_{n,k} $ tends to zero by
	the first part of \eqref{e:supcri3}, and hence (\ref{e:Iklower}).

	Since the assumption (\ref{e:supcri3}) implies $\limsup_{n \to \infty}
	((\theta f_0 n r_n^d)/\log n) <1$, (\ref{e:Iklower})
	implies $I_{n,k} \to \infty$ as $n \to \infty$.
\end{proof}

We next determine the asymptotics for $\E[S_{n,k}]$.
\begin{proposition}[Mean $k$-cluster count in binomial process]
	\label{p:Snmeanbin}
There exists $c>0$ such that as $n \to \infty$,
	 with $I_{n,k}$ defined at (\ref{e:exp_Snk}), 
	 we have
	\bean
	\EE[S_{n,k}]= 
	I_{n,k}( 1+ O(e^{-c nr^d}) ) .
	 \eean
\end{proposition}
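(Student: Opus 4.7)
The plan is to express $\EE[S_{n,k}]$ via a direct analogue of (\ref{e:exp_Snk}) and then compare integrand by integrand with $I_{n,k}$. Writing $S_{n,k}$ as the sum over unordered $k$-subsets $\varphi$ of $\X_n$ of $h_r(\varphi)\1\{(\X_n\setminus\varphi)\cap B_r(\varphi)=\emptyset\}$ and using independence of the $X_i$ (or the multivariate Mecke formula for binomial processes) yields
\begin{align*}
\EE[S_{n,k}] = \binom{n}{k}\int_{A^k} h_r(\bx)\bigl(1-\nu(B_r(\bx))\bigr)^{n-k}\nu^k(d\bx).
\end{align*}
Compared with (\ref{e:exp_Snk}), the prefactor satisfies $\binom{n}{k}k!/n^k=\prod_{i=0}^{k-1}(1-i/n) = 1+O(n^{-1})$, while the integrand differs by the factor $(1-p)^{n-k}e^{np}=\exp(\psi_n(p))$, where $p:=\nu(B_r(\bx))$ and
\[
\psi_n(p) = (n-k)\log(1-p) + np = kp - (n-k)\sum_{j\ge 2} \frac{p^j}{j}.
\]

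Next I would bound $\psi_n(p)$ uniformly on the support of $h_r$. Whenever $h_r(\bx)=1$, the set $\{x_1,\dots,x_k\}$ has graph-theoretic diameter at most $(k-1)r$, so $B_r(\bx)$ sits inside a ball of radius $kr$, giving $p\le\fmax\theta k^d r^d = O(r^d)$ uniformly. Combining this with (\ref{e:supcri3}), which forces $r^d\to 0$ and $nr^{2d}=O((\log n)^2/n)\to 0$, yields $|\psi_n(p)|\le kp + (n-k)p^2 = O(r^d+nr^{2d})=O(nr^{2d})$ (the last equality since $nr^d\to\infty$ absorbs the $r^d$ term). In particular $\psi_n(p)\to 0$ uniformly, so $|e^{\psi_n(p)}-1|\le 2|\psi_n(p)| = O(nr^{2d})$ on the support of $h_r$ for all large $n$. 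Multiplying through,
\[
\frac{\EE[S_{n,k}]}{I_{n,k}} = \bigl(1+O(n^{-1})\bigr)\bigl(1+O(nr^{2d})\bigr) = 1 + O\!\bigl(nr^{2d}+n^{-1}\bigr).
\]

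Finally, to convert this polynomial-in-$n$ estimate into the claimed form $O(e^{-cnr^d})$, I would exploit the strict upper bound in (\ref{e:supcri3}): fix $B>0$ with $nr^d\le B\log n$ for all large $n$, and choose any $c\in(0,1/B)$. Then $e^{-cnr^d}\ge n^{-cB}$, and since $cB<1$, $n^{-cB}$ dominates both $n^{-1}$ and $nr^{2d}\le B^2(\log n)^2/n$ for large $n$, which finishes the argument. The main obstacle is precisely this conversion step: Taylor-expanding $\log(1-p)$ only gives an error polynomial in $n^{-1}$, and it is only the strict inequality on $nr^d/\log n$ in (\ref{e:supcri3}) that leaves enough slack to upgrade that error to exponential decay in $nr^d$. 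Everything else in the proof is routine bookkeeping.
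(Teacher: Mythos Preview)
Your proof is correct and follows essentially the same route as the paper: write $\EE[S_{n,k}]$ as the binomial integral, compare the integrand $(1-p)^{n-k}$ with $e^{-np}$ via a second-order Taylor bound (giving an $O(nr^{2d})$ error since $p=O(r^d)$ on the support of $h_r$), handle the prefactor discrepancy $\binom{n}{k}k!/n^k=1+O(n^{-1})$, and finally use the upper bound in (\ref{e:supcri3}) to convert $O(n^{-1}+nr^{2d})$ into $O(e^{-cnr^d})$. One small streamlining relative to the paper: by bounding the \emph{ratio} of integrands uniformly you obtain $\EE[S_{n,k}]/I_{n,k}=1+O(nr^{2d})$ directly, whereas the paper bounds the absolute difference $|\EE[S_{n,k}]-I_{n,k}|$ and then needs the lower bound on $I_{n,k}$ from Lemma~\ref{l:Ilower} to divide through.
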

\begin{proof}
	 Observe that
	 \bea
	 \EE[S_{n,k}] = \binom{n}{k} \int_{A^k}
		 h_r(\bx) (1- \nu(B_r(\bx)))^{n-k} \nu^k(d \bx).
		 \label{e:ESn}
	 \eea
 We compare this with (\ref{e:exp_Snk}).
		 Let $\bx \in A^k$ and set $p= \nu(B_r(\bx))$. Then
 \begin{align*}
		 |e^{-np} - (1-p)^{n-k}| \leq e^{-np}|1- (e^p (1-p))^n|
			 + (1-p)^{n-k}|(1-p)^k -1|.
	 \end{align*}
	 Now
	$e^p(1-p) \leq e^p e^{-p} = 1$, and
 moreover $e^p(1-p) \geq (1+p)(1-p) = 1-p^2$.  Therefore
$$
		 0 \leq e^{-np}(1- (e^p (1-p))^n) \leq e^{-np}(1-
		 (1- p^2)^n) \leq np^2 e^{-np}.
$$
	Moreover $1-(1-p)^k = O(r^d)$.
Thus $|e^{-np}-(1-p)^{n-k}|=O(nr^{2d})$,
 uniformly over $\bx \in A^k$.
	Also $\int_{A^k} h_r(\bx) \nu^k(d\bx) = O(r^{d(k-1)})$.
 Therefore 
	 \begin{align*}
		 \binom{n}{k}	 \int_{A^k}h_r(\bx)
		 |e^{-n \nu(B_r(\bx))}- (1- \nu(B_r(\bx)))^{n-k}|
		 \nu^k(d\bx)
		 = O(n^k r^{d(k-1)}(nr^{2d})),
	 \end{align*}
	 which is $O((\log n)^{k+1} )$
	 by (\ref{e:supcri3}).
	 By Lemma \ref{l:Ilower} and (\ref{e:supcri3}) there exists
	 $\delta >0$ such that $I_{n,k} = \Omega(n^{2 \delta})$. Hence
	 by (\ref{e:supcri3}) again there exists $c>0$ such
	 that $(\log n)^{k+1}/I_{n,k} = O(n^{-\delta}) = O(e^{-c nr^d})$.
	 Thus
	 the last display is
	 $O(e^{-c n r^d}I_n)$ for some $c>0$. Moreover
	 \begin{align*}
		 \left| \frac{n^k}{k!} - \binom{n}{k} \right|
		 \int_{A^k}
		 h_r(\bx) e^{-n \nu(B_r(\bx))} \nu^k(d \bx)
		 = O(n^{-1} I_{n,k}),
	 \end{align*}
	 which is $O(e^{-c nr^d} I_{n,k})$ for some other $c>0$. Combining
	 these estimates, and using (\ref{e:ESn}) and (\ref{e:exp_Snk}),
	 we obtain that $|\E[S_{n,k}] - I_{n,k}| = O( e^{-cnr^d}I_{n,k})$ 
	 for some $c>0$, as required.
	 \end{proof}

We now verify the asymptotic expression (\ref{e:Inasymp})  for $I_{n,k}$
in the uniform case,
with a bound on the 
rate of convergence.
 \begin{theorem} [Asymptotic for $I_{n,k}$ in the uniform case]
	 \label{t:Ilim}
	 Let $k \in \N$. In the uniform case,
\bea
	 n^{-1} (f_0 n r^d)^{(k-1)(d-1)} e^{f_0 \theta n r^d}
I_{n,k} = k^{-1} \alpha_k
	 + O( (nr^d)^{-1})~~~~ {\rm as} ~~ n \to \infty.
\label{e:Ilim}
\eea
 \end{theorem}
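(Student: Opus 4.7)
My plan is to split the integral in (\ref{e:exp_Snk}) according to whether the leftmost point of the putative $k$-cluster lies in the interior region $A^{(kr)}$ or in the boundary collar $A\setminus A^{(kr)}$, and to reduce the interior contribution to the scaled integral already analysed in the proof of Theorem \ref{t:alpha}. In the uniform case $\nu(dx) = f_0\,dx$, and the symmetry of $h_r$ lets me rewrite (\ref{e:exp_Snk}) as $1/(k-1)!$ times an integral over $A^k$ with one distinguished coordinate $x_1$. When $x_1 \in A^{(kr)}$, the connectedness constraint forces all other coordinates of $\bx$ to lie in $B_{kr}(x_1)\subset A$, so $B_r(\bx)\subset A$, $\vol(B_r(\bx)\cap A) = \vol(B_r(\bx))$, and the inner domain may be replaced by $(\R^d)^{k-1}$. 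Changing variables $y_j = r^{-1}(x_j - x_1)$, using $V((o,\by)) = V'(\by)+\theta$, and then rescaling $\bz = \mu \by$ with $\mu := f_0 n r^d$, the inner integral becomes
\begin{align*}
\mu^{-d(k-1)}\int_{(\R^d)^{k-1}} h_1((o,\mu^{-1}\bz))\,e^{-g_{1/\mu}(\bz)}\,d\bz,
\end{align*}
which is exactly the object (with $k-1$ in place of $k$ and $\mu$ in place of $\la$) estimated in the argument leading from (\ref{0827a}) to (\ref{0915a}); that argument, combined with Theorem \ref{t:alpha}, yields $(k-1)!(\alpha_k + O(\mu^{-1}))$.

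Assembling factors, using $r^{d(k-1)}\mu^{-d(k-1)} = (f_0 n)^{-d(k-1)}$, $f_0 \vol(A) = 1$, and $\vol(A^{(kr)}) = \vol(A) - O(r)$ (from smoothness of $\partial A$), the contribution from $x_1\in A^{(kr)}$ evaluates to
\begin{align*}
n(f_0 n r^d)^{-(k-1)(d-1)} e^{-f_0\theta n r^d}\bigl(k^{-1}\alpha_k + O(\mu^{-1}) + O(r)\bigr).
\end{align*}
The assumption (\ref{e:supcri3}) forces $nr^{d+1} = (nr^d)\cdot r = o(1)$, in particular $r = o(\mu^{-1})$, so the $O(r)$ term is absorbed into the target error $O((nr^d)^{-1})$.

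It remains to show the boundary collar contribution is negligible. For every $x_1\in A$, smoothness of $\partial A$ gives $\vol(B_r(x_1)\cap A) \geq (\theta r^d/2)(1 - O(r))$ (trivially in the interval case $d=1$), hence the same lower bound holds for $\vol(B_r(\bx)\cap A)$. Combined with $\vol(A\setminus A^{(kr)}) = O(r)$ and the standard bound $\int_{(\R^d)^{k-1}} h_r((o,x_2,\ldots,x_k))\,dx_2\cdots dx_k = O(r^{d(k-1)})$, the contribution from $x_1\in A\setminus A^{(kr)}$ is $O(n^k r^{d(k-1)+1} e^{-f_0\theta n r^d/2})$, whose ratio to the main term is $O\bigl(r\,(nr^d)^{d(k-1)} e^{f_0\theta n r^d/2}\bigr)$. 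Under the uniform version of (\ref{e:supcri3}) (namely $\limsup(\theta n r^d/\log n) < 2/(df_0)$ for $d\geq 2$, and $<1/f_0$ for $d=1$), this ratio is $o((nr^d)^{-1})$, which completes the proof. The main technical point is precisely this last estimate: the upper bound in (\ref{e:sublog}) is calibrated so that the exponential saving $e^{-f_0\theta n r^d/2}$ coming from the halved volume of a ball centred near $\partial A$ comfortably beats the polynomial factor $r^{d(k-1)+1}$; this is the only place in the proof where the specific form of the upper bound in (\ref{e:sublog}) is really used.
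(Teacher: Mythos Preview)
Your proof is correct and follows essentially the same route as the paper. The paper phrases the interior computation slightly differently, writing $I_{n,k} = \frac{n}{k}\int_A \Pr[|\cC_r(x,\Po_n^x)|=k]\,f_0\,dx$ via the univariate Mecke formula and then recognizing $\Pr[|\cC_r(x,\Po_n^x)|=k] = p_k(f_0 n r^d)$ for $x\in A^{(kr)}$ before invoking Theorem~\ref{t:alpha}; you instead unpack this into the explicit integral and redo the change of variables from (\ref{0827a}). The boundary estimate is identical in spirit, using Lemma~\ref{l:bdy_est} for both the half-ball volume lower bound and $\vol(A\setminus A^{(kr)})=O(r)$. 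One small slip: the identity you state, $r^{d(k-1)}\mu^{-d(k-1)} = (f_0 n)^{-d(k-1)}$, is not correct as written (it misses a factor of $r^{-d(k-1)(d-1)}$); what you need, and what makes your assembled expression come out right, is $n^{k-1}f_0^{k-1}\cdot r^{d(k-1)}\mu^{-d(k-1)} = \mu^{-(k-1)(d-1)}$.
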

 Before proving this, we give  two geometrical lemmas that we shall
 use repeatedly later on to deal with boundary effects.

 \begin{definition}[Sphere condition]
        For $z \in \partial A$ let $\hat n_z$ be the unit normal to $\partial A$ at $z$ pointing inside $A$.

        Given $\tau \geq 0$,
        let us say $\tau $ satisfies the {\em sphere condition}
         for $A$ if, for all $x \in \partial A$,
        we have $B(x+ \tau \hat n_x , \tau) \subset A$
        and  $B(x -\tau \hat n_x, \tau) \cap A = \{x\}$.

Let $\tau(A)$ denote the supremum of the set of all $\tau$ satisfying
the sphere condition for $A$.
\end{definition}
\begin{lemma}[Sphere condition lemma]
        $\tau(A) >0$; that is,
        there exists a constant $\tau >0$ such that
        $\tau$ satisfies the sphere condition
         for $A$.
\end{lemma}
\begin{proof}
        See     \cite[Lemma 7]{sphere-condition-lp}.
\end{proof}
\begin{remark}
        {\rm
        (i) If $0 < \tau < \tau'$ and $\tau'$ satisfies the sphere condition
 for $A$, then so does  $\tau$.

        (ii) If $x \in \R^d$ with
        $\dist(x ,\partial A) < \tau(A)$,
        then $x$ has a unique closest point in $\partial A$.
        }
\end{remark}

 \begin{lemma}
	 \label{l:bdy_est}
Writing $\kappa(\partial A,r)$ for the number
 of balls of radius $r$ required to cover $\partial A$, we have
\bea
\limsup_{s \downarrow 0} s^{d-1}\kappa(\partial A,s) < \infty.
\label{e:covering}
\eea
	 Moreover
 \bea
 \label{e:volLB}
\liminf_{s \downarrow 0} ( s^{-d} \inf_{x \in A} \vol(A \cap B_s(x)) )
	 \geq \theta /2.
\eea
	 Finally, as $s \downarrow 0$ we have
	 $\limsup(s^{-1}\vol(A \setminus A^{(s)})) < \infty$. 
 \end{lemma}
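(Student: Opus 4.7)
The plan is to treat the three assertions in turn using the smooth-boundary hypothesis, the case $d=1$ being immediate in each part since then $\partial A$ has at most two points and $A$ is an interval. Assume henceforth $d\ge 2$. By compactness of $\partial A$ and the smooth-boundary assumption, there exist finitely many neighbourhoods $U_1,\ldots,U_m$ covering $\partial A$ together with $C^2$ functions $\phi_i$ on relatively compact open sets $\Omega_i \subset \R^{d-1}$ such that, after a rotation, $\partial A \cap U_i$ is the graph of $\phi_i$ and $A \cap U_i$ lies on one designated side of that graph. Each $\phi_i$ is Lipschitz with some constant $L_i$, and I set $L:=\max_i L_i < \infty$.

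For \eqref{e:covering}, each $\Omega_i \subset \R^{d-1}$ is bounded, so admits a covering by $O(s^{-(d-1)})$ Euclidean balls of radius $s/\sqrt{1+L^2}$. The graph of $\phi_i$ above such a $(d-1)$-ball has Euclidean diameter at most $s$ and therefore fits inside a single $d$-ball of radius $s$. Summing over the finitely many patches gives $\kappa(\partial A,s) = O(s^{-(d-1)})$.

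For \eqref{e:volLB}, if $B_s(x)\subset A$ then $\vol(A\cap B_s(x)) = \theta s^d$ and there is nothing to prove, so assume $t:=\dist(x,\partial A) < s$ and let $y\in\partial A$ be a nearest boundary point; for $s$ smaller than the reach of $\partial A$, this point is unique and $x=y+t\mathbf n$, where $\mathbf n$ is the inward unit normal to $\partial A$ at $y$. Let $H_y:=\{z:(z-y)\cdot\mathbf n\ge 0\}$ be the inward tangent half-space. Because the Hessians of the $\phi_i$ are uniformly bounded on the closures of the $\Omega_i$, a Taylor expansion of $\phi_i$ about the footpoint of $y$ yields a constant $C$, independent of $y$ and $s$, such that $\vol\bigl((A\triangle H_y)\cap B_{2s}(y)\bigr)\le Cs^{d+1}$. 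Since $t<s$ forces $B_s(x)\subset B_{2s}(y)$, and since $x\in H_y$, an explicit spherical-cap computation gives $\vol(B_s(x)\cap H_y)\ge (\theta/2)s^d$ (with equality at $t=0$). Hence
\begin{align*}
\vol(A\cap B_s(x)) \ge \vol(H_y\cap B_s(x)) - Cs^{d+1} \ge (\theta/2)s^d - Cs^{d+1},
\end{align*}
which yields \eqref{e:volLB} after division by $s^d$ and taking $s\downarrow 0$.

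For the final claim, note $A\setminus A^{(s)}\subset B_s(\partial A)$. By \eqref{e:covering}, $\partial A$ can be covered by $N=O(s^{-(d-1)})$ balls of radius $s$, so $B_s(\partial A)$ is covered by $N$ balls of radius $2s$, of total volume at most $N\theta(2s)^d = O(s)$. The main technical hurdle is the uniform $O(s^{d+1})$ symmetric-difference bound used in part two, which is precisely where the $C^2$ (not merely $C^1$) regularity in the smooth-boundary hypothesis is essential.
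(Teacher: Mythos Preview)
Your proof is correct and more self-contained than the paper's. The paper simply cites \cite[Lemma 5.4]{Pen03} for \eqref{e:covering} and \cite[Lemma 5.7]{Pen03} for \eqref{e:volLB}, after first invoking \cite[Lemma 6.7]{Pen22} to record that $\partial A$ is a $(d-1)$-dimensional $C^2$ submanifold; your argument for the final assertion is then identical to the paper's. By contrast, you supply the underlying arguments for the first two parts directly --- a Lipschitz-graph covering count, and a tangent-half-space comparison with a uniform $O(s^{d+1})$ symmetric-difference bound --- which is essentially what those cited lemmas establish. The trade-off is brevity versus self-containment: the paper's proof is a few lines of citations, while yours lets a reader verify everything in place and makes transparent where the $C^2$ (rather than $C^1$) hypothesis enters.

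One minor slip in your covering argument: with base balls of radius $s/\sqrt{1+L^2}$ in $\R^{d-1}$, the graph piece has Euclidean diameter at most $2s$, not $s$, since the base ball itself has diameter $2s/\sqrt{1+L^2}$ and the Lipschitz bound then gives $\|(u,\phi_i(u))-(v,\phi_i(v))\| \le |u-v|\sqrt{1+L^2} \le 2s$. Halving the base radius fixes this immediately, and the $O(s^{-(d-1)})$ conclusion is unaffected.
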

 \begin{proof} If $d=1$ we are assuming $A$ is a compact interval, and
	 all the assertions of the lemma are clear. 

Suppose $d \geq 2$, and let $x \in \partial A$.
	  By our smoothness assumption,
	 there is a a rotation about $x$ 
	 and a neighbourhood $V$ of $x$
	 such that after rotation
	 the set $(\partial A) \cap V $ equals $ \{(u,\phi(u)): u \in U\}$
	 where $U$ is an open set in $\R^{d-1}$
	 and $\phi: U\to \R$ is continuously differentiable with 
	 Lipschitz derivatives. By a compactness argument,
	 for \eqref{e:covering} it
	 suffices to prove that $\limsup_{s \downarrow 0} s^{d-1} 
	 \kappa((\partial A) \cap V,s) < \infty$.

	 Without loss of generality we can assume that $x=o$ and
	 our roatition is the identity map, and that $\nabla \phi(x)
	 =0$. Also  since $\phi$ is continuously differentiable
	 we can also assume
	 (by taking a smaller neighourhood $V$ if needed)
	 that $U = B(o,\delta)$ for some $\delta >0$ and that
	 $|\langle \nabla \phi(u) ,e\rangle| \leq 1$ for all
	 $u \in U$ and all
	 unit vectors $e \in \R^{d-1}$.
	 Then by the Intermediate Value theorem, for
	 all $u,v \in U$ we have for some $w \in [u,v]$
	 that
	 $$|\phi(v) -\phi(u)| = | \langle v-u, \nabla \phi(w) \rangle | \leq
	 \|v-u\|.
	 $$
	 Given $s >0$, we can and do cover $U$ by $(d-1)$-dimensional
	 balls $B_1,\ldots,B_{m(s)}$  of radius $s/2$ centred in $U$, with 
	 the centre of $B_i$ denoted $u_i$ for each $i$, and
	 and with $m(s) = O(s^{1-d})$ as $s \downarrow 0$. Then for
	 $i \leq m(s)$ and $v \in B_i$,
	 $$
	 \|(v,\phi(v) ) - (u_i,\phi(u_i)) \| \leq \|v-u_i\| + \|\phi(v)
	 -\phi(u_i)\| \leq (s/2) + (s/2) =s.
	 $$
	 Therefore the balls $B((u_i,\phi(u_i)),s)$, $1 \leq i \leq m(i)$,
	 cover $\partial A \cap V$, and this gives us \eqref{e:covering}. 

	 For \eqref{e:volLB}, we use
	 \cite[Lemma 3.4]{HPY25}.
	 By that result,
	 for $0 < s < \tau(A)$ and $x \in A \setminus A^{(s)}$,  
	 $$
                \left|
		\vol( A \cap B_s(x) ) - ((\theta_d/2) + h(\dist(x,\partial A)/s))s^d
                \right|
                \leq
		\frac{2 \theta_{d-1} s^{d+1}}{\tau(A)},
		$$
		where $h(a):= \vol(B_1(o) \cap ([0,a] \times \R^{d-1}).$
		In particular $h(a) \geq 0$ so that 
		$$
		s^{-d}\vol(A \cap B_s(x)) \geq
		(\theta_d/2) - 2 \theta_{d-1} s/\tau(A),
		$$
		and \eqref{e:volLB} follows.



	 For the last assertion, given $s>0$, set $\kappa_s:=
	 \kappa(\partial A,s)$. By definition of $\kappa_s$,
	 we can find $x_1,\ldots,x_{\kappa_s} \in \R^d$ such that
	 $\partial A \subset \cup_{i=1}^{\kappa_s} B(x_i,s)$.
	 Then $A \setminus A^{(s)} \subset \cup_{i=1}^{\kappa_s} B(x_i,2s)$.
	 Also $\kappa_s = O(s^{1-d})$ as $s \downarrow 0$,
	 by (\ref{e:covering}). 
	 Hence by the union bound,  
	 $$
	 \vol(A \setminus A^{(s)} ) \leq \sum_{i=1}^{\kappa_s}
	 \vol(B(x_i,2s)) 
	 = O( s),
	 $$
	 which is the last assertion.
 \end{proof}

\begin{proof}[Proof of  Theorem \ref{t:Ilim}]
	Assume $\nu$ is uniform on $A$.
	By definition $I_{n,k} = \E[S'_{n,k}]$. Hence,
	by the (univariate) Mecke formula,
	writing
	$\Po_n^x$ for $\Po_n \cup \{x\}$,
	we have
\begin{align}
	I_{n,k} = \frac{n}{k} \int_A \Pr[|\cC_{r}(x,\Po_n^x)| =k] f_0 dx,
	\label{e:Ink}
\end{align}
	By the multivariate Mecke formula, setting  
	 $V_r(\bx):= \vol( B_r(\bx))$ for $\bx \in (\R^d)^k$,
	for $x \in A^{(kr)}$ we have 
$$
\Pr[|\cC_{r}(x,\Po_n^x) |=k ] = \frac{(nf_0)^{k-1}}{(k-1)!} 
\int_{(\R^d)^{k-1}}
	h_r((x,\bx)) \exp(-n f_0 V_r((x,\bx))) d \bx,
$$
	where we used the fact that if $x \in A^{(kr)}$
	and $h_r((x,\bx)) =1$ then $B_r((x,\bx)) \subset A$.
Hence by translation invariance, for  $x \in A^{(kr)}$, writing 
	$h_r(o,\bx)$ for $h_r((o,\bx))$ and
	$V_r(o,\bx)$ for $V_r((o,\bx))$ we have
\begin{align*}
	\Pr[|\cC_{r}(x,\Po_n^x)| =k ] & = 
	\frac{(nf_0)^{k-1}}{(k-1)!} 
	\int_{(\R^d)^{k-1}}
h_r(o,\bx) \exp(-n f_0 V_r(o,\bx))
d \bx
\\
	& = \frac{(nf_0 r^d)^{k-1}}{(k-1)!} 
	\int_{(\R^d)^{k-1}}
h_r(o,r\by) \exp(-n f_0 V_r(o,r \by))
d \by
\\
	& = \frac{(nf_0 r^d)^{k-1}}{(k-1)!} 
	\int_{(\R^d)^{k-1}}
h_1(o,\by) \exp(-n f_0 r^d V_1(o, \by))
	d \by
\end{align*}
and by (\ref{e:pk}), this equals $ p_{k}(f_0n r^d)$.
Therefore using (\ref{e:Alim}) and the last part of Lemma
	\ref{l:bdy_est} we obtain that
as $n \to \infty$,
\begin{align}
\int_{A^{(kr)}} \Pr[|\cC_{r}(x,\Po_n^x)|=k] f_0 dx
	= (1+O(r))
  \alpha_k (f_0nr^d)^{(1-k)(d-1)} e^{-\theta f_0 n r^d }
	(1+ O((nr^d)^{-1})).
	\label{0802b}
\end{align}

	To deal with $x \in A \setminus A^{(kr)}$,
	let $f_0^- \in (0,f_0)$. 
	 	By Lemma \ref{l:bdy_est}
	we have for all large enough $n$ and all $x \in A$
	that $\nu(B_r(x)) \geq  \theta f_0^- r^d/2$. 
	Therefore using the multivariate Mecke formula
	we have for all large enough $n$ and all $x \in A$ that
	$$
	\Pr[|\cC_r(x,\Po_n^x)| =k] 
	\leq
	\frac{(n f_0)^{k-1}}{(k-1)!}
	\int_{A^{k-1}}
	h_r(x,\bx) \exp(-n \theta f_0^- r^d/2)   
	d\bx,
	$$
	which is $O((nr^d)^{k-1} \exp(-n \theta f_0^- r^d/2))$,
	uniformly over $x \in A$.
	Hence by the last part of Lemma
	\ref{l:bdy_est} we obtain that
	$$
\int_{A \setminus A^{(kr)}} \Pr[|\cC_{r}(x,\Po_n^x)|=k] f_0 dx
= O(r (nr^d)^{k-1} \exp(-n \theta f_0^- r^d/2) ),
$$
which is negligible compared to the 
right hand side of (\ref{0802b}) by (\ref{e:supcri3}), provided
we take $f_0^-$ close enough to $f_0$ (here $f_0=f_1$ so
(\ref{e:supcri3}) implies $\limsup_{n \to \infty} (n \theta f_0 r_n^d/\log n)
< 2/d$). Thus
using (\ref{e:Ink}) we obtain (\ref{e:Ilim}). 
\end{proof}

In the non-uniform case, we do not have such precise
asymptotics for $I_{n,k}$. However we do have the following,
which formalises back-of-the-envelope calculations
suggesting $I_{n,k} \approx n \exp(-n \theta f_0 r_n^d)$.
\begin{theorem}[{\rm Limiting behaviour of $I_{n,k}$ in the non-uniform
	case}]
	\label{t:Inonunif}
	Let $k \in \N$. Then
	\bea
	\lim_{n \to \infty} \left( (n r^d)^{-1} \log (I_{n,k}/n) \right)
	= - \theta f_0.
	\label{e:Ilimlog}
	\eea
\end{theorem}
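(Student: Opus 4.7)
The strategy is to prove matching upper and lower bounds for $(nr^d)^{-1}\log(I_{n,k}/n)$. The lower bound $\liminf \geq -\theta f_0$ is essentially immediate from Lemma \ref{l:Ilower}: for any $f_0^+ > f_0$, the conclusion $n e^{-\theta f_0^+ nr^d} = o(I_{n,k})$ rearranges to $\log(I_{n,k}/n) + \theta f_0^+ nr^d \to +\infty$, so dividing by $nr^d$ gives $\liminf (nr^d)^{-1}\log(I_{n,k}/n) \geq -\theta f_0^+$. Letting $f_0^+ \downarrow f_0$ completes the lower bound.

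For the upper bound, I would decompose the integral in (\ref{e:exp_Snk}) according to whether the outer vertex $x_1$ lies in $A^{(r)}$ (the interior piece $I^{(1)}_{n,k}$) or in $A\setminus A^{(r)}$ (the boundary piece $I^{(2)}_{n,k}$). For the interior, $B_r(x_1)\subset A$ combined with $f\geq f_0$ on $A$ gives $\nu(B_r(\bx)) \geq \nu(B_r(x_1)) \geq f_0 \theta r^d$; connectivity forces $\int h_r(\bx)\nu^k(d\bx) = O(r^{d(k-1)})$; hence $I^{(1)}_{n,k} \leq C n^k r^{d(k-1)} e^{-f_0\theta nr^d}$, whose logarithm divided by $nr^d$ tends to $-\theta f_0$ since $(\log(nr^d))/(nr^d) \to 0$.

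For the boundary, uniform continuity of $f$ on compact $A$ lets me replace $f$ by $f_1-\varepsilon$ on $B_r(x_1)\cap A$ once $r$ is small (choose $x' \in \partial A$ within distance $r$ of $x_1$ and use $f(x')\geq f_1$). Combined with $\vol(A\cap B_r(x_1))\geq (\theta/2-o(1))r^d$ and $\vol(A\setminus A^{(r)})=O(r)$, both from Lemma \ref{l:bdy_est}, this gives $I^{(2)}_{n,k} \leq C r \cdot n^k r^{d(k-1)} \exp(-(f_1-\varepsilon)(\theta/2 - o(1)) n r^d)$. Fixing any $b \in (b_*,\, 1/\max(f_0, d(f_0-f_1/2)))$ with $b_* := \limsup (n\theta r^d)/\log n$, assumption (\ref{e:supcri3}) gives $\log n /(nr^d) \geq \theta/b$ for all large $n$, hence $(\log r)/(nr^d) \leq -\theta/(db) + o(1)$. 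Taking the logarithm of the boundary bound, dividing by $nr^d$, and then letting $n\to\infty$, $\varepsilon\downarrow 0$, and $b \downarrow 1/\max(\cdots)$ yields $\limsup (nr^d)^{-1}\log(I^{(2)}_{n,k}/n) \leq -\theta/(db) - f_1\theta/2$, which is strictly less than $-\theta f_0$ precisely because $b < 1/[d(f_0-f_1/2)]$. Combining the two pieces gives $\limsup (nr^d)^{-1}\log(I_{n,k}/n) \leq -\theta f_0$, which matches the lower bound.

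The main obstacle is the boundary piece: a naive estimate using only $f\geq f_0$ and the half-ball volume near $\partial A$ would give exponential rate $f_0\theta/2$, spuriously dominating the interior rate $f_0\theta$ on the log scale. The constraint (\ref{e:sublog2}), through its second argument $d(f_0-f_1/2)$, is designed precisely so that the $O(r)$ boundary-strip volume together with the continuity-based improvement from $f_0$ to $f_1$ recovers enough decay to make the boundary negligible; this is where that second term in the maximum becomes essential.
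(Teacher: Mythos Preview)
Your proposal is correct and follows essentially the same approach as the paper: Lemma~\ref{l:Ilower} for the lower bound, and an interior/boundary decomposition of the integral in (\ref{e:exp_Snk}) for the upper bound, with the boundary piece controlled via continuity of $f$ near $\partial A$, the half-ball volume estimate from Lemma~\ref{l:bdy_est}, and the $O(r)$ measure of the boundary strip, together with the constraint (\ref{e:supcri3}). The only cosmetic difference is that the paper shows the boundary term is $o$ of the interior term and then reads off the log-rate, whereas you compute the log-rate of each piece separately; your remark about ``letting $b \downarrow 1/\max(\cdots)$'' is slightly garbled (any fixed $b$ in the open interval already gives a bound strictly below $-\theta f_0$), but the argument stands.
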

\begin{proof}
	By Lemma \ref{l:Ilower}, given $f_0^+ > f_0$ we have
	for large $n$ that $I_{n,k} \geq n \exp(-n \theta f_0^+ r^d)$,
	so that $\log(I_{n,k}/n) \geq - n \theta f_0^+ r^d$, and hence
	$$
	\liminf_{n \to \infty} \left( (n  r^d)^{-1} \log (I_{n,k}/n) \right)
	\geq - \theta f_0.
	$$
	For a bound the other way, note that $\nu(B_r(x) ) \geq \theta f_0 r^d$
	for $x \in A^{(r)}$. Also using Lemma \ref{l:bdy_est},
	given $f_1^- < f_1$ we have for $n $ large enough that
	$\nu(B_r(x)) \geq (\theta/2) f_1^- r^d$ for all $x\in A$. Therefore
	using the last part of Lemma \ref{l:bdy_est}, we have
	that
	\begin{align}
		I_{n,k} \leq & n \nu(A^{(r)}) (n \fmax \theta ((k-1)r)^d)^{k-1}
	\exp(-n \theta f_0 r^d)
	\nonumber	\\
		& + O(nr (nr^d)^{k-1} \exp(-n (\theta/2)
	f_1^- r^d) ).
	\label{e:Ilb}
		\end{align}
	The second term in the right-hand side of (\ref{e:Ilb}),
	divided by the first term,
	is bounded by a constant times 
	$r \exp(n \theta r^d(f_0 - f^-_1/2))$, and using
	(\ref{e:supcri3}) we can show this tends to zero, provided
	$f_1^-$ is chosen close enough to $f_1$.
	Hence by (\ref{e:Ilb})
	we can deduce that
	$$
	\limsup_{n \to \infty} \left( (n  r^d)^{-1} \log (I_{n,k}/n) \right)
	\leq - \theta f_0,
	$$
	and hence (\ref{e:Ilimlog}).
\end{proof}
\section{Variance asymptotics and laws of large numbers}
\label{s:t3}
In this section,
we  show that the variance of the $k$-cluster count
$S_{n,k}$ and its expectation are asymptotically equivalent, with explicit rates, and likewise for $S'_{n,k}$. 
As mentioned in Section \ref{secintro}, this is enough to yield
 weak laws of large numbers for $S_{n,k}$ and $S'_{n,k}$ but we shall
 also derive 
 concentration results which yield strong laws of large numbers.
\subsection{Variance asymptotics in the Poisson setting}
\begin{proposition}
\label{p:var_k}
	Let $k\in\NN$. Then there exists $c >0$ such that as $n \to \infty$,
 \begin{align*}
	 \Var[S'_{n,k}]= \EE[S'_{n,k}](1+O(e^{-cnr^d}))
	 = I_{n,k} (1+O(e^{-cnr^d})).
 \end{align*}
\end{proposition}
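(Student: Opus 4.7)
The plan is to apply the multivariate Mecke formula to $\E[S'_{n,k}(S'_{n,k}-1)]$, compare with $I_{n,k}^2$ from (\ref{e:exp_Snk}), and decompose the difference by the mutual distance of the two $k$-tuples. Two distinct $k$-components of $G(\Po_n,r)$ must be disjoint point sets at distance strictly greater than $r$, so
\begin{align*}
\E[S'_{n,k}(S'_{n,k}-1)] = \frac{n^{2k}}{(k!)^2}\int_{A^{2k}}h_r(\bx_1)h_r(\bx_2)\1\{\dist(\bx_1,\bx_2)>r\}e^{-n\nu(B_r(\bx_1)\cup B_r(\bx_2))}\nu^{2k}(d(\bx_1,\bx_2)).
\end{align*}
Comparing with the integral representation of $I_{n,k}^2$, the integrand difference vanishes on $\{\dist(\bx_1,\bx_2)>2r\}$ (where $B_r(\bx_1),B_r(\bx_2)$ are disjoint), is nonnegative on $\{r<\dist(\bx_1,\bx_2)\leq 2r\}$, and equals $-e^{-n\nu(B_r(\bx_1))-n\nu(B_r(\bx_2))}$ on $\{\dist(\bx_1,\bx_2)\leq r\}$. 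Writing these contributions as $\Delta_+$ and $-\Delta_-$ respectively, and noting that $\E[S'_{n,k}]=I_{n,k}$, we have $\Var[S'_{n,k}]-I_{n,k}=\Delta_+-\Delta_-$, so it suffices to show $\Delta_\pm=O(e^{-cnr^d})I_{n,k}$.

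For $\Delta_-$: when $\dist(\bx_1,\bx_2)\leq r$, some pair $y_1\in\bx_1,y_2\in\bx_2$ has $\|y_1-y_2\|\leq r$, so the ball of radius $r/2$ around their midpoint lies in $B_r(y_1)\cap B_r(y_2)$, giving $\nu(B_r(\bx_1)\cap B_r(\bx_2))\geq c_0 r^d$ for some $c_0>0$ (with Lemma \ref{l:bdy_est} handling near-boundary cases). Using the identity $e^{-n\nu(B_r(\bx_1))-n\nu(B_r(\bx_2))}=e^{-n\nu(B_r(\bx_1)\cup B_r(\bx_2))}e^{-n\nu(B_r(\bx_1)\cap B_r(\bx_2))}$ together with $h_r(\bx_1)h_r(\bx_2)\1\{\dist(\bx_1,\bx_2)\leq r\}\leq h_r(\bx_1\cup\bx_2)$ yields $\Delta_-\leq\binom{2k}{k}e^{-c_0 nr^d}I_{n,2k}$. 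By Theorem \ref{t:Ilim} in the uniform case, and by Lemma \ref{l:Ilower} together with Theorem \ref{t:Inonunif} in general, $I_{n,2k}/I_{n,k}$ grows at most subexponentially in $nr^d$; absorbing this factor into a strictly smaller exponential gives $\Delta_-=O(e^{-c_1 nr^d})I_{n,k}$.

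For $\Delta_+$: the key geometric step is that when $\dist(\bx_1,\bx_2)=s\in(r,2r]$ and $y_i^*\in\bx_i$ realise this minimum, $B_r(\bx_1)\cup B_r(\bx_2)\supset B_r(y_1^*)\cup B_r(y_2^*)$, whose volume is $2\theta r^d-\vol(B_r(y_1^*)\cap B_r(y_2^*))$. Since the lens $\vol(B_r(y_1^*)\cap B_r(y_2^*))$ is decreasing in $s$ and strictly less than $\theta r^d$ at $s=r$, we obtain $\vol(B_r(\bx_1)\cup B_r(\bx_2))\geq(1+\beta_d)\theta r^d$ for a constant $\beta_d>0$ depending only on $d$; combined with Lemma \ref{l:bdy_est}, $\nu(B_r(\bx_1)\cup B_r(\bx_2))\geq(1+\beta_d/2)f_0\theta r^d$ for the dominant interior configurations (boundary-adjacent terms carry an extra factor of $r$ and are absorbed). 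Bounding $e^{n\nu(B_r(\bx_1)\cap B_r(\bx_2))}-1\leq e^{n\nu(B_r(\bx_1)\cap B_r(\bx_2))}$ so that the $\Delta_+$ integrand is at most $h_r(\bx_1)h_r(\bx_2)e^{-n\nu(B_r(\bx_1)\cup B_r(\bx_2))}$, and using that the support of $h_r(\bx_1)h_r(\bx_2)\1\{\dist(\bx_1,\bx_2)\leq 2r\}$ has $\nu^{2k}$-measure $O(r^{d(2k-1)})$ (since $\bx_1\cup\bx_2$ is contained in a ball of radius $O(r)$ about any of its points), we get $\Delta_+=O(n(nr^d)^{2k-1}e^{-(1+\beta_d/2)f_0\theta nr^d})$. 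Combined with Lemma \ref{l:Ilower}, the extra decay $e^{-(\beta_d/2) f_0\theta nr^d}$ dominates any polynomial factor in $nr^d$, giving $\Delta_+=O(e^{-c_2 nr^d})I_{n,k}$.

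The main obstacle is the refined geometric lower bound on $\vol(B_r(\bx_1)\cup B_r(\bx_2))$ used in the $\Delta_+$ analysis: the naive bound by $\vol(B_r(\bx_1))$ yields only the same exponential decay as $I_{n,k}$ and is insufficient, and the strict inequality $\vol(B_r(y_1^*)\cap B_r(y_2^*))<\theta r^d$ at $\|y_1^*-y_2^*\|=r$ (a lens of two unit balls at mutual centre distance equal to the radius is a strict fraction of a single ball) is precisely what supplies the crucial exponential margin that makes the scheme work.
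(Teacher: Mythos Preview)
Your argument is correct and follows the paper's strategy: apply the multivariate Mecke formula to $\E[S'_{n,k}(S'_{n,k}-1)]$, compare with $I_{n,k}^2$, and split according to whether $\dist(\bx_1,\bx_2)$ lies in $(2r,\infty)$, $(r,2r]$, or $[0,r]$. The paper's bounds on the two nontrivial pieces are somewhat more direct than yours, however. For the close-range term the paper (bounding the superset it calls $J_{2,n}$) simply uses $\nu(B_r(\by))\ge(\theta/3)f_0 r^d$ from Lemma~\ref{l:bdy_est} to peel off one exponential and factor out $I_{n,k}$ in a single line; your route via $I_{n,2k}$ and Theorem~\ref{t:Inonunif} works but is a detour, and your lower bound $\nu(B_r(\bx_1)\cap B_r(\bx_2))\ge c_0 r^d$ through the midpoint ball $B_{r/2}(m)$ needs a small extra argument when $m\notin A$, since Lemma~\ref{l:bdy_est} as stated only applies to centres in $A$. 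For the medium-range term (the paper's $J_{1,n}$, handled in Lemma~\ref{l:J1}) the paper uses a lexicographic trick instead of your two-ball-union volume bound: after symmetrising so that the rightmost point of $\by$ lies to the right of all points of $\bx$, the right half of the $r$-ball around that point is disjoint from $B_r(\bx)$, giving $\nu(B_r(\by)\setminus B_r(\bx))\ge(\theta f_0/2)r^d$ in the interior and hence $e^{-n\nu(B_r(\bx)\cup B_r(\by))}\le e^{-n\nu(B_r(\bx))}e^{-(\theta f_0/2)nr^d}$; this lets $I_{n,k}$ factor out exactly from the outer integral, rather than entering through the lower bound Lemma~\ref{l:Ilower} as in your argument. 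Both approaches ultimately rely on the same boundary analysis (an extra factor of $r$ from $\vol(A\setminus A^{(kr)})$ together with the upper bound in (\ref{e:supcri3})), which you have correctly identified.
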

\begin{proof}
	Since $S'_{n,k}(S'_{n,k}-1)$ is the number of ordered
	pairs of distinct $k$-clusters in $G(\Po_n,r)$,
	with $h(\cdot):= h_r(\cdot)$ as given at (\ref{e:Brhrdef}),
	we can write $S'_{n,k}(S'_{n,k}-1)$ as follows
\begin{align*}
	\sum_{\ph, \psi\subset \Po_n, |\ph|=|\psi|=k} 
	h(\ph)h(\psi)\1\{ (\Po_n\setminus \ph\setminus \psi) 
	\cap B_r (\ph\cup\psi)=\emptyset,  \dist(\ph,\psi) > r \}.
\end{align*}
Indeed, 
	for distinct $k$-subsets $\psi, \ph$ of $\Po_n$, their distance must be larger than $r$ in order that they are both connected components of order $k$, for otherwise $\psi$ and $\ph$ are connected with $|\psi\cup\ph|>k$. 
	Thus by the multivariate Mecke equation
	$\EE[(S'_{n,k})^2]-\EE[S'_{n,k}]$ equals
\begin{align*}
	\frac{n^{2k}}{k!k!} 
	\int_{A^{k}} \int_{A^k} 
	h(\bx)h(\by) \exp(-n \nu(B_r(\bx,\by)))
	\1\{\dist(\bx,\by) > r\} \nu^k(d\by) \nu^k( d\bx),
\end{align*}
	where $B_r(\bx,\by):= B_r(\bx) \cup B_r(\by)$.
	We compare this integral with 
\begin{align*}
	\EE[S'_{n,k}]^2 = \frac{n^{2k}}{k!k!} 
	\int_{A^{k}} \int_{A^k} 
	h(\bx)h(\by) \exp(-n[ \nu(B_r(\bx)) +  \nu (B_r(\by))])
	\nu^k(d\by) \nu^k(d\bx),
\end{align*}
	which comes from (\ref{e:exp_Snk}).
	Observe that  $\nu(B_r(\bx,\by)) = \nu(B_r(\bx)) + \nu(B_r(\by))$
	whenever $\dist(\bx,\by) > 2r$.
	Therefore $|\Var[S'_{n,k}]-\EE[S'_{n,k}]|\le J_{1,n}+J_{2,n}$, 
	where
\begin{align}
	J_{1,n}&:=\frac{n^{2k}}{k!k!} \int_{A^{k}} \int_{A^k}
	h(\bx)h(\by) \exp(-n \nu(B_r(\bx,\by)))
	\1\{\dist(\bx,\by)\in (r,2r] \} 
	\nu^{k}(d\by) \nu^{k}(d\bx);
	\label{e:J1def}
	\\
	J_{2,n}&:= \frac{n^{2k}}{k!k!} \int_{A^{2k}}
	h(\bx)h(\by) \exp(-n[\nu(B_r(\bx))+\nu(B_r(\by))])
	\1\{\dist(\bx,\by) \leq 2r \} \nu^{2k}(d(\bx, \by)).
	\label{e:J2def}
\end{align}
We estimate $J_{1,n}$ in Lemma \ref{l:J1} below.
For $J_{2,n}$, note that by Lemma \ref{l:bdy_est}  there exists $n_0$ 
such that for all $n \geq n_0$ and all
$\by \in A^k$
we have $\nu(B_r(\by)) \geq (\theta/3) f_0 r^d$.
Hence for $\bx \in A^k$ we have
$$
\frac{n^k}{k!} \int_{A^k} h(\by)
\exp(-n \nu(B_r(\by)))
\1\{\dist(\bx,\by) \leq 2r\} \nu^k(d \by)
=O( (nr^d)^k \exp(-n (\theta/3) f_0  r^d)),
$$
and hence 
$J_{2,n}
= O( I_{n,k} (nr^d)^k\exp(-n f_0 (\theta/3) r^d))$, which
is $O(I_{n,k} \exp(-n f_0 (\theta/4)r^d))$.
Combined with Lemma \ref{l:J1}, this completes the proof.
\end{proof}
\begin{lemma}
	\label{l:J1}
	Let $J_{1,n}$ be given by (\ref{e:J1def}).
	There exists $c>0$ such that 
	$J_{1,n} = O(e^{-cnr^d}I_{n,k})$ as $n \to \infty$.
\end{lemma}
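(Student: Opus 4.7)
The plan is to factor the exponential $\exp(-n\nu(B_r(\bx,\by)))$ as $\exp(-n\nu(B_r(\bx))) \cdot \exp(-n[\nu(B_r(\bx,\by)) - \nu(B_r(\bx))])$ so as to isolate the integral defining $I_{n,k}$, and to show that the second exponential is bounded by $e^{-c_1 f_0 nr^d}$ for some $c_1>0$, which then dominates the polynomial factor $(nr^d)^k$ picked up from the extra $\by$-integration. The non-trivial point is that $\nu(B_r(\by) \setminus B_r(\bx))$ can become arbitrarily small when $\bx$ is sufficiently spread out to cover most of $B_r(\by)$, so a simple uniform lower bound fails.

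To circumvent this, I split by the diameter of $\bx$. Fixing a small $\rho = \rho(d)>0$, on the event $\{\diam(\bx)\leq \rho r\}$ (the \emph{compressed case}), the balls $B_r(x_j)$ are contained in the slightly enlarged ball $B_{(1+\rho)r}(x_i)$ around any fixed $x_i \in \bx$, so a direct spherical-cap computation yields $\vol(B_r(y) \setminus B_{(1+\rho)r}(x_i)) \geq c_1(d) r^d$ for every $y$ with $\|y - x_i\| \in (r, 2r]$ and $\rho$ small enough; this translates into $\nu(B_r(\by) \setminus B_r(\bx)) \geq c_2 f_0 r^d$ once the $O(r)$-thick boundary strip $A\setminus A^{(r)}$ (in which the cap may protrude outside $A$) is separated off using Lemma \ref{l:bdy_est}, since its contribution to the $\by$-integral is smaller by an extra factor of $r$. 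On the complementary event $\{\diam(\bx) > \rho r\}$ (the \emph{spread case}), choosing two $\bx$-points at distance $> \rho r$ and applying a two-ball lens estimate gives $\vol(B_r(\bx)) \geq (1+c_\rho)\theta r^d$ for some $c_\rho>0$, so that $\exp(-n\nu(B_r(\bx)))$ itself carries an extra factor $\exp(-c_\rho f_0 \theta n r^d/2)$ and one can dispense with any bound on the increment $\nu(B_r(\by) \setminus B_r(\bx))$.

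In both cases, combining with the trivial estimate $\int_{A^k} h(\by)\1\{\dist(\bx,\by)\leq 2r\}\nu^k(d\by) = O(r^{dk})$ for the inner $\by$-integral (confine one $y_j$ to $B_{2r}(\bx)$ and the remaining $y_j$'s to $B_{(k-1)r}$ of that point), and with the identity $I_{n,k} = (n^k/k!)\int h(\bx) e^{-n\nu(B_r(\bx))}\nu^k(d\bx)$, we deduce
$$J_{1,n} \leq C(nr^d)^k e^{-c_3 f_0 n r^d} I_{n,k}$$
for some $c_3, C>0$. Since $nr^d\to\infty$ by \eqref{e:supcri3}, the polynomial factor $(nr^d)^k = \exp(k\log(nr^d))$ is absorbed into the exponential for $n$ large, yielding $J_{1,n} = O(e^{-cnr^d} I_{n,k})$ for any $c\in(0, c_3 f_0)$. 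The main obstacle is the geometric bookkeeping of the case distinction: the compressed-case cap estimate must hold uniformly in the position of $y$ within $(r,2r]$ of $x_i$ (so $\rho$ must be fixed strictly smaller than the relevant trigonometric threshold), and the spread-case lens bound must be uniform in the configuration of $\bx$; neither is deep, but both require care, as does the boundary-layer argument converting $\vol$ bounds into $\nu$ bounds.
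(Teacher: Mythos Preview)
Your approach is correct but takes a genuinely different route from the paper. The paper avoids the compressed/spread dichotomy entirely by a symmetrisation trick: writing $\bx \prec \by$ to mean that the rightmost entry of $\by$ lies strictly to the right of every entry of $\bx$, one has $J_{1,n}$ equal to twice the same integral restricted to $\{\bx \prec \by\}$. On that event, the \emph{right half} of the ball $B_r(y^*)$ centred at the rightmost point $y^*$ of $\by$ is automatically disjoint from $B_r(\bx)$ (since every $x_i$ lies weakly to the left of $y^*$), so $\nu(B_r(\by)\setminus B_r(\bx)) \geq (\theta/2)f_0 r^d$ uniformly, with no case split on $\diam(\bx)$. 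This gives the factorisation $e^{-n\nu(B_r(\bx,\by))} \leq e^{-n\nu(B_r(\bx))} e^{-(\theta f_0/2)nr^d}$ directly and the bound against $I_{n,k}$ follows from the identity alone; only the boundary layer $A^k\setminus (A^{(9kr)})^k$ needs Lemma~\ref{l:Ilower}.

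Your dichotomy works too, and your compressed-case cap estimate is fine. One point to tighten: in the spread case, the statement that $e^{-n\nu(B_r(\bx))}$ ``carries an extra factor $e^{-c_\rho f_0\theta nr^d/2}$'' does not let you bound the spread integral by $e^{-c_3 f_0 nr^d}$ times the \emph{full} integral $\frac{k!}{n^k}I_{n,k}$ using only the identity for $I_{n,k}$ (you cannot write $e^{-n\nu(B_r(\bx))} \leq e^{-cnr^d}\cdot e^{-n\nu(B_r(\bx))}$). What you actually need there is to bound the spread integral absolutely by $Cr^{d(k-1)}e^{-(1+c_\rho)f_0\theta nr^d}$ and then invoke the lower bound $I_{n,k} \gg n e^{-f_0^+\theta nr^d}$ from Lemma~\ref{l:Ilower} to get the ratio. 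This is easily patched, but your displayed inequality $J_{1,n}\leq C(nr^d)^k e^{-c_3 f_0 nr^d}I_{n,k}$ (with that particular power of $nr^d$) is only literally correct for the compressed contribution; the spread contribution comes out as $O((nr^d)^{2k-1}e^{-c'nr^d}I_{n,k})$, which of course still gives the desired conclusion. The paper's half-ball trick buys you a one-line interior argument with no appeal to Lemma~\ref{l:Ilower} except at the boundary; your argument buys robustness (it would survive replacing the Euclidean norm by one without a convenient half-space decomposition).
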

\begin{proof}
Let us write $\bx \prec \by$ 
	if there exists
	$y \in \{y_1,\ldots,y_k\}$ such that  all points of
	$\{x_1,\ldots,x_k\}$ precede $y$ in the lexicographic ordering
	(or in other words,
	 the rightmost entry of $\bx$ lies to the
	left of the rightmost entry of $\by$).
	Since the integrand in $J_1$ 
	is symmetric in $\bx$ and $\by$, $J_1$ is twice the same expression
	with the inner integral restricted to $\by$ satisfying $\bx \prec \by$.

	Suppose $\bx, \by \in (\R^d)^k$ with $h(\bx)=h(\by)=1$
	and $\dist(\by,\bx)\in (r,2r]$. 
	These conditions imply that $\by \in (B_{2kr}(x_1))^k$, where
	we write $\bx = (x_1,\ldots,x_k)$.  
	Thus, if also $\bx \in (A^{(9kr)})^k$ 
	then $\by \in (A^{(7kr)})^k$,
	and if moreover $\bx \prec \by$,
	by considering the right half of the ball of radius $r$ centred
	on the right-most entry of $\by$, we see that
	$\nu( B_r(\by) \setminus  B_r(\bx)) \geq (\theta f_0/2)r^d$.
	Therefore since
	$B_r(\bx,\by) = B_r(\bx) \cup (B_r(\by) \setminus B_r(\bx))$
	and this is a disjoint union, 

	\begin{align*}
		J_{1,n} \le \frac{2n^{k}}{k!} \int_{(A^{(9kr)})^k} h(\bx)
		\exp(-n \nu(B_r(\bx)))
		a(\bx) \nu^k(d\bx) + z_n,
\end{align*}
where
\bea
	a(\bx) :=\frac{n^k}{k!} \int_{A^k} e^{-(\theta f_0/2)nr^d} h(\by) \1\{
	\dist(\bx,\by)\in (r,2r], \bx \prec \by  \} \nu^k(d\by),
\label{0729a}
\eea
 and $z_n$ is a remainder term from $\bx$ near $\partial A$ 
 (to be dealt with later).

	As already mentioned, if $\bx, \by \in A^k$ with $h(\bx)=h(\by)=1$ and
	$r < \dist(\bx,\by) \leq 2r$, then $\by \in ( B_{2kr}(x_1))^k$, and 
	consequently
\begin{align*}
	a(\bx) \le \frac{(n\fmax)^k}{k!} e^{-(\theta f_0/2) nr^d}
	\theta^{k}  (2kr)^{kd} 
	= O( (n r^d)^k e^{- (\theta f_0/2)nr^d})
	= O( e^{- (\theta f_0/3)nr^d}),
\end{align*}
uniformly over $\bx \in A^k$. Therefore by
 virtue of \eqref{e:exp_Snk}, we have
$J_{1,n} - z_n =O( e^{-(\theta f_0/3)n r^d} I_{n,k}) $.

For the remainder term, note that given $f_1^-< f_1$,
by Lemma \ref{l:bdy_est} and the assumed continuity of $f|_A$,
there exists $n_0$ such
that for all $n \geq n_0$ and
 all $\bx \in A^k \setminus (A^{(9r)})^k$, we
 have $\nu(B_r(\bx)) \geq n (\theta f_1^-/2) r^d$.
 Also if $\bx = (x_1,\ldots,x_k) \in
A^k \setminus (A^{(9kr)})^k$ then at least one of $x_1,\ldots,x_k $
lies in $A \setminus A^{(9kr)}$. Introducing a factor of $k$ we
can assume this is $x_1$, and hence
deduce that
\bean
|z_n| \leq \frac{2 (n\fmax)^{2k}}{(k-1)!}
\int_{A \setminus A^{(9kr)}} 
(\theta (2 k r)^d)^{2k-1}   
d x_1 e^{-n\theta f_1^- r^d/2}= O(n r (nr^d)^{2k-1}
e^{-n \theta f_1^- r^d/2 }). 
\eean
Combined with (\ref{e:Iklower}) this shows that
$z_n/I_{n,k} = O( r \exp(n \theta r^d(f_0^+ -f_1^-/2)))$,
for any $f_0^+ > f_0$.
Using (\ref{e:supcri3}), we can choose $\eps >0$, $f_0^+ >f_0$ and $f_1^- < f_1$
in such a way that  this upper bound implies  $z_n/I_{n,k} =O(rn^{(1/d)-\eps})$,
and using (\ref{e:supcri3}) again we see that this is $O(e^{-c n r^d})$ for
some $c>0$. Thus $J_{1,n} = O(e^{-cnr^d}I_{n,k})$ for some $c >0$.
\end{proof}


\subsection{Variance asymptotics in the binomial case}

\begin{proposition}
\label{p:var_iso_bin}
There exists $c>0$ such that as $n \to \infty$,
	\bea
	\Var[S_{n,k}]=\EE[S_{n,k}](1+O(e^{-cnr^d})).
	\label{e:varSn}
	\eea
\end{proposition}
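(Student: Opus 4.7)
The identity $\Var[S_{n,k}] - \E[S_{n,k}] = \E[S_{n,k}(S_{n,k}-1)] - \E[S_{n,k}]^2$ holds since any two distinct $k$-subsets of $\cX_n$ that are both components of order $k$ in $G(\cX_n,r)$ must be vertex-disjoint; hence it suffices to bound the right-hand side by $O(e^{-cnr^d})\,\E[S_{n,k}]$. Counting ordered pairs of disjoint $k$-subsets of $\cX_n$ and conditioning on the remaining $n-2k$ points avoiding $B_r(\bx)\cup B_r(\by)$, one obtains
\begin{align*}
\E[S_{n,k}(S_{n,k}-1)] = \tfrac{n!}{(k!)^2(n-2k)!}\int_{A^{2k}} h(\bx)h(\by)\,\1\{\dist(\bx,\by)>r\}\,(1-\nu(B_r(\bx)\cup B_r(\by)))^{n-2k}\,\nu^{2k}(d(\bx,\by)),
\end{align*}
where we again write $h=h_r$; meanwhile, (\ref{e:ESn}) expresses $\E[S_{n,k}]^2$ as an analogous integral over $A^{2k}$ with prefactor $\binom{n}{k}^2$, integrand $h(\bx)h(\by)(1-\nu(B_r(\bx)))^{n-k}(1-\nu(B_r(\by)))^{n-k}$, and no distance constraint.

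My plan is to follow closely the strategy of Proposition \ref{p:var_k}, splitting the difference of these two quantities according to whether $\dist(\bx,\by)>2r$. In the small-distance region, the $\E[S_{n,k}(S_{n,k}-1)]$-contribution lies in $\dist(\bx,\by)\in(r,2r]$ and is $O(e^{-cnr^d}I_{n,k})$ by a direct adaptation of Lemma \ref{l:J1}: one replaces $e^{-n\nu(B_r(\cdot))}$ throughout by $(1-\nu(B_r(\cdot)))^{n-k}\leq e^{-(n-k)\nu(B_r(\cdot))}$, and the geometric observation that shifting by $r$ exposes an extra $(\theta f_0/2)r^d$ of volume in $B_r(\by)\setminus B_r(\bx)$ is unaffected. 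The $\E[S_{n,k}]^2$-contribution on $\dist(\bx,\by)\le 2r$ is similarly $O(e^{-cnr^d}I_{n,k})$ via the uniform lower bound $\nu(B_r(\by))\geq (\theta f_0/3)r^d$ (Lemma \ref{l:bdy_est}), mirroring the treatment of $J_{2,n}$ in the Poisson case.

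In the large-distance region $\dist(\bx,\by)>2r$, $B_r(\bx)$ and $B_r(\by)$ are disjoint, so with $p=\nu(B_r(\bx))$ and $q=\nu(B_r(\by))$ (both $\Theta(r^d)$), I compare the integrands factor by factor. The combinatorial prefactors satisfy $n!/((k!)^2(n-2k)!)=\binom{n}{k}^2(1+O(1/n))$; rewriting $(1-p)^{n-k}(1-q)^{n-k}=(1-p-q+pq)^{n-k}$, a standard Taylor expansion of $-k\log(1-p-q)-(n-k)\log(1+pq/(1-p-q))$ yields
\begin{align*}
\left|\frac{(1-p-q)^{n-2k}}{(1-p-q+pq)^{n-k}}-1\right| = O(r^d+npq) = O(nr^{2d}),
\end{align*}
uniformly, since $pq=O(r^{2d})$ and $r^d=O(nr^{2d})$ in the mildly dense regime. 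Consequently the contribution to the difference from this region is $O(n^{-1}+nr^{2d})\cdot\E[S_{n,k}]^2 = O((n^{-1}+nr^{2d})I_{n,k}^2)$.

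The main obstacle is then to verify this is $O(e^{-cnr^d}I_{n,k})$, i.e.\ $(n^{-1}+nr^{2d})I_{n,k}=O(e^{-cnr^d})$. This follows from Theorem \ref{t:Inonunif}, which gives $I_{n,k}/n=\exp(-(\theta f_0+o(1))nr^d)$; combined with $nr^{2d}=(nr^d)r^d=O((\log n) r^d)$, both $n^{-1}I_{n,k}$ and $nr^{2d}I_{n,k}$ equal a polylogarithmic factor times $\exp(-(\theta f_0+o(1))nr^d)$, which is $O(e^{-cnr^d})$ for any fixed $c<\theta f_0$. Combining all the above estimates and invoking Proposition \ref{p:Snmeanbin} to pass between $\E[S_{n,k}]$ and $I_{n,k}$ then yields (\ref{e:varSn}).
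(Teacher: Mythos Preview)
Your proposal is correct and follows essentially the same strategy as the paper's proof: both reduce to bounding $\E[S_{n,k}(S_{n,k}-1)]-\E[S_{n,k}]^2$, split according to whether $\dist(\bx,\by)>2r$, invoke (an adaptation of) Lemma~\ref{l:J1} for the near region, and Taylor-expand the binomial factors in the far region. The only cosmetic differences are that the paper writes $S_{n,k}=\sum_i\xi_i$ via leftmost-point indicators and uses $h_r^*$, and that in the far region the paper bounds $e^{-nq}\le e^{-(\theta f_0/3)nr^d}$ directly inside the integral, whereas you keep the full product and appeal to Theorem~\ref{t:Inonunif} afterward to control $(n^{-1}+nr^{2d})I_{n,k}$; both routes yield the same $O(e^{-cnr^d}I_{n,k})$ bound.
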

\begin{proof}
	Let $\prec$ denote the lexicographic ordering on $\R^d$.  Write 
	$S_{n,k}= \sum_{i=1}^n \xi_i$,  here  taking
	\begin{align}
	\xi_i:= \1\{|\cC_r(X_i,\cX_n)|=k, X_i \prec y ~ \forall
	y \in \cC_r(X_i,\cX_n) \setminus \{X_i\}\}.
	\label{e:xidef}
		\end{align}
	Then
	 $\EE[S_{n,k}] = n \EE[\xi_1]$, and
	 $
	 \EE[S_{n,k}^2]= n\EE[ \xi_1^2]+ n(n-1) \EE[\xi_1\xi_2],
	 $
	 so that $\EE[S_{n,k}^2]= \EE[S_{n,k}]+ n(n-1) \EE[\xi_1\xi_2]$.
	 Hence,
	 \bea
	\Var[S_{n,k}]- \EE[S_{n,k}]= - n \EE[\xi_1\xi_2] + n^2
	(\EE[\xi_1 \xi_2] - \EE[\xi_1] \EE[\xi_2]).
	\label{0729b}
	\eea
	 In view of 
	Proposition \ref{p:Snmeanbin}
	 we need to show the right hand
	 side of (\ref{0729b}) is $O(e^{-cnr^d}I_{n,k})$.

	For $\bx = (x_1,\ldots,x_k) \in (\R^d)^k$,
	recall from (\ref{e:Brhrdef})
	that $h^*_r(\bx)$ is the indicator variable
	of the event that $G(\{x_1,\ldots,x_k\},r)$ is connected
	and moreover $x_1 \prec x_i$ for $i=2,\ldots,k$.  Then
	\begin{align}
	\EE[\xi_1]=  \binom{n-1}{k-1}
	\int_{A^{k}} h^*_r(\bx) (1- \nu(B_r(\bx)))^{n-k} \nu^{k}
	(d \bx).
		\label{e:Exi1}
	\end{align}
	 Consider the first term in the right hand side of (\ref{0729b}).
	 Observe that if $\xi_1 = \xi_2 =1$ then
	 $\cC_r(X_2,\X_n) \cap B_r(\cC_r(X_1,\X_n) ) = \emptyset$.
	 Hence, using the bound $1-t \leq e^{-t}$
	 for all $t \geq 0$ and writing $B_r(\bx,\by):= B_r(\bx) \cup 
	 B_r(\by)$, we have that $n \EE[\xi_1 \xi_2]$ equals
	 \begin{align}
		 & n \binom{n-2}{k-1} \binom{n-1-k}{k-1} 
		 \int_{A^k}
		 \int_{(A \setminus B_r(\bx))^k }
		 h_r^*(\bx) h_r^*(\by)
		 (1- \nu(B_r(\bx,\by)))^{n-2k} 
		 \nu^{k}(d \by)
		 \nu^{k}(d \bx)
		 \label{e:Exi1xi2}
		 \\
		 & \leq \frac{2n^{2k-1}}{(k-1)!^2}
		 \int_{A^k}
		 \int_{(A \setminus B_r(\bx))^k }
		 h_r^*(\bx) h_r^*(\by) 
		 \exp(- n \nu(B_r(\bx,\by)) ) 
		 \nu^k(d\by) \nu^k(d\bx),
		 \label{0729d}
	 \end{align}
	 provided $n$ is large enough so that $e^{2 k \nu(B_r(\bx,
	 \by))} \leq 2$ for all $\bx,\by \in A^k$.

Let $\alpha < \min(f_0,f_1/2)$.  Using Lemma \ref{l:bdy_est},
we have for all large enough $n$ and all
$\bx \in A^k$, $\by \in (A \setminus B_{2r}(\bx))^k$  that $\nu( B_r(\bx,\by)) \geq
	 \nu(B_r(\bx)) + \theta \alpha r^d$.
 Hence for such $(\bx,\by)$  the  integrand in (\ref{0729d}) is bounded by
 $h^*_r(\bx)h^*_r(\by) e^{-n \nu(B_r(\bx))} e^{-\alpha \theta n r^d}$. Since
 $\int_{A^k} h^*_r(\by)\nu^k(d\by) = O(r^{d(k-1)})$, and $I_{n,k} =
 \frac{n^k}{(k-1)!} \int_{A^k} h^*_r(\bx) e^{-n \nu(B_r(\bx))} \nu^k(d\bx)$
 by the multivariate Mecke formula,
 \begin{align*}
 n^{2k-1} \int_{A^k} \int_{(A \setminus B_{2r}(\bx))^k }
h_r^*(\bx) h_r^*(\by) e^{- n \nu(B_r(\bx,\by)) } \nu^k(d\by) \nu^k(d\bx)
 = O( (nr^d)^{k-1} e^{-n\theta \alpha r^d} I_{n,k}). 
	 \end{align*}
 On the other hand, if $\by \in A^k \setminus (A \setminus B_{2r}(\bx))^k$,
and $h_r^*(\by) =1$, then $\by \in (B_{2kr}(x_1) )^k$, and hence
$\int_{A^k \setminus (A \setminus B_{2r}(\bx))^k} h_r^*(\by) \nu^k(d\by)
	 = O(r^{dk})$, and $\nu(B_r(\bx,\by)) \geq \nu(B_r(\bx))$, so that
\begin{align*}
   n^{2k-1} \int_{A^k} \int_{A^k \setminus (A \setminus B_{2r}(\bx))^k }
h_r^*(\bx) h_r^*(\by) e^{- n \nu(B_r(\bx,\by)) } \nu^k(d\by) \nu^k(d\bx)
		 = O( r^d (nr^d)^{k-1}  I_{n,k}), 
\end{align*}
 which is $O(e^{-cnr^d}I_{n,k})$ for some $c>0$ by (\ref{e:supcri3}). 
Thus the first term in the right hand side of (\ref{0729b})
	  is $O( e^{- c n r^d}I_{n,k})$ for some $c>0$.

Now consider the second term in the right hand side of (\ref{0729b}).
Given $n$, for $\bx,\by \in (\R^d)^k$
let $p_\bx = \nu(B_r(\bx))$ and $p_{\bx,\by}:= \nu(B_r(\bx,\by))$.
By (\ref{e:Exi1}) and (\ref{e:Exi1xi2}),
\begin{align}
	\EE[\xi_1 \xi_2] - \EE[\xi_1] \EE[\xi_2]
	= \int_{A^k} \int_{(A \setminus B_{2r}(\bx))^k}
	\left[\binom{n-2}{k-1} \binom{n-1-k}{k-1} (1-p_\bx -p_\by)^{n-2k} 
	\right.
	\nonumber \\
	\left.
	- \binom{n-1}{k-1}^2 (1-p_\bx)^{n-k}(1-p_\by)^{n-k} \right]
	h^*_r(\bx) h^*_r(\by) \nu^k(d\by) \nu^k(d\bx)
	\nonumber
	\\
	+ \binom{n-2}{k-1} \binom{n-1-k}{k-1}
	\int_{A^k} \int_{(A \setminus B_r(\bx))^k \setminus 
	(A \setminus B_{2r}(\bx))^k } h^*_r(\bx) h^*_r(\by)  (1-p_{\bx,\by})^{n-2k}
	\nu^k(d\by) \nu^k(d\bx)
	\nonumber \\
	- \binom{n-1}{k-1}^2 \int_{A^k}
	\int_{A^k \setminus (A \setminus B_{2r}(\bx))^k }  
	h^*_r(\bx) h^*_r(\by)
	 (1-p_\bx)^{n-k}(1-p_\by)^{n-k} \nu^k(d\by) \nu^k(d\bx).
	 \label{0729e}
\end{align}
Note that $  p_\bx \leq k \fmax \theta r^d$ for all $\bx \in A^k $.
Writing $p$ for $p_\bx$ and $q$ for $p_\by$, and using the fact that
as $n \to \infty$ with $k $ fixed,
$$
\binom{n-1}{k-1}^2 \div \left[ \binom{n-2}{k-1} \binom{n-1-k}{k-1} \right]
= 1 + O(n^{-1}) ,
$$
we estimate 
the integrand in the first term in the right hand side of (\ref{0729e})
as follows:
\begin{align}
	& \binom{n-2}{k-1} \binom{n-1-k}{k-1}
	(1-p-q)^{n-2k} - \binom{n-1}{k-1}^2 
	(1- p -q+ pq)^{n-k} 
	\nonumber \\
	& =
	 \binom{n-2}{k-1} \binom{n-1-k}{k-1}
	(1-p-q)^{n-k}\left[ (1-p-q)^{-k} - (1 +O(n^{-1}))
	\left( 1 + \frac{pq}{1-p-q} \right)^{n-k}
\right]
\nonumber \\
	&\leq \frac{n^{2k-2}}{(k-1)!^2} \exp \left(  (n-k) \log (1-p-q) \right) 
	\times [ (n-k) pq + O(r^d) + O(n^{-1})]
\nonumber \\
	&\leq \frac{n^{2k-1}}{(k-1)!^2} e^{-np} (k \fmax \theta)^2r^{2d}  e^{-nq}(1+ o(1)).
	\label{0730a}
	\end{align}
By Lemma \ref{l:bdy_est}, for all large enough $n$ and all $\by \in A^k$
we have $q=p_\by \geq  (f_0 \theta/3) r^d$.  Multiplying the 
expression at (\ref{0730a})  by $h^*_r(\bx)h^*_r(\by)$ and integrating over
$\by \in (A
\setminus B_{2r}(\bx))^k$ yields an expression bounded by
\bean
c' n^{k} (nr^d)^{k-1} \exp(- (f_0 \theta/3) nr^d) \exp(-n p_\bx) r^{2d} h^*_r(\bx),
\eean
for some constant $c'$ independent of $n$.
Recalling $I_{n,k} = \frac{n^k}{(k-1)!} \int e^{-n p_\bx} h^*_r(\bx) \nu^k 
(d\bx)$, we find that the absolute value of the
first term in the right hand side of (\ref{0729e}), multiplied
by $n^2$, is bounded above by
\bean
c' (k-1)!
(nr^d)^{k+1} I_{n,k} \exp(- (f_0 \theta/3) n r^d)
= O(I_{n,k} \exp(-(f_0 \theta /4) n r^d)).
\eean

We turn to the second term in the right hand side of (\ref{0729e}).
By the bound $1-t \leq e^{-t}$, for large $n$ this term, multiplied
by $n^2$, is bounded by 
$$
\frac{2 n^{2k}}{(k-1)!^2} \int_{A^k}
\int_{(A \setminus B_r(\bx))^k \setminus (A \setminus B_{2r}(\bx))^k}
h_r^*(\bx) h_r^*(\by) \exp(-n \nu(B_r(\bx,\by))) \nu^k(d\bx) \nu^k(d\by).
$$
By
the multivariate Mecke formula, the last 
displayed expression  equals
 twice the expected number of ordered pairs $(\varphi,\psi)$
of distinct $k$-clusters
of $G(\Po_n,r)$ with $\dist(\varphi,\psi) < 2r$. 
Hence by a further application of the multivariate Mecke formula,
it is precisely equal to $2J_{1,n}$, where $J_{1,n}$
was defined at (\ref{e:J1def}). Therefore by Lemma \ref{l:J1},
the second term in the right hand side of (\ref{0729e}), multiplied
by $n^2$, is  $O(e^{-cnr^d} I_{n,k})$ for some $c>0$.

We turn to the last term in the right hand side of (\ref{0729e}).
For large enough $n$, by Lemma \ref{l:bdy_est} we have
$p_\by \geq f_0 (\theta/3)  r^d$ and also
$(1-p_\bx)^{-k}(1-p_\by)^{-k} \leq 2$
for all $\bx,\by \in A$. Then
\begin{align*}
	&	n^{2 + 2(k-1)}
	\int_{A^k} \int_{A^k \setminus (A \setminus B_{2r}(\bx))^k} 
	h^*_r(\bx) h^*_r(\by)
	(1-p_\bx)^{n-k}(1-p_\by)^{n-k} \nu^k(d\by) \nu^k(d \bx) 
	\\
	& \leq c n^{2k} r^{dk} 
	\int_{A^k} h^*_r(\bx) e^{-np_\bx}
	e^{-  (f_0 \theta/3) n r^d}\nu^k(d\bx) 
\\
	& = I_{n,k} \times O((nr^d)^{k} e^{-f_0  ( \theta /3)nr^d}), 
\end{align*}
which is $O(I_{n,k} \exp(- (f_0 \theta/4) nr^d))$.
Thus there exists $c>0$ such that all terms in the right hand side
of (\ref{0729e}), multiplied by $n^2$, are
$O( e^{-c nr^d} I_{n,k})$. Therefore the right hand side of (\ref{0729b})
is $O( e^{-c nr^d} I_{n,k})$, as required.
\end{proof}

\subsection{LLNs and concentration results for $S_{n,k}$ and $S'_{n,k}$}

A sequence of random variables $(\xi_n)_{n\in \N }$ is said to
{\em converge  completely} to a constant $C$ (written
$\xi_n \tocc C$) as $n \to \infty$ if $\PP[|\xi_n-C|>\ep]$ is summable in $n$
for any $\ep>0$. In particular, 
complete convergence implies almost sure convergence for any sequence  of random variables defined on the same probability space.  We
now prove $S_{n,k}/I_{n,k} \tocc 1$ and 
$S'_{n,k}/I_{n,k} \tocc 1$ under an extra condition on $r=r_n$, namely
\bea
\limsup_{n \to \infty} ( \theta n r_n^d/(\log n)) < 1/(2 f_0).
\label{e:xsublog}
\eea
Using our results about $I_{n,k}$, we can then give a sequence
of constants that are almost surely asymptotic 
to $S_{n,k}$ (in the uniform case) or to $\log(S_{n,k}/n)$ (in the
general case).

\begin{theorem}[Concentration results for $S_{n,k}$ and $S'_{n,k}$]
        \label{t:non-uniform}
        Suppose that $r =r_n$ satisfies 
	(\ref{e:xsublog}) as well as (\ref{e:supcri3}). 
	Given $\eps >0$, there exist $\delta, n_1 \in (0,\infty)$
        such that for all $n \geq n_1$,
	\begin{align}
	\Pr [ |(S_{n,k}/I_{n,k}) -1| > \eps] \leq \exp(-n^{\delta});
		\label{e:concB}
		\\
	\Pr [ |(S'_{n,k}/I_{n,k}) -1| > \eps] \leq \exp(-n^{\delta}).
		\label{e:concP}
		\end{align}
        In particular, as $n \to \infty$ we have the complete convergence
	$ (S_{n,k}/I_{n,k}) \tocc 1, $ and also
        \bea
	(nr^d)^{-1} \log (S_{n,k} /n) \tocc - \theta f_0,
        \label{e:LLNlogK}
        \eea
	and likewise for $S'_{n,k}$ (as $n \to \infty$ through $\N$).
	Finally, in the uniform case,
	\bea
	n^{-1} e^{ f_0 \theta n r^d } (nr^d)^{(k-1)(d-1)} S_{n,k}
	\tocc k^{-1} f_0^{(1-k)(d-1)} \alpha_k, 
	\label{e:Snccunif}
	\eea
	and likewise for $S'_{n,k}$.
\end{theorem}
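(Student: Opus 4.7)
The plan is to establish the exponential concentration bounds \eqref{e:concB} and \eqref{e:concP} first; the complete convergences, as well as \eqref{e:LLNlogK} and \eqref{e:Snccunif}, then follow by routine arguments combining those bounds with Proposition \ref{p:Snmeanbin}, Theorem \ref{t:Inonunif}, and Theorem \ref{t:Ilim}. The decisive quantitative input is that under \eqref{e:xsublog}, Lemma \ref{l:Ilower} yields some $\gamma>0$ with $I_{n,k} \geq n^{1/2 + \gamma}$ for all large $n$; combined with $nr^d = O(\log n)$, this polynomial lower bound ensures that all the concentration exponents we obtain grow as a fractional power of $n$.

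For the Poisson bound \eqref{e:concP}, I would apply a Bernstein-type concentration inequality for Poisson functionals (of Reitzner--Wu--Houdr\'e--Privault type): when $|D_x F|:=|F(\omega\cup\{x\})-F(\omega)| \leq K$ almost surely, one has $\Pr[|F-\E F|\geq t] \leq 2\exp(-ct^2/(\E F + Kt))$. Adding a point $x$ to $\Po_n$ can create at most one new $k$-cluster and destroy at most one $k$-cluster per distinct component of $G(\Po_n,r)$ meeting $B_r(x)$, so that $|D_x S'_{n,k}| \leq 1+|\Po_n \cap B_r(x)|$. A uniform bound is obtained by restricting to the event $E_n$ that every $B_r(y)$ with $y\in A$ contains at most $K:=n^{\gamma/2}$ points of $\Po_n$; Chernoff's bound for Poisson, combined with a union bound over an $r$-net of $A$ of cardinality $O(n^d)$, gives $\Pr[E_n^c] \leq \exp(-c n^{\gamma/2})$ for large $n$. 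Applying the Bernstein bound to a Poisson functional coinciding with $S'_{n,k}$ on $E_n$ yields, at $t=\eps I_{n,k}$, an exponent of order $\eps I_{n,k}/K \geq n^{1/2+\gamma/2}$; absorbing $\Pr[E_n^c]$ then gives \eqref{e:concP} with $\delta$ any fixed value in $(0,\gamma/2)$.

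For the binomial bound \eqref{e:concB} I would use the bounded differences method. Replacing one $X_i$ by an independent copy $X_i'$ alters $S_{n,k}$ by a quantity bounded by a constant multiple of $|\cX_n \cap B_r(X_i)| + |\cX_n \cap B_r(X_i')|$, since any $k$-cluster appearing in only one of the two configurations must meet $B_r(X_i)\cup B_r(X_i')$. On the high-probability event that all such ball counts are at most $K := n^{\gamma/2}$, the bounded-differences constants are uniformly $O(K)$; a version of McDiarmid's inequality permitting only ``typically bounded'' differences (Warnke), or equivalently an explicit truncation of $S_{n,k}$ to a modified functional, then gives a deviation probability of order $\exp(-\eps^2 I_{n,k}^2/(nK^2)) \leq \exp(-n^{\gamma})$ at level $t=\eps I_{n,k}$; combining with the tail of the truncation event and with Proposition \ref{p:Snmeanbin} (which gives $\E S_{n,k} = I_{n,k}(1+o(1))$) yields \eqref{e:concB}. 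I expect the rigorous handling of this truncation to be the main technical point, though the required inequalities are by now standard.

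The remaining assertions follow easily. The complete convergences $S_{n,k}/I_{n,k}\tocc 1$ and $S'_{n,k}/I_{n,k}\tocc 1$ are immediate from \eqref{e:concB} and \eqref{e:concP} since $\sum_n e^{-n^\delta}<\infty$; in the uniform case, combining with Theorem \ref{t:Ilim} gives \eqref{e:Snccunif} for both $S_{n,k}$ and $S'_{n,k}$. For \eqref{e:LLNlogK}, on the summably-rare event $\{S_{n,k}/I_{n,k}\notin[1/2,2]\}$ nothing needs to be done, while on its complement $(nr^d)^{-1}|\log(S_{n,k}/I_{n,k})| \leq (\log 2)/(nr^d) \to 0$, so \eqref{e:LLNlogK} reduces to Theorem \ref{t:Inonunif}; the Poisson version is identical.
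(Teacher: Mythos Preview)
Your overall strategy (concentration for the cluster count via a bounded-differences/Bernstein-type inequality, then deducing the remaining conclusions from Proposition~\ref{p:Snmeanbin}, Theorem~\ref{t:Inonunif} and Theorem~\ref{t:Ilim}) is the same as the paper's, and your treatment of the final assertions is fine. The substantive difference is that you miss a simple packing observation, and this is precisely what forces the truncation step you correctly flag as the main technical gap.

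The missing observation is this: distinct components of $G(\cX,r)$ are at mutual distance strictly greater than $r$, so any ball $B_r(x)$ can meet at most $\kappa(d)$ of them, where $\kappa(d)$ is the maximum number of points in $B_r(x)$ at pairwise distance exceeding $r$ (a packing constant depending only on $d$). Hence adding or removing a single point, or replacing one point by another, changes $S_{n,k}$ or $S'_{n,k}$ by at most a constant depending only on $d$ and $k$: your bound $|D_x S'_{n,k}|\le 1+|\Po_n\cap B_r(x)|$ is correct but much too crude. With $|D_xS'_{n,k}|\le c(d)$ almost surely, no truncation is needed at all, and your Bernstein/McDiarmid route goes through cleanly. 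As written, however, the phrase ``applying the Bernstein bound to a Poisson functional coinciding with $S'_{n,k}$ on $E_n$'' hides a real issue, since adding a point can move the configuration in or out of $E_n$, so the modified functional need not have uniformly bounded add-one cost; you would need a genuine ``typically bounded differences'' inequality (Warnke for the binomial case, Bachmann--Peccati or similar for the Poisson case), not the plain versions.

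For comparison, the paper exploits the same packing fact slightly differently: it partitions $\R^d$ into cubes of side $r$ and observes that resampling all Poisson points in a single cube alters $S'_{n,k}$ by at most a constant. This gives a martingale with $m_n=O(r^{-d})$ increments bounded by $2c'$, so Azuma yields an exponent of order $(\eps I_{n,k})^2 r^d$, which under \eqref{e:xsublog} is $\Omega(n^\delta)$. For $S_{n,k}$ the same argument is run with the filtration $\cF_\ell=\sigma(X_i:i\le\ell)$ and $n$ bounded increments. This is marginally cleaner than invoking a Poisson-functional Bernstein inequality, but once you use the packing bound the two arguments are essentially equivalent.
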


\begin{remark}
	If (\ref{e:supcri3}) holds but (\ref{e:xsublog}) does not,
	then as mentioned in Section \ref{secintro}, we
	still have $S_{n,k}/I_{n,k} \toP 1$, and (\ref{e:LLNlogK})
	and (\ref{e:Snccunif})
	with convergence in probability.

When $f_0=f_1$, the upper bound in (\ref{e:supcri3}) is $f_0^{-1} \min(2/d,1)$,
so (\ref{e:xsublog}) is a stronger condition than the upper bound in
	(\ref{e:supcri3}) when $d \leq 3$ (but not when $d \geq 4$).
\end{remark}

\begin{proof}[\it Proof of Theorem \ref{t:non-uniform}.]
By definition
	$\EE[S'_{n,k}] =  I_{n,k}$. 
Hence by Lemma  \ref{l:Ilower},
	 given any $\ep>0$ and $f_0^+> f_0$, for $n$ large enough, 
\begin{align*}
	\PP[ |S'_{n,k} - I_{n,k}| \ge \ep I_{n,k}] 
	\leq \Pr[|S'_{n,k}- \EE[S'_{n,k}]|
	\geq n e^{-  f_0^+ \theta nr^d}].
\end{align*}

	Partition $\RR^d$ into cubes of side length $r$. Then the number of 
	$k$-clusters intersecting a fixed cube in the partition is bounded by a constant $c'$ depending only on $d$. Therefore, removing all the Poisson points in any fixed cube of the partition reduces or increases the total number of
	$k$-clusters by at most a constant depending only on $d$.  

	Enumerate the cubes in the partition that intersect
	$A$ as  $C_1,...,C_{m_n}$, where $m_n = O(r^{-d})$ as
	$n \to \infty$.
	For each $\ell\in [m_n]$,  let $\mathcal F_\ell$ be the sigma-algebra generated by $\Po_n\cap (\cup_{i=1}^\ell C_i)$. Set $D_i = \EE[S'_{n,k}|\mathcal F_i]-\EE[S'_{n,k}|\mathcal F_{i-1}]$ for $i\in[m_n]$.
	Then $S'_{n,k}$ is written as the sum of martingale differences
	$S'_{n,k}-\EE[S'_{n,k}]=\sum_{i=1}^{m_n} D_i$. To see why $|D_i|$ is bounded from above by a finite constant, we re-sample the Poisson points in $C_i$ from an 
	independent copy $\Po'_n$ defined on the same probability space. By the
	superposition theorem \cite{LP18},  
	recalling that $K_{k,r}(\X)$ is the number of $k$-clusters
	of  $G(\X,r)$, we have that
\begin{align*}
	D_i = \EE[K_{k,r}(\Po_n)  - K_{k,r}((\Po'_n \cap C_i) \cup
	(\Po_n\setminus C_i))  |\mathcal F_i].
\end{align*}
Hence $|D_i|\le 2c'$ uniformly in $i\in[m_n]$, as discussed in the last
	paragraph.  From here, it follows easily by an application of
	Azuma's inequality \cite[page 33]{Pen03} that  for all $n$ large
\begin{align*}
	\PP[ |S'_{n,k} - \EE[S'_{n,k}]|\ge
	  n e^{-nf_0^+\theta r^d} ] 
	&\le 2\exp \left( -   \frac{ n^2  e^{-2 nf_0^+\theta r^d}
	}{
		8 (c')^{2} m_n   } \right).
\end{align*} 
	By the assumption (\ref{e:xsublog}),
	we may assume $f_0^+$ is chosen so that
	$\limsup_{n \to \infty} n \theta f_0^+ r^d/(\log n) < 1/2$.
	Using this we can find $\de >0$ and $c>0$
	such that for $n$ large,
	the last bound is at most $2 \exp(-c'' n^\delta)$.
	This gives us the result (\ref{e:concP}).

Now consider $S_{n,k}$ (i.e.  the binomial case).
	By Proposition \ref{p:Snmeanbin} we have  for some $c>0$ that
	$
	\EE[S_{n,k}]= I_{n,k} (1+O(e^{-c n r^d})),$
and hence for $n$ large,
\begin{align*}
	\PP[ |S_{n,k} - I_{n,k}| \ge \ep I_{n,k}] & \le
	\PP[ |S_{n,k}- \EE[S_{n,k}]| \ge  \ep I_{n,k} /2]
	\\
	& \leq \Pr[|S_{n,k}- \EE[S_{n,k}]|
	\geq n e^{- n f_0^+ \theta r^d}].
\end{align*}
	  Then (\ref{e:concB}) 
	 can be proved in the same manner as (\ref{e:concP})
	 with the filtration now given
	 by $\cF_\ell = \sigma(X_i: i\le \ell)$ and $D_i = 
	\EE[K_{k,r}(\cX_n) - K_{k,r}(\cX_{n+1}\setminus\{X_i\})|\cF_i]$.
	We omit the details. 

	It is immediate from (\ref{e:concB}) and (\ref{e:concP}) that
	$ (S_{n,k}/I_{n,k}) \tocc 1,  $
	and
	$ (S'_{n,k}/I_{n,k}) \tocc 1.  $

	For (\ref{e:LLNlogK}), note that 
	since $(S_{n,k}/I_{n,k}) \tocc 1$ and $nr^d \to \infty$, 
	$(nr^d)^{-1} \log (S_{n,k}/I_{n,k}) \tocc 0$,
	and thus by Theorem \ref{t:Inonunif},
	$$
	(n r^d)^{-1} \log (S_{n,k}/n) = (nr^d)^{-1} \log(S_{n,k}/I_{n,k}) +
	(n r^d)^{-1} \log (I_{n,k}/n) \tocc - \theta f_0,
	$$
	which is (\ref{e:LLNlogK}). We obtain a similar result for
	$S'_{n,k} $ using the fact that $(S'_{n,k}/I_{n,k}) \tocc 1$.

	In the uniform case,
	we obtain (\ref{e:Snccunif}) using (\ref{e:Ilim}). 
\end{proof}
\section{Asymptotic distribution of the $k$-cluster count}
To present Poisson and normal approximation results for $S_{n,k}$
and $S'_{n,k}$, we recall three notions of distance
which measure the proximity between the distributions
of real-valued random variables $X,Y$. 
The Kolmogorov distance is defined by
\begin{align*}
\dk(X,Y) := \sup_{z\in\RR} |\PP[X\le z] - \PP[Y\le z]|
\end{align*}
The Wasserstein-1 distance is defined by
\begin{align*}
\dw(X,Y):= \sup_{h\in\mathrm{Lip}_1} |\EE[h(X)] - \EE[h(Y)]|,
\end{align*}
where $\mathrm{Lip}_1$ denotes the family of Lipschitz continuous functions with Lipschitz constant at most one. The total variation distance is defined by
\begin{align*}
\dtv(X,Y) := \sup_{A\in \mathcal B(\RR)} |\PP[X\in A] - \PP[Y\in A]|,
\end{align*}
where the supremum is taken over all Borel measurable subsets of $\RR$.
If $X,Y $ both take values in $\Z$, then  this is the
same as $(1/2)\sum_{n \in \Z}| \Pr[X = n]- \Pr[Y =n]|$. 
Convergence in any of the three 
aforementioned distances implies convergence in distribution.

In this section we prove the following result, concerning the asymptotic
distributions of $S_{n,k}$ (the number of $k$-clusters in $G(\cX_n,r)$) and
	 $S'_{n,k}$  (the number of $k$-clusters in $G(\Po_n, r)$), 
	in the mildly dense regime.
	Recall from (\ref{e:exp_Snk})
	that $\E[S'_{n,k}]= I_{n,k}$.
	Recall that $Z_t$ denotes a Poisson variable with mean $t$,
	and let $\cN$ denote a standard normal random variable.
\begin{theorem}
	[Distributional results on the $k$-cluster count]
\label{t:int_k} 
	Suppose that $r = r(n)$ satisfies \eqref{e:supcri3}, and 
set $b:= \limsup_{n \to \infty} (n \theta r^d/\log n) $.  Let $k\in\NN$.
	There exists a finite $c>0$ such that for  all large enough $n$,
\begin{align}
	\dtv(S'_{n,k}, Z_{I_{n,k}})
	\le e^{- c n r^d},
	\label{e:k_pois}
\end{align}
	and if $d \geq 2$ then for all large enough $n$,
\begin{align}
	\dtv(S_{n,k}, Z_{\E[S_{n,k}]}) \le e^{- c n r^d},
	\label{e:k_bin}
\end{align}
	Moreover (for all $d$), given $\eps >0$ we have 
\begin{align}
\label{e:CLT_dk}
	\dk( \Var[S'_{n,k}]^{-1/2}(S'_{n,k}-I_{n,k}), \cN) 
	=O(n^{\eps +(bf_0  - 1)/2}); 
\end{align}
and 
	\begin{align}
\dw( \Var[S_{n,k}]^{-1/2}(S_{n,k}-\EE[S_{n,k}]), \cN)
		= O(n^{\eps + (3bf_0 - 1)/2}),
	\label{e:CLTBi}
\end{align}
	Also if $ d \geq 2$ then there exists $c >0$ such that
	for all large enough $n$,
\begin{align}
	\dk( \Var[S_{n,k}]^{-1/2}(S_{n,k}- \E[S_{n,k}]), \cN)
	\leq e^{-cnr^d},
	\label{e:dKs}
\end{align}
	and a similar result holds for $S'_{n,k}$ for all $d \geq 1$.
\end{theorem}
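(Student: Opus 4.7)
The plan is to combine the Poisson approximation bounds \eqref{e:k_pois}--\eqref{e:k_bin} already obtained with a classical quantitative CLT for Poisson random variables. Write $\mu_n := \E[S_{n,k}]$, $\sigma_n^2 := \Var[S_{n,k}]$, and set
\[
T_n := \sigma_n^{-1}(S_{n,k} - \mu_n), \qquad W_n := \sigma_n^{-1}(Z_{\mu_n} - \mu_n).
\]
Introducing the intermediate variable $W_n$ and applying the triangle inequality yields
\[
\dk(T_n, \cN) \leq \dk(T_n, W_n) + \dk(W_n, \cN),
\]
and I plan to bound each summand separately, the first by Poisson approximation, the second by a Berry--Esseen estimate for the Poisson law.

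For the first term, Kolmogorov distance is invariant under a common affine transformation with positive scale, and is always dominated by total variation distance, so $\dk(T_n, W_n) = \dk(S_{n,k}, Z_{\mu_n}) \leq \dtv(S_{n,k}, Z_{\mu_n})$, and the latter is $\leq e^{-cnr^d}$ by \eqref{e:k_bin} when $d \geq 2$. For the second term, by representing a Poisson variable of integer mean as a sum of i.i.d.\ Poisson$(1)$ variables (and handling non-integer means via infinite divisibility), classical Berry--Esseen gives
\[
\dk\bigl((Z_{\mu_n} - \mu_n)/\sqrt{\mu_n},\ \cN\bigr) = O(\mu_n^{-1/2}).
\]
To pass from this to a bound on $\dk(W_n,\cN)$, I set $a_n := \sqrt{\mu_n}/\sigma_n$, so $W_n = a_n\,(Z_{\mu_n}-\mu_n)/\sqrt{\mu_n}$, and estimate
\[
\dk(W_n, \cN) \leq \dk\bigl((Z_{\mu_n}-\mu_n)/\sqrt{\mu_n},\ \cN\bigr) + \sup_{u \in \R} |\Phi(u) - \Phi(a_n u)|.
\]
By Proposition \ref{p:var_iso_bin}, $a_n = 1 + O(e^{-cnr^d})$, and a mean-value-theorem estimate using the boundedness of $t \mapsto t\phi(t)$ (where $\phi$ is the standard normal density) gives $\sup_u|\Phi(u) - \Phi(a_n u)| = O(|a_n - 1|)$, so $\dk(W_n, \cN) = O(\mu_n^{-1/2} + e^{-cnr^d})$.

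The step remaining is to show that $\mu_n^{-1/2}$ is itself $O(e^{-c'nr^d})$ for some $c' > 0$; this is where the upper bound in \eqref{e:supcri3} becomes essential. By Proposition \ref{p:Snmeanbin} and Lemma \ref{l:Ilower}, for any $f_0^+ > f_0$ and all large $n$ we have $\mu_n \geq (1/2)\, n\, e^{-f_0^+ \theta n r^d}$; by \eqref{e:supcri3} there exists $b^+$ with $b^+ f_0 < 1$ and $n\theta r^d \leq b^+ \log n$ for large $n$. Choosing $f_0^+$ close enough to $f_0$ so that $b^+ f_0^+ < 1$, one obtains
\[
\tfrac12 \log \mu_n \geq \tfrac12 \log(n/2) - \tfrac12 f_0^+ \theta n r^d \geq c' n r^d
\]
for some $c' > 0$, i.e.\ $\mu_n^{-1/2} \leq e^{-c'nr^d}$. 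Combining all three contributions and absorbing multiplicative constants (possible since $nr^d \to \infty$) gives the desired bound $\dk(T_n, \cN) \leq e^{-c'' n r^d}$ for some $c'' > 0$ and all large $n$. The proof for $S'_{n,k}$ is identical with \eqref{e:k_pois} and Proposition \ref{p:var_k} in place of their binomial analogues, and works for all $d \geq 1$ since \eqref{e:k_pois} has no dimension restriction. The main obstacle is precisely this final balancing: one needs the Berry--Esseen error $\mu_n^{-1/2}$ to decay at an exponential-in-$nr^d$ rate, which is possible only because \eqref{e:supcri3} forces $bf_0$ to stay strictly below $1$ and thereby leaves a quantitative gap that can be converted into a positive constant $c'$.
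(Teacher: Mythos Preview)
Your proposal is correct and follows essentially the same approach as the paper: combine the Poisson approximation bound \eqref{e:k_bin} (or \eqref{e:k_pois}) with a Berry--Esseen estimate for the Poisson law, then correct the normalization from $\sqrt{\mu_n}$ to $\sigma_n$ using Proposition~\ref{p:var_iso_bin} (or \ref{p:var_k}), and finally use Lemma~\ref{l:Ilower} together with the upper bound in \eqref{e:supcri3} to show $\mu_n^{-1/2} = O(e^{-c'nr^d})$. The only cosmetic difference is that the paper first normalizes by $\sqrt{\mu_n}$ and corrects to $\sigma_n$ at the end, whereas you normalize by $\sigma_n$ from the start and absorb the correction into the Berry--Esseen step; both orderings are fine.
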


\begin{remark}
\begin{enumerate}
%
\item
		The normal approximation (\ref{e:dKs})
		(and the corresponding result for $S'_{n,k}$) will be proved 
	via Poisson approximation, while  
		(\ref{e:CLT_dk}) and 
	(\ref{e:CLTBi}) will be obtained by approximating 
		$S_{n,k}$ or $S'_{n,k}$ by directly by a normal random variable.
		It is of interest to compare the rates of normal convergence
		in these results with one another.

When $b=0$, the rates are better in (\ref{e:CLT_dk}) and (\ref{e:CLTBi}) than
		in (\ref{e:dKs}) since $e^{-cnr^d}$ decays more slowly than
		$n^{-\eps}$ for any $\eps >0$.
	To fully compare  when $b>0$, we would need to optimize the value of
		$c$ in (\ref{e:dKs}), which is beyond the scope of this paper.

	\item  
		We conjecture that the rates in (\ref{e:CLT_dk}), and in 
(\ref{e:CLTBi}) for $b=0$, are optimal up to a factor of $n^\eps$.
		When $b>0$ the rate of convergence given
		for $S_{n,k}$ in (\ref{e:CLTBi})
		is worse than the rate given for $S'_{n,k}$ in
		(\ref{e:CLT_dk}). We expect that in this
		case the rate in (\ref{e:CLTBi}) is sub-optimal.

	\item
		When $bf_0 \geq 1/3$, (\ref{e:CLTBi}) does not
		give a CLT for $S_{n,k}$ at all. However,
		provided $d \geq 2$ and (\ref{e:supcri3})
		holds (which amounts to $bf_0 < 2/d$ when $f_0=f_1$), 
		we do still have a CLT for $S_{n,k}$
		(possibly with a sub-optimal rate of convergence) by
		(\ref{e:dKs}).

	\item
		In (\ref{e:CLTBi}) we found it more convenient
		to use use $\dw$ rather than $\dk$. The proof
		of (\ref{e:CLTBi}) is based on a general
		result on normal approximation (in $\dw$)
		for stabilizing functionals of binomial point processes
		(Lemma \ref{l:fromChat}) which is itself based on a result in
		\cite{Cha08}. It might be possible to obtain a similar
		result with  $\dk$ instead of $\dw$
		by utilizing \cite{LrP17} instead of \cite{Cha08}.

	\item
		A result along the lines of (\ref{e:CLT_dk})
		(without any rate of convergence)
		is proved in \cite{Pen18} for a class of {\em soft}
		RGGs where the probability of two vertices
		being connected given their locations at $x,y$ say,
		is some function of $x$ and $y$ (called the
		{\em connection function}), rather than being
		$\1\{\|y-x\| \leq r\}$ as in the graphs considered here.
		However, the result in \cite{Pen18} requires
		the connection function to be bounded away from 1,
		so the result there does not cover the RGGs that
		we consider here.
\end{enumerate}
\end{remark}

\subsection{Poisson approximation for $S'_{n,k}$}
\label{s:PoSprime}

Let $\bN(\R^d)$ be the space of all finite subsets of $\R^d$, equipped with
the smallest $\sigma$-algebra ${\cal S}(\R^d)$ containing the
sets $\{\cX \in \bN(\R^d): |\cX \cap B|= m\}$ for all Borel $B \subset \R^d$
and all $m \in \N \cup \{0\}$. Given $m \in \N$, let $\bN_m(\R^d):= \{\X \in
\bN(\R^d):|\X|=m\}$.

Our main tool for proving the Poisson approximation result (\ref{e:k_pois}) in
Theorem \ref{t:int_k} is the following coupling bound in \cite{Pen18} adapted to our situation (i.e. without marking). The function  $g$ in the next result has
nothing to do with the $g$ in Section \ref{s:Boolean}.

\begin{lemma}[{\cite[Theorem 3.1]{Pen18}}]
\label{t:JAP}
	Let $g: \bN_k(\R^d)
	\times \mathbf{N}(\RR^d)\to \{0,1\}$  be measurable.
	Define  
\begin{align*}
W:=F(\Po_n) := \sum_{\psi\subset \Po_n, |\psi|=k} g(\psi, \Po_n\setminus \psi).
\end{align*}
Let $n >0, k \in \N$.
	For $\bx=(x_1,...,x_k)\in(\RR^d)^k$ with distinct entries,
	set $p(\bx):= 
	\EE[g(\{x_1,\ldots,x_k\},\Po_n)]$ and set $\mu = n \nu$.
	Assume that for $\mu^k$-a.e. $\bx$ with $p(\bx)>0$, we can find coupled random variables $U_\bx, V_\bx$ such that 
\begin{itemize}
\item $\mathscr L(U_\bx) = \mathscr L(W)$;
\item $\mathscr L(1+ V_\bx) = \mathscr L(F(\Po_n\cup\{x_1,...,x_k\})| g
	(\{x_1,\ldots,x_k\},\Po_n)=1)$.
\end{itemize}
Then 
\begin{align}
	\dtv(W, Z_{\EE[W]}
	) \le \frac{\min(1, \EE[W]^{-1})}{k!}\int \EE[|U_\bx-V_\bx|] p(\bx)\mu^k(d\bx).
	\label{0730b}
\end{align}
\end{lemma}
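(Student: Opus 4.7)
The plan is to combine Stein's method for Poisson approximation with the multivariate Mecke formula for $\Po_n$. The coupling $(U_\bx,V_\bx)$ may be viewed as a Palm version of the classical size-bias coupling underlying Chen--Stein bounds for sums of indicators.

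Set $\lambda := \E[W]$. For each $A \subset \Z_+$, the Stein equation
$$\lambda f_A(j+1) - j f_A(j) = \1_A(j) - \Pr[Z_\lambda \in A]$$
admits a bounded solution $f_A$ with first-difference estimate $\sup_j |f_A(j+1)-f_A(j)| \le \min(1,\lambda^{-1})$, by the classical Barbour--Eagleson bound. Since
$\dtv(W,Z_\lambda) = \sup_A |\E[\lambda f_A(W+1) - W f_A(W)]|$,
it suffices to prove
$$\bigl|\E[\lambda f(W+1) - W f(W)]\bigr| \le \frac{\|\Delta f\|_\infty}{k!} \int \E|U_\bx - V_\bx|\,p(\bx)\,\mu^k(d\bx)$$
for every $f:\Z_+\to\R$ with $\|\Delta f\|_\infty := \sup_j |f(j+1)-f(j)| < \infty$.

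Next, apply the multivariate Mecke formula to the sum $W = \sum_{\psi \subset \Po_n,\,|\psi|=k} g(\psi,\Po_n\setminus\psi)$, obtaining
$$\E[W f(W)] = \frac{1}{k!}\int \E\bigl[g(\bx,\Po_n)\, f(F(\Po_n \cup \{x_1,\ldots,x_k\}))\bigr]\,\mu^k(d\bx).$$
On the event $\{g(\bx,\Po_n)=1\}$ the added $k$-tuple itself contributes exactly $1$ to $F(\Po_n\cup\bx)$, so by the defining property of $V_\bx$ the inner expectation equals $p(\bx)\E[f(1+V_\bx)]$. A further application of Mecke to $\E[W]$ gives $\lambda = k!^{-1}\int p(\bx)\mu^k(d\bx)$, whence, using $\LL(U_\bx) = \LL(W)$,
$$\lambda \E[f(W+1)] = \frac{1}{k!}\int p(\bx)\E[f(1+U_\bx)]\,\mu^k(d\bx).$$
Subtracting the two identities and applying the pointwise estimate $|f(1+U_\bx) - f(1+V_\bx)| \le \|\Delta f\|_\infty |U_\bx - V_\bx|$ under the coupling yields the required bound, and combining with the Stein bound on $\|\Delta f_A\|_\infty$ gives the lemma.

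The only genuine subtlety lies in the identification $F(\Po_n\cup\bx) - 1 \overset{\LL}{=} V_\bx$ conditionally on $\{g(\bx,\Po_n)=1\}$. One must check that after inserting $\bx$ the term indexed by the $k$-tuple $\{x_1,\ldots,x_k\}$ itself contributes exactly $1$ (which requires the points $x_1,\ldots,x_k$ to be distinct, a $\mu^k$-a.s.\ condition since $\mu$ has a density), and that the remaining terms are precisely the sum whose conditional law defines $V_\bx$. This is exactly what the hypothesis on the coupling asserts, so the proof reduces to the two Mecke identities above combined with Stein's bound.
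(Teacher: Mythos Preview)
The paper does not prove this lemma; it is quoted verbatim from \cite[Theorem 3.1]{Pen18} and used as a black box. Your argument is correct and is exactly the standard Chen--Stein coupling proof specialised to Poisson point processes via the multivariate Mecke formula, which is the same method underlying the cited result. One minor remark: your final paragraph about the ``genuine subtlety'' is unnecessary, since the identity $\LL(1+V_\bx)=\LL(F(\Po_n\cup\bx)\mid g(\bx,\Po_n)=1)$ is literally the hypothesis, so there is nothing further to verify; the proof is complete once you subtract the two Mecke identities and apply the Lipschitz bound on $f$.
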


We restate the first Poisson approximation result (\ref{e:k_pois}) 
from Theorem
\ref{t:int_k} in the following proposition. 
Recall that $Z_t$ denotes a Poisson variable with mean $t$.

\begin{proposition}
\label{p:poi}
	Let $k \in \N$. Then
there exist finite constants $c, n_0 >0$ such that
$$
		\dtv(S'_{n,k}, Z_{I_{n,k}})  \leq e^{-c n r^d},
	~~~~~ \forall ~ n \geq n_0.
	$$
\end{proposition}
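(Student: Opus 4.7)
The plan is to apply Lemma \ref{t:JAP} with $g(\psi, \X) := h_r(\psi) \1\{\X \cap B_r(\psi) = \emptyset\}$, which gives $W = S'_{n,k}$, $p(\bx) = h_r(\bx) \exp(-n \nu(B_r(\bx)))$, and $\int p(\bx) \mu^k(d\bx) = k! I_{n,k}$ by (\ref{e:exp_Snk}). Since $I_{n,k} \to \infty$ by Lemma \ref{l:Ilower}, the prefactor $\min(1, I_{n,k}^{-1})$ in (\ref{0730b}) equals $I_{n,k}^{-1}$ for all large $n$. It therefore suffices to exhibit a coupling for which $\E[|U_\bx - V_\bx|] = O((nr^d)^k \exp(-c_0 n r^d))$ uniformly in $\bx \in A^k$ with $h_r(\bx) = 1$, for some $c_0 > 0$; the resulting bound then reduces to $e^{-cnr^d}$ for any $c \in (0, c_0)$ and $n$ large, since the polynomial prefactor is absorbed into the exponential.

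For such $\bx$, I take $U_\bx := F(\Po_n) = S'_{n,k}$ and $V_\bx := F(\hat{\Po}_n) - 1$, where $\hat{\Po}_n := (\Po_n \cap (A \setminus B_r(\bx))) \cup \{x_1, \ldots, x_k\}$. Independence of $\Po_n$ on disjoint Borel sets ensures that the unconditional distribution of $\hat{\Po}_n$ matches the distribution of $\Po_n \cup \{x_1, \ldots, x_k\}$ conditioned on $\Po_n \cap B_r(\bx) = \emptyset$, which is precisely the requirement of Lemma \ref{t:JAP}. The key geometric point is that $\{x_1, \ldots, x_k\}$ forms an isolated $k$-cluster of $G(\hat{\Po}_n, r)$: it is connected because $h_r(\bx) = 1$, and any vertex of $\Po_n|_{A \setminus B_r(\bx)}$ within distance $r$ of $\bx$ would have to lie in $B_r(\bx)$, a contradiction. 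Consequently $F(\hat{\Po}_n) = F(\Po_n|_{A \setminus B_r(\bx)}) + 1$, and so $|U_\bx - V_\bx| = |F(\Po_n) - F(\Po_n|_{A \setminus B_r(\bx)})|$.

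To estimate this difference, I observe that any $k$-cluster of $G(\Po_n, r)$ disjoint from $B_{2r}(\bx)$ remains a $k$-cluster of $G(\Po_n|_{A \setminus B_r(\bx)}, r)$, and conversely, because such a cluster has no vertex within $r$ of $B_r(\bx)$ and is therefore unaffected by the removal of $\Po_n \cap B_r(\bx)$. Hence $|U_\bx - V_\bx| \leq N_1(\bx) + N_2(\bx)$, where $N_1(\bx)$ and $N_2(\bx)$ count $k$-clusters meeting $B_{2r}(\bx)$ in the graphs $G(\Po_n, r)$ and $G(\Po_n|_{A \setminus B_r(\bx)}, r)$, respectively. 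Both expectations are then estimated through the multivariate Mecke formula applied to the appropriate Poisson process: connectedness confines any such cluster to $B_{(k+1)r}(\bx)$, yielding a phase-space factor $O(r^{dk})$, while the Palm isolation probability decays like $\exp(-c_0 n r^d)$ for some $c_0 > 0$, using Lemma \ref{l:bdy_est} to handle vertices close to $\partial A$.

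The main obstacle is securing a uniform positive exponential rate in the isolation probability entering $N_2(\bx)$, because in the restricted Poisson process a candidate $k$-cluster $\by$ need only be isolated inside the reduced region $B_r(\by) \cap (A \setminus B_r(\bx))$, whose volume could a priori be very small when $\by$ hugs $\partial B_r(\bx)$. A short geometric argument shows that for any $\by \in (A \setminus B_r(\bx))^k$ with $h_r(\by) = 1$, this region still has volume $\Omega(r^d)$, with a constant depending only on $d$ and $k$: each vertex $y_i$ of $\by$ lies outside every closed ball $B_r(x_j)$, so a positive fraction of $B_r(y_i)$ lies on the ``outside'' of $B_r(\bx)$ in a direction separating $y_i$ from the convex hull of its nearest $\bx$-neighbors. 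Combined with Lemma \ref{l:bdy_est}, this gives the required uniform $c_0 > 0$, and substituting into (\ref{0730b}) yields $\dtv(S'_{n,k}, Z_{I_{n,k}}) \leq e^{-c n r^d}$ for any $c \in (0, c_0)$ and all $n$ large.
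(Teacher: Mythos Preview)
Your coupling is essentially the same as the paper's (the paper writes $V_\bx$ directly as the $k$-cluster count in the restricted process $\Po_n\setminus B_r(\bx)$, which is equivalent to your formulation), and your $N_1$-bound follows the paper's treatment of its $\xi_1(\bx)$. The gap lies in the $N_2$-estimate.

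The claim that $\vol\bigl(B_r(\by)\cap A\setminus B_r(\bx)\bigr)\geq c(d,k)\,r^d$ for every $\by\in(A\setminus B_r(\bx))^k$ with $h_r(\by)=1$ is false for sufficiently large $k$. In $d=2$, take $k=7$ and place $x_1,\ldots,x_7$ evenly on a circle of radius $r+2\eps$ about the origin (with $\eps$ small); consecutive $x_j$'s are then at distance $\approx 0.87r<r$, so $h_r(\bx)=1$. The complement of $B_r(\bx)$ contains a ``hole'' of radius $\Theta(\eps)$ about the origin, and one can place all of $y_1,\ldots,y_7$ inside this hole: then $h_r(\by)=1$, $\dist(\bx,\by)>r$, yet $\vol\bigl(B_r(\by)\setminus B_r(\bx)\bigr)=O(\eps^2)$, which can be made arbitrarily small relative to $r^2$. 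Your separating-direction heuristic breaks down precisely because each $y_i$ lies in the interior of the convex hull of $\bx$. (The paper itself flags this surrounding phenomenon in the discussion preceding Lemma~\ref{l:N7}.) Consequently $\E[N_2(\bx)]$ is not $O\bigl((nr^d)^k e^{-c_0nr^d}\bigr)$ uniformly in $\bx$, and your uniform bound on $\E[|U_\bx-V_\bx|]$ fails.

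The paper circumvents this by \emph{not} bounding the corresponding term $\E[\xi_2(\bx)]$ pointwise. Instead it expands $\int\E[\xi_2(\bx)]\,p(\bx)\,(n\nu)^k(d\bx)$ via the Mecke formula and recognises the resulting double integral as $k!\,J_{1,n}$, with $J_{1,n}$ defined at~(\ref{e:J1def}). The key point is that the integrand in $J_{1,n}$ is \emph{symmetric} in $(\bx,\by)$, because the exponent involves $\nu\bigl(B_r(\bx)\cup B_r(\by)\bigr)$. Lemma~\ref{l:J1} exploits this symmetry: one may assume without loss that $\bx\prec\by$, so that the lexicographically last point of $\bx\cup\by$ lies in $\by$, and the half-ball to its right provides the required $\Omega(r^d)$ of volume outside $B_r(\bx)$. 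The same device is available to you---after multiplying by $p(\bx)$ and integrating, your $N_2$-contribution becomes exactly this symmetric integral---but you must invoke the symmetry \emph{after} integrating over $\bx$, not attempt a pointwise bound.
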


\begin{proof}
In view of \eqref{e:def_Snk}, we apply Lemma \ref{t:JAP} with 
	$$
	g(\psi,\varphi):=
	h_r(\psi)\1\{\varphi \cap B_r(\psi)= \emptyset\}.
	$$
	For $\bx  \in (\R^d)^k$,
	with $h_r(\bx)=1$,
	we construct  coupled random variables $(U_\bx,V_\bx)$ as follows.  
	Define $U_\bx := \sum_{\varphi \subset \Po_n, |\varphi | =k} g(\varphi, \Po_n \setminus \varphi)$, and
\begin{align*}
V_\bx := \sum_{\ph\subset \Po_n\setminus 
	B_r(\bx), |\ph|=k} g(\ph, \Po_n\setminus B_r(\bx)
	\setminus \ph).
\end{align*}
This coupling satisfies the distributional 
	requirement because the conditional distribution of 
	$\Po_n$ given the event $\{ g(\bx,\Po_n)=1\}$ is the same as the distribution of 
	$\Po_n\setminus B_r(\bx) 
	$.

There are two sources of contribution to the change $U_\bx-V_\bx$ of $k$-cluster counts after removing all the Poisson points in $B_r(\bx)$.
First, after removal, all $k$-clusters of $G(\Po_n,r)$
that were originally intersecting $B_r(\bx)$ are destroyed, therefore 
reducing the $k$-cluster count.  Second, every $k$-set
	$\ph\subset \Po_n\setminus B_r(\bx)$ satisfying the two properties
\begin{itemize}
	\item[(a)] $g(\ph, \Po_n\setminus 
		B_r(\bx)
		\setminus \ph)=1$;
	\item[(b)] $\Po_n (
		B_r(\bx)
		\cap  B_r(\ph) )\ge 1$;
\end{itemize}
becomes a $k$-cluster \emph{only} after removing all the Poisson points
	in $ B_r(\bx)$, thereby increasing the number of $k$-clusters.  
		Let $\xi_1(\bx)$ denote the number of $k$-clusters
		of $G(\Po_n,r)$ that
		intersect $B_r(\bx)$ and let $\xi_2(\bx)$ denote the
		number of $k$-subsets $\ph$ of
		$\Po_n\setminus  B_r(\bx)$ satisfying property (a)
		and property (c) $B_r(\ph)\cap B_r(\bx)
		\neq\emptyset$. It is clear that (b) implies (c) and 
\begin{align}
|U_\bx-V_\bx|\le \xi_1(\bx)+\xi_2(\bx).
	\label{e:UVX1X2}
\end{align}

We estimate $\EE[\xi_1(\bx)]$ and $\EE[\xi_2(\bx)]$ separately. Since
$$
\xi_1(\bx) = \sum_{\ph \subset \Po_n: |\ph|=k, \ph \cap B_r(\bx) \neq \emptyset}
h_r(\varphi)
\1\{ (\Po_n \setminus \ph)  \cap B_r(\ph) = \emptyset \},
$$
applying the multivariate Mecke equation leads to 
$$
\E [ \xi_1(\bx) ] = \frac{1}{k!} \int_{A^k \setminus (A \setminus B_r(\bx))^k}
h_r (\by) e^{-n \nu(B_r(\by))} (n \nu)^k(d\by).
$$
If $\by \in A^k \setminus (A \setminus B_r(\bx))^k$ and $h_r(\by)=1$ then
$\by \subset B_{kr}(\bx)^k$. Moreover by Lemma \ref{l:bdy_est},
if $n$ is large enough then $\nu(B_r(\by)) \geq f_0 (\theta/3) r^d$
for any $\by \in A^k$. Hence for all $n$ large enough and all $\bx$,
$$
\E[ \xi_1(\bx)] \leq   \frac{n^k}{k!} (k \theta (kr)^d \fmax)^k e^{- f_0 (\theta/3) n r^d}.
$$
Therefore setting $p((x_1,\ldots,x_k))= \EE[g(\{x_1,\ldots,x_k\},\Po_n)]$, and using (\ref{e:exp_Snk}),
we have
\begin{align}
	\int_{A^k} \EE[ \xi_1(\bx)] p(\bx) (n \nu)^k(d\bx)
	& \leq (\fmax \theta k^{d+1})^k(nr^d)^k e^{-(\theta/3)f_0nr^d} 
	(\frac{n^k}{k!}) \int_{A^k} p(\bx) \nu^k(d\bx)
\nonumber \\
	& = 
	 (\fmax \theta k^{d+1})^k (nr^d)^k e^{-(\theta/3)f_0nr^d} 
	I_{n,k}.
	\label{0730c}
\end{align}

 Set $\gamma(\bx,\by)= \1\{ r < \dist(\bx,\by) \leq 2r\}$.
 By the multivariate Mecke equation
\begin{align*}
\EE[\xi_2(\bx)] &= \frac{n^k}{k!}
	\int_{A^k}
	h_r(\by)\gamma(\bx,\by) 
	e^{-n \nu(B_r(\by) \setminus B_r(\bx))}
	\nu^k (  d\by),
\end{align*}
and therefore writing $B_r(\bx,\by)$ for $B_r(\bx) \cup B_r(\by)$,
we have that
\begin{align*}
	\int_{A^k} \EE[\xi_2(\bx)] p(\bx)(n \nu)^k (d\bx)
	= \frac{n^{2k}}{k!} \int_{A^k} \int_{A^k}
	h_r(\bx) h_r(\by) \gamma(\bx,\by)
	e^{-n \nu(B_r(\bx,\by))}
	\nu^k(d \by) \nu^k(d\bx).
\end{align*}
By (\ref{e:J1def}) this expression is equal to $k! J_{1,n}$, and therefore
by Lemma \ref{l:J1}
it is $O(e^{-c'nr^d}I_{n,k})$ for some $c' >0$.

Combining this with (\ref{0730c}), and using (\ref{e:UVX1X2}), 
we obtain for suitable $c>0$ that
$$
\int_{A^k} |U_{\bx} - V_{\bx}| p(\bx) (n \nu)^k(d\bx)
= O( e^{- c  n r^d}  I_{n,k}).  
	$$
Applying Lemma \ref{t:JAP} with the present choice of $g$
(so that the $W$ of that result is $S'_{n,k}$)
gives the desired bound in Poisson approximation, completing the proof. 
\end{proof}

\subsection{Poisson approximation for $S_{n,k}$}

For Poisson approximation in the binomial setting, i.e. for $S_{n,k}$,
we use the following result from 
\cite[Theorem II.24.3]{Lin92}
or
\cite[Theorem 1.B]{BHJ92}.
\begin{lemma}
	\label{l:Lindvall}
	Let $n \in \N$.
	Suppose $Y_1,\ldots,Y_n$ are  Bernoulli random variables
	on a common probability space.
	Set $W:= \sum_{i=1}^n Y_i$,
	and  $p_i := \E[Y_i]$ for $i \in [ n]$.
	Suppose  for each $i \in [n]$ that 
	there exist coupled random variables $U_i,V_i$ such that
	 $\LL(U_i)= \LL(W)$ 
	 and $\LL(1+V_i) = \LL(W|Y_i=1)$.
	 Then
	 $$
	 \dtv(W, Z_{\EE[W]} ) \leq  (\min(1,1/\EE[W]))\sum_{i=1}^n p_i
	 \EE[ |U_i- V_i|].
	 $$
\end{lemma}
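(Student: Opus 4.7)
The plan is to invoke Stein's method for Poisson approximation in its standard coupling form. Set $\lambda := \E[W] = \sum_{i=1}^n p_i$, let $Z_\lambda$ denote a Poisson random variable with mean $\lambda$, and for each subset $A$ of the nonnegative integers introduce the Stein equation
$$
\lambda f(k+1) - k f(k) = \1_A(k) - \Pr[Z_\lambda \in A], \qquad k \ge 0,
$$
which has a unique bounded solution $f_A$ with $f_A(0)=0$. The essential analytic input is the classical Stein factor bound
$$
\|\Delta f_A\|_\infty := \sup_{k \ge 0} |f_A(k+1) - f_A(k)| \le \min(1, 1/\lambda),
$$
obtained by solving the recursion explicitly together with a short monotonicity argument.

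Next, I would evaluate the Stein equation at $k=W$, take expectations, and split using $W = \sum_i Y_i$ and $\E[Y_i] = p_i$:
$$
\Pr[W \in A] - \Pr[Z_\lambda \in A] = \sum_{i=1}^n \bigl( p_i \E[f_A(W+1)] - \E[Y_i f_A(W)] \bigr).
$$
Since $Y_i \in \{0,1\}$, we have $\E[Y_i f_A(W)] = p_i \E[f_A(W) \mid Y_i = 1]$. The coupling now enters directly: $\LL(U_i) = \LL(W)$ yields $\E[f_A(W+1)] = \E[f_A(U_i + 1)]$, and $\LL(1+V_i) = \LL(W \mid Y_i=1)$ yields $\E[f_A(W) \mid Y_i=1] = \E[f_A(V_i + 1)]$, so each summand equals $p_i\bigl(\E[f_A(U_i+1)] - \E[f_A(V_i+1)]\bigr)$.

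Finally, since $U_i$ and $V_i$ are $\Z$-valued and nonnegative, the difference $f_A(U_i+1) - f_A(V_i+1)$ telescopes over $|U_i - V_i|$ unit increments, each controlled by $\|\Delta f_A\|_\infty$. Combining this with the Stein factor bound gives
$$
|\Pr[W \in A] - \Pr[Z_\lambda \in A]| \le \min(1, 1/\lambda) \sum_{i=1}^n p_i \E[|U_i - V_i|],
$$
and taking the supremum over $A$ yields the stated bound on $\dtv(W, Z_{\E[W]})$. The only nontrivial step is the Stein factor bound; everything else is bookkeeping. Since this lemma is classical and appears verbatim in the cited references \cite{Lin92, BHJ92}, in practice I would simply refer to those sources rather than reproduce the full Stein-method argument.
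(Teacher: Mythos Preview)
Your proposal is correct and is precisely the classical Stein--Chen coupling argument underlying the cited results. The paper itself gives no proof of this lemma: it simply attributes the statement to \cite[Theorem II.24.3]{Lin92} and \cite[Theorem 1.B]{BHJ92} and moves on. Your final remark, that in practice one would just cite those sources, is exactly what the paper does; the sketch you give is the standard proof found there, and all steps (the Stein equation, the bound $\|\Delta f_A\|_\infty \le \min(1,1/\lambda)$, the decomposition via $\E[Y_i f_A(W)] = p_i\,\E[f_A(W)\mid Y_i=1]$, and the telescoping estimate) are accurate.
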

We use this to obtain the analogue of Proposition \ref{p:poi} in
the binomial setting, i.e. the second Poisson approximation result
(\ref{e:k_bin})
in Theorem \ref{t:int_k}: 
\begin{proposition}
	\label{p:PoApproxBi}
	Suppose $d \geq 2$.  Let $k \in \N$. 
	There exist $c >0$, $n_0 \in (0,\infty)$ such that 
$$
	\dtv(S_{n,k},Z_{\EE[S_{n,k}]})
	\le e^{- c n r^d},
	 ~~~~~~ \forall ~ n \geq n_0.
	$$
\end{proposition}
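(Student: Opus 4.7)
The plan is to apply Lemma \ref{l:Lindvall} to the decomposition
$S_{n,k}=\sum_{i=1}^n \xi_i$ with $\xi_i$ as in (\ref{e:xidef}). By
exchangeability $p_i:=\EE[\xi_i]=\EE[S_{n,k}]/n$, so it suffices to
construct, for each $i$, coupled $(U_i,V_i)$ with $\LL(U_i)=\LL(S_{n,k})$
and $\LL(1+V_i)=\LL(S_{n,k}\mid\xi_i=1)$. Take $U_i:=S_{n,k}(\cX_n)$
and $V_i:=S_{n,k}(\tilde{\cX}_n^{(i)})-1$, where
$\tilde{\cX}_n^{(i)}$ is a Palm version of $\cX_n$ produced by a local
surgery that forces $\xi_i=1$.

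Independently of $\cX_n$, sample a uniform $(k-1)$-subset
$J=\{j_1,\ldots,j_{k-1}\}\subset[n]\setminus\{i\}$, and a tuple
$\bY=(Y_0,\ldots,Y_{k-1})\in A^k$ from the density on $A^k$ proportional
to
\[
\Bigl(\prod_{l=0}^{k-1} f(y_l)\Bigr)\, h_r(y_0,\ldots,y_{k-1})\,
\1\{y_0 \text{ is the left-most entry}\}\,
\bigl(1-\nu(B_r(\{y_0,\ldots,y_{k-1}\}))\bigr)^{n-k}.
\]
Set $\tilde X_i:=Y_0$ and $\tilde X_{j_l}:=Y_l$ for $l\in[k-1]$, and for
each $j\notin J\cup\{i\}$ let $\tilde X_j:=X_j$ if $X_j\notin B_r(\bY)$
and otherwise sample $\tilde X_j$ independently from $\nu$ restricted to
$A\setminus B_r(\bY)$. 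A density decomposition of
$\LL(\cX_n\mid\xi_i=1)$ over the identity of $J$ (uniform by
exchangeability), the Palm cluster positions (governed by the displayed
density), and the remaining positions (i.i.d.\ from $\nu$ restricted
to $A\setminus B_r(\bY)$), combined with the elementary identity that
mixing $X\sim\nu$ on $\{X\notin B\}$ with a fresh draw from $\nu$
restricted to $A\setminus B$ on $\{X\in B\}$ reproduces $\nu$ restricted
to $A\setminus B$, confirms
$\LL(\tilde{\cX}_n^{(i)})=\LL(\cX_n\mid\xi_i=1)$, so the coupling is valid.

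Since $\{Y_0,\ldots,Y_{k-1}\}$ is a $k$-cluster of $\tilde{\cX}_n^{(i)}$
by construction, $|U_i-V_i|$ is dominated by the number of $k$-clusters
(in either sample, other than the Palm cluster) that either contain a
vertex in the modification region $\{X_j : j\in J\cup\{i\}\}\cup\{X_j :
X_j\in B_r(\bY)\}$ or lie within $r$ of an inserted point. The expected
contribution from positions $\{i\}\cup J$ is $O(I_{n,k}/n)$ via
$\Pr[X_j\text{ is in a } k\text{-cluster}]=kI_{n,k}/n$. For the
resampled positions, one bounds the ``hit rate''
$\sum_j\Pr[X_j\in B_r(\bY),\; X_j\in k\text{-cluster of }\cX_n]$ by a
Mecke-type computation (paralleling the estimate of $J_{1,n}$ in
Lemma \ref{l:J1}) using the independence of $\bY$ from $\cX_n$ together
with the fact that the marginal density $q_l(y)$ of $Y_l$ equals
$n f(y)\pi(y)/(kI_{n,k})$ with $\pi(y):=\Pr[X_1\text{ is in a }
k\text{-cluster}\mid X_1=y]$; a suitable control on $q_l^{\max}$,
available when $d\ge 2$, yields $O(r^d I_{n,k})$ for this quantity.
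Combined with a standard packing bound on the number of $k$-clusters in
a ball of radius $O(r)$, these contributions sum to
$\EE[|U_i-V_i|]=O(r^d I_{n,k})$. By symmetry and Lemma \ref{l:Lindvall},
\[
\dtv(S_{n,k}, Z_{\EE[S_{n,k}]})\leq\EE[|U_1-V_1|]=O(r^d I_{n,k}),
\]
and using Theorem \ref{t:Inonunif} to write $I_{n,k}=n\exp(-\theta f_0
nr^d(1+o(1)))$ together with the fact that $nr^d\to\infty$ grows at
most logarithmically by (\ref{e:supcri3}), we obtain
$r^d I_{n,k}=(nr^d)\exp(-\theta f_0 nr^d(1+o(1)))=O(e^{-cnr^d})$
for any fixed $c<\theta f_0$.

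The main obstacle lies in the careful verification of the coupling
identity $\LL(\tilde{\cX}_n^{(i)})=\LL(\cX_n\mid\xi_i=1)$ and in the
Mecke-type bookkeeping for $\EE[|U_i-V_i|]$, in particular the
``hit rate'' estimate: one must decouple the event ``$X_j$ lies in a
$k$-cluster of $\cX_n$'' from the event ``$X_j\in B_r(\bY)$'' using the
independence of $\bY$ from $\cX_n$, and control the maximum of the
marginal density $q_l$ in a way that is specific to $d\ge 2$; the
authors remark that $d=1$ is conjecturally tractable by a different
method.
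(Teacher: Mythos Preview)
Your overall framework matches the paper's: apply Lemma~\ref{l:Lindvall} to the decomposition $S_{n,k}=\sum_i\xi_i$, build the Palm coupling by sampling the conditional cluster $(\tX_1,\ldots,\tX_k)$ and resampling those $X_j$ that land in $B_r(\tX_1,\ldots,\tX_k)$, and bound $\EE[|U_1-V_1|]$. The coupling you describe is essentially the one in the paper (your random $J$ is harmless; the paper just takes $J=\{2,\ldots,k\}$), and Lemma~\ref{l:Vdist} gives exactly the density verification you allude to.

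The genuine gap is in your bookkeeping for $\EE[|U_1-V_1|]$. Your ``hit rate'' estimate
\[
\sum_j \Pr\bigl[X_j\in B_r(\bY),\ X_j\text{ is in a }k\text{-cluster of }\cX_n\bigr]
\]
controls only the \emph{destruction} of old $k$-clusters caused by resampling (the paper's $N_5$). It does not touch the \emph{creation} of new $k$-clusters in $\tilde\cX_n^{(i)}$ near $\bY$: when the old points inside $B_r(\bY)$ are removed, a $k$-subset $\psi\subset\tilde\cX_n^{(i)}$ just outside $B_r(\bY)$ can become isolated. The conditional probability of this, given $\bY$, involves $\exp(-n\nu(B_r(\psi)\setminus B_r(\bY)))$, and if $\bY$ is spread out (say its $k$ balls nearly surround a small region), $B_r(\psi)\setminus B_r(\bY)$ can have arbitrarily small volume, so this factor is not small. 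This is the paper's $N_6$, and it is the hardest term: the paper handles it in Lemma~\ref{l:N7} by first proving that $\bY$ is ``compressed'' into a ball of radius $\delta_1 r$ with probability $1-O(e^{-cnr^d})$, and then invoking the geometric Lemma~\ref{l:geom} to guarantee $\vol(B_r(y)\cap A\setminus B_{(1+\delta_1)r}(x_1))\ge \delta_1 r^d$ for $y\notin B_r(x_1)$.

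This is also where the restriction $d\ge 2$ actually enters: Lemma~\ref{l:geom} is false for $d=1$. Your claim that $d\ge 2$ is needed for ``a suitable control on $q_l^{\max}$'' is not the right mechanism; the marginal density of the Palm cluster has no special behaviour in $d=1$ versus $d\ge 2$, and in any case bounding $q_l^{\max}$ does not address the $N_6$ phenomenon. To complete your argument you need to insert the compression step and the ball-geometry lemma (or an equivalent device) to control the creation of new $k$-clusters adjacent to $\bY$.
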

We shall prove this in stages.
	To apply Lemma \ref{l:Lindvall}
	to $S_{n,k}$, we let $Y_i$ be the indicator of the
event that $|\cC_r(X_i,\cX_n)| =k$ {\em and} $X_i$ is the left-most
point of $\cC_r(X_i,\cX_n)$ (we called this $\xi_i$ at (\ref{e:xidef})).
Then $S_{n,k}= \sum_{i=1}^n Y_i $.

We need to define $U_i$,  $V_i$ for each $i \in [n]$ so that
$\LL(U_i) = \LL(S_{n,k})$, and $\LL(1+V_i)= \LL(S_{n,k}|Y_i=1)$,
and so that we can find a good bound for $\EE[|U_i-V_i|]$.
We do this for $i=1$ as follows. First define the event
$$
\cE:= \{Y_1=1\} \cap \{\cC_r(X_1,\X_n) = \{X_1,\ldots,X_k\}\}.
$$
Let $(\tX_1,\ldots,\tX_k) $ be a random vector in $(\R^d)^k$
with $\LL(\tX_1,\ldots,\tX_k) = \LL((X_1,\ldots,X_k)|\cE)$.
Also let $(X_{i,j}, i \in [n], j \in \N)$ be 
an array of independent $\nu$-distributed random variables,
independent of $(\tX_1,\ldots,\tX_k)$.
Set $\X_{n,1} = \{X_{1,1},\ldots,X_{n,1}\}$.

For $k < i \leq n$ set $J_i = \min \{j: X_{i,j} \notin \cup_{\ell=1}^k
B_r(\tX_\ell) \}$ and set $\tX_{i} := X_{i,J_i}$.
Then set $\X_{n,2}:= \{\tX_1,\ldots,\tX_n\}$.

In other words, we sample the random vector $(\tX_1,\ldots,\tX_k)$
from the conditional distribution of $(X_1,\ldots,X_k)$ given
that $\cE$ occurs, independently of $\X_{n,1}$.
Given the outcome of $\tX_1,\ldots,\tX_k$,
	for $i \in [n] \setminus [k]$, if $X_{i,1} \notin
	\cup_{\ell=1}^k B_r(\tX_\ell)$
we take $\tX_i = X_{i,1}$. Otherwise
 we re-sample a random  vector with distribution $\nu$ repeatedly
until we get a value that is not in $\cup_{\ell=1}^k B_r(\tX_\ell)$,
and call this $\tX_i$.
Thus, given the value of $(\tX_1,\dots,\tX_k)$,
the distribution  of $\tX_i$ is given by the measure
$\nu $ restricted to $A \setminus \cup_{\ell=1}^k B_r(\tX_\ell)$,
normalised to a probability measure. 

Set $U_1 := K_{k,r}(\cX_{n,1})$, and
$V_1 := K_{k,r}(\X_{n,2}) -1$.
Note that in this coupling  we make no attempt to make  a `good'
coupling of $(\tX_1,\ldots,\tX_k)$
to $(X_{1,1},\ldots,X_{k,1})$. However, for
$k+1 \leq i \leq n$ the variables $\tX_{i}$,
are the same as $X_{i,1}$ except in
those rare cases where $X_{i,1} $ lies in
$B_r((\tX_1,\ldots,\tX_k))$. This is what makes
the coupling effective.
Clearly $\LL(U_1)= \LL(S_{n,k})$.

\begin{lemma}
	\label{l:Vdist} With $V_1$ as just defined,
	$\LL(1+V_1)= \LL(S_{n,k}|Y_1=1)$. 
\end{lemma}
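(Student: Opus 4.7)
The plan is to reduce the claim to a symmetry argument. Since $K_{k,r}(\cdot)$ depends only on the underlying vertex set, $S_{n,k}$ can be written as $h(X_1,\ldots,X_n)$ for a symmetric bounded measurable $h$, so it suffices to establish
\begin{align*}
\E[h(X_1,\ldots,X_n)\mid Y_1=1] = \E[h(\tX_1,\ldots,\tX_n)]
\end{align*}
for every such $h$; I would do this in two steps, bridged by the auxiliary event $\cE$.

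For the first step I would decompose $\{Y_1=1\}$ as the disjoint union, over $(k-1)$-subsets $\{i_2<\cdots<i_k\}$ of $\{2,\ldots,n\}$, of the events $\mathscr E_{i_2,\ldots,i_k}:=\{\cC_r(X_1,\cX_n)=\{X_1,X_{i_2},\ldots,X_{i_k}\},\,X_1\text{ is leftmost}\}$. Disjointness holds because $\nu$ is absolutely continuous, so the cluster of $X_1$ in $G(\cX_n,r)$ is almost surely uniquely determined. Applying the permutation sending $(i_2,\ldots,i_k)$ to $(2,\ldots,k)$ (and fixing the index $1$), together with the symmetry of $h$ and the i.i.d.\ property of $X_2,\ldots,X_n$, yields
\begin{align*}
\E[h(X_1,\ldots,X_n)\1_{\mathscr E_{i_2,\ldots,i_k}}]=\E[h(X_1,\ldots,X_n)\1_{\cE}].
\end{align*}
Summing over the $\binom{n-1}{k-1}$ subsets and using the $h\equiv 1$ case to compute $\Pr[Y_1=1]=\binom{n-1}{k-1}\Pr[\cE]$, I would obtain $\E[h(X_1,\ldots,X_n)\mid Y_1=1]=\E[h(X_1,\ldots,X_n)\mid\cE]$.

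For the second step I would identify $\LL((X_1,\ldots,X_n)\mid\cE)$ with $\LL((\tX_1,\ldots,\tX_n))$ as ordered $n$-tuples. The first $k$ coordinates have the same law by the very definition of $(\tX_1,\ldots,\tX_k)$. Given these coordinates satisfying the connectivity and leftmost requirements, the residual content of $\cE$ is precisely the requirement that $X_j\in A\setminus B_r(X_1,\ldots,X_k)$ for every $j>k$; since $X_{k+1},\ldots,X_n$ are a priori i.i.d.\ $\nu$ and independent of $(X_1,\ldots,X_k)$, their conditional law under $\cE$ is that of i.i.d.\ draws from $\nu$ restricted and renormalised to $A\setminus B_r(X_1,\ldots,X_k)$, which matches exactly the rejection-sampling definition of $\tX_{k+1},\ldots,\tX_n$.

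Combining the two steps and specialising to $h(x_1,\ldots,x_n)=g(K_{k,r}(\{x_1,\ldots,x_n\}))$ for an arbitrary bounded measurable $g:\N \cup \{0\} \to \R$ gives $\LL(S_{n,k}\mid Y_1=1)=\LL(K_{k,r}(\cX_{n,2}))=\LL(1+V_1)$, as required. The only delicate point, I expect, is the bookkeeping in the first step: verifying the disjointness of the decomposition of $\{Y_1=1\}$ and ensuring the symmetrising permutation fixes $X_1$ (so that the leftmost condition is preserved) while acting only on the i.i.d.\ coordinates.
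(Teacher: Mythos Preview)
Your proposal is correct and follows essentially the same two-step approach as the paper: first reduce the conditioning on $\{Y_1=1\}$ to conditioning on $\cE$ via exchangeability, then identify $\LL((X_1,\ldots,X_n)\mid\cE)$ with $\LL((\tX_1,\ldots,\tX_n))$. You are somewhat more explicit than the paper in step one (writing out the disjoint decomposition and permutation argument, where the paper just invokes exchangeability in one sentence), while the paper is more explicit in step two (computing the joint densities directly rather than arguing via conditional independence and rejection sampling); these are stylistic rather than substantive differences.
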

	\begin{proof}
		Given $Y_1=1$,
we have that $\cC_r(X_1,\cX_n)$ has $k$ vertices with $X_1$
the left-most of these, and by exchangeability we
can assume the other vertices of $\cC_r(X_1,\X_n)$
are $X_2,\ldots,X_k$
without affecting the distribution of the point process $\cX_n$.
In other words, we have
$
\LL(\X_n|Y_1=1)= \LL(\X_n| \cE
		).$ Moreover, we claim that 
		\begin{align}
		\LL((\tX_1,\ldots,\tX_n)) = \LL((X_1,\ldots,X_n)|\cE). 
			\label{e:condit}
		\end{align}
		This implies that
$
\LL(\{\tX_1,\ldots,\tX_n\}) = \LL(\X_n|\cE) = \LL(\X_n|Y_1=1),
$
		and hence $\LL(1+V_1) = \LL(K_{k,r}(\{\tX_1,\ldots,\tX_n\}))
= \LL( S_{n,k}|Y_1=1)$, as required.

		It remains to confirm the claim (\ref{e:condit}).
		The reader may think this is obvious;
		we provide a sketch proof.
For $\bx = (x_1,\ldots,x_n) \in (\R^d)^n$, let
us 
set $\bx_1^k := (x_1,\ldots,x_k)$, $\bx_{k+1}^n :=
(x_{k+1},\ldots,x_n)$ and $g_r(\bx_1^k,\bx_{k+1}^n):=
\prod_{\ell=k+1}^n \1\{x_\ell \notin  B_r(\bx_1^k)\}$.
		Then with $h_r^*$ defined just after (\ref{e:Brhrdef}),
\begin{align}
	\Pr[(X_1,\ldots,X_n) \in d\bx; \cE ] 
	=  h_r^*(\bx_1^k) g_r(\bx_1^k,\bx_{k+1}^n) 
	\nu^n(d\bx),
	\label{e:0902a}
\end{align}
so that $\Pr(\cE) = 
\int_{A^n} h_r^*(\bx_1^k) g_r( \bx_1^k,\bx_{k+1}^n)
		\nu^n(d \bx)$. For $\bx_{1}^k \in (\R^d)^k$,  
		let us set $I(\bx_1^k) : = \int_{(\R^d)^{n-k}}
		g_r(\bx_1^k, \bx_{k+1}^n) \nu^{n-k} (d\bx_{k+1}^n).$
		Then by (\ref{e:0902a}),
\begin{align*}
	\Pr[(\tX_1,\ldots,\tX_k) \in d\bx_1^k] & =
	\Pr[(X_1,\ldots,X_k) \in d\bx_1^k | \cE]
	= \frac{ h^*_r(\bx_1^k) 
	 I(\bx_1^k)
	 \nu^{k}
	(d\bx_1^k) }{\int_{A^n}
	h_r^*(\bx_1^k)
	g_r(\bx_1^k,\bx_{k+1}^n)
	\nu^n(d\bx) }.
\end{align*}
Then
\begin{align*}
	\Pr[(\tX_1,\ldots,\tX_n) \in d\bx] &=
	\frac{g_r(\bx_1^k,\bx_{k+1}^n) \nu^{n-k}(d\bx_{k+1}^n) }{
		I(\bx_1^k)	}
	\times \Pr[ (\tX_1,\ldots \tX_k) \in d\bx_1^k]
	\\
	& = \frac{ h^*_r(\bx_1^k) g_r(\bx_1^k,\bx_{k+1}^n)
	\nu^n(d(\bx_1^k,\bx_{k+1}^n))}
	{\int_{A^n} h_r^*(\bx_1^k) g_r(\bx_1^k,\bx_{k+1}^n) \nu^n(d \bx)}
	\\
	& = 
	\Pr[(X_1,\ldots,X_n) \in d\bx| \cE ], 
\end{align*}
where the last line comes from (\ref{e:0902a}).
Thus we have (\ref{e:condit}).
\end{proof}

\begin{proof}[Proof of Proposition \ref{p:PoApproxBi}]
	We need to find a useful bound on $\E[|U_1-V_1|]$.
	Note that the `new' point process $\mathcal X_{n,2}=\{\tX_1,\ldots\tX_n\}$
	is obtained from the `old' point process
	$\X_{n,1}$ by replacing  the points
	$X_{1,1},\ldots,X_{k,1}$ with $\tX_1,\ldots,\tX_k$ 
	and also, for those $i \in [n] \setminus [k]$ such
	that $J_i >1$, replacing the point $X_{i,1} $ with $\tX_i$,
	leaving the other points unchanged.

	We refer to vertices and components (here also called {\em clusters})
	of $G(\X_{n,1},r)$ as being {\em old} while vertices  and clusters of
	$G(\cX_{n,2},r)$ are {\em new}.
	We write $\tcC$ for $\{\tX_1,\ldots,\tX_k\}$. By construction
	$\tcC$ is a cluster of $G(\cX_{n,2},r)$. 

The value of $|U_1-V_1|$ is bounded by the sum of the following variables
$N_i, 1 \leq i \leq 6$.

	$N_{1}$ is the number of old $k$-clusters involving 
	$X_{1,1},\ldots,X_{k,1}$,
	and $N_2$ is the number of new $k$-clusters within distance $r$
of one of $X_{1,1},\ldots,X_{k,1}$ but not using any of the
new 	vertices  $\tX_i$ with $i>k, J_i>1$, and with no
	vertex in $B_{2r}(\tcC)$.
These will be affected by removing $X_{i,1}$
	for $1 \leq i \leq k$.

$N_3$ is the number of old $k$-clusters intersecting $B_r(\tX_i)$
	for some $i  \in [n] \setminus [k]$ with $J_i > 1$, and not using
	$X_{i,1}$ (if such a cluster does use $X_{i,1}$ it is included in
	$N_5$ below).
	$N_4$ is the number of new $k$-clusters involving $\tX_i$
	for some $i >k$ with $J_i >1$, and having no vertex in $B_{2r}(\tcC)$.
	These are affected by the creation of new vertices at $\tX_i$
	with $i >k, J_i>1$.

	$N_5$ is the number of old $k$-clusters having at least one 
	vertex in $B_r(\tcC)$.
These clusters are affected by the
	removal of old vertices in $B_r(\tcC)$.

$N_6$ is the number of new $k$-clusters having at least one
	vertex in $B_{2r}(\tcC)$,
other than $\tcC$ 
itself. These could be created from previously larger clusters
	due to the removal of old vertices in 
	$B_r(\tcC)$.

	We estimate $\EE[N_i]$ for each $i \in [6]$,
	repeatedly using the fact that $\nu(B_s(x)) \geq (\theta/3)f_0s^d$
	for
	all small enough $s>0$
	and 
	all $x \in A$
	by Lemma \ref{l:bdy_est}.
	We have for large enough $n$ that
	\begin{align*}
		\EE[N_1] & \leq k \binom{n-1}{k-1}
		\int_{A^k} h_r(\bx) (1- \nu(B_r(\bx)))^{n-k} \nu^k(d \bx)
		\\
		& \leq 2 k n^{k-1} (\fmax \theta ((k-1)r)^d)^{k-1}
		\exp(-f_0 (\theta/3) n r^d),
	\end{align*}
	and writing $h_r(x,\bx)$ for $h_r((x,\bx))$, we have
	\begin{align*}
		\EE[N_2] & \leq  k \binom{n-k}{k}
		\int_A \int_{A^{k}} h_r(x,\bx) (1- \nu(B_r(\bx)))^{n-2k} 
		\nu^k(d \bx) \nu(dx)
		\\
		& \leq 2  n^{k} (\fmax \theta (kr)^d)^{k}
		\exp(-f_0 (\theta/3) n r^d).
	\end{align*}
	Using the fact that $\Pr[J_i>1] \leq k \fmax \theta r^d$
	for $i >k$, and the point process
	$\X_{n,1} \setminus \{X_{i,1}\}$ is independent of 
	the event $\{J_i >1\}$ and the random vector $\tX_i$, we obtain that
	\begin{align*}
		\EE[N_3] & \leq (n-k) (k \fmax \theta r^d) \binom{n-1}{k}
		\sup_{x \in A}
		\int_{A^{k}} h_r(x,\bx) (1- \nu(B_r(\bx)))^{n-1-k}
		\nu^{k}(d \bx)
		\\
		& \leq c (nr^d)^{k+1} \exp(- f_0 (\theta/3) n r^d).
	\end{align*}
Using the same bound on $\Pr[J_i >1]$ and the fact that for $i \in [n] 
\setminus [k]$ the  distribution
of $\X_{n,2} \setminus \{\tX_1,\ldots,\tX_k,\tX_i\},$ 
given $(\tX_1,\ldots,\tX_k,\tX_i)$ is that of
a sample of size $n-k-1$ from the restriction of $\nu$ to $A \setminus
\cup_{\ell=1}^k \tX_\ell$ normalized to be a probability measure,
we have that
	\begin{align*}
		\EE[N_4] & \leq (n-k) (k \fmax \theta r^d)
		\binom{n-k-1}{k-1}
		\sup_{x \in A} 
		\left\{ (2 \fmax \theta (kr)^d)^{k-1}
		(1- \nu(B_r(x)))^{n-2k} \right\}
		\\
		& \leq c (n r^d)^k \exp(-f_0 (\theta/3) n r^d).
	\end{align*}
	Next,
	\begin{align*}
		\EE[N_5] & \leq n k \fmax \theta r^d
		\binom{n-1}{k-1} \sup_{x \in A} \int_{A^{k-1}}
		h_r(x,\bx) (1- \nu(B_r(x,\bx)))^{n-k} \nu^{k-1}(d\bx)
		 \\ & \leq c (nr^d)^{k} \exp(- f_0 (\theta/3) nr^d).
	\end{align*}

	We give an estimate for $\E[N_6]$ in Lemma \ref{l:N7} below.
	Combining  this with the earlier estimates for
	$N_1,\ldots,N_5$, we obtain that there exists $\eps >0$ that 
	$$
	\E [ |U_1-V_1|] = O(\exp(-  \eps n r^d)).
	$$
By the exchangeability of $X_1,\ldots,X_n$, we can construct
$U_i, V_i$ similarly for each $i \in [n]$, with the same bound
for $\E[|U_i-V_i|]$. Also we know
$\E[S_{n,k}] \to \infty$ as $n \to \infty$ by Proposition 
\ref{p:Snmeanbin} and Lemma  \ref{l:Ilower}.
Then by Lemma \ref{l:Lindvall} (with the $W$ of that result
equal to our $S_{n,k}$) we obtain for  $n$ large enough  that
\begin{align*}
	\dtv(S_{n,k},Z_{\E[S_{n,k}]} ) & \leq (\E[S_{n,k}])^{-1}
	\sum_{i=1}^n \E[Y_i]
	\times O(\exp(-n \eps
	r^d))
	 = O(\exp(-n \eps
	r^d)),
\end{align*}
as required.
\end{proof}

	\begin{lemma}
		\label{l:N7}
		Suppose $d \geq 2$.
		Let $N_6$ be the number of $k$-clusters of $G(\X_{n,2},r)$
		within distance $2r$ of $\{\tX_1,\ldots,\tX_k\}$,
		other than $\{\tX_1,\ldots,\tX_k\}$ itself. Then
		there exists a constant $c >0$ such
		that $\E[N_6] = O(\exp(-c nr^d))$ as $n \to \infty$.
	\end{lemma}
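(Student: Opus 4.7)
The plan is to condition on the sampled cluster $\tcC = \{\tX_1,\ldots,\tX_k\}$, apply the multivariate Mecke formula for binomial processes to the remaining $n-k$ points, and extract exponential decay from the vacancy probability. Given $\tcC$, the points $\tX_{k+1},\ldots,\tX_n$ are i.i.d.\ samples from the probability measure $\tilde\nu$ obtained by restricting $\nu$ to $A\setminus B_r(\tcC)$ and renormalising. Since each point of $\cX_{n,2}\setminus\tcC$ lies automatically at Euclidean distance greater than $r$ from every point of $\tcC$, any $k$-cluster of $G(\cX_{n,2},r)$ other than $\tcC$ is contained in $\cX_{n,2}\setminus\tcC$.

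Applying the multivariate Mecke formula, then using $1-t\le e^{-t}$ together with the inequality $\tilde\nu(B_r(\by))\ge \nu(B_r(\by)\setminus B_r(\tcC))$ (which holds because $\nu(A\setminus B_r(\tcC))\le 1$), I would obtain
\begin{equation*}
\EE[N_6\mid\tcC]\le \binom{n-k}{k}\int h_r(\by)\,\I\{\dist(\by,\tcC)\le 2r\}\exp\!\bigl(-(n-2k)\nu(B_r(\by)\setminus B_r(\tcC))\bigr)\tilde\nu^k(d\by).
\end{equation*}
The connectedness constraint $h_r(\by)=1$ together with $\dist(\by,\tcC)\le 2r$ forces $\by$ into $B_{(k+1)r}(\tcC)^k$, so the effective integration domain has $\tilde\nu^k$-measure $O(r^{dk})$, while the binomial prefactor is $O(n^k)$.

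The decisive step is the geometric inequality $\nu(B_r(\by)\setminus B_r(\tcC))\ge c_0 r^d$ for some $c_0>0$, uniformly in valid configurations with both $\tcC$ and $\by$ sitting at distance at least $O(kr)$ from $\partial A$. I would prove this by mirroring the argument in Lemma~\ref{l:J1}: split the range of $\by$ into two pieces according to whether $\max\by\succ\max\tcC$ or $\max\by\prec\max\tcC$ in the lexicographic order (losing a factor of two to symmetry), and in each case apply the right-half-ball (resp.\ left-half-ball) argument centred at the extremal entry of $\by$. Boundary contributions, from $\tcC$ or $\by$ within $O(kr)$ of $\partial A$, are absorbed as a lower-order remainder via Lemma~\ref{l:bdy_est} and condition~\eqref{e:supcri3}, in exactly the way the term $z_n$ is handled in the proof of Lemma~\ref{l:J1}.

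Combining these ingredients gives $\EE[N_6\mid\tcC]=O\bigl((nr^d)^k\exp(-c_0 nr^d)\bigr)=O(\exp(-cnr^d))$ for some $c>0$, and taking expectation over $\tcC$ completes the proof. The main obstacle is the uniform geometric lower bound on $\nu(B_r(\by)\setminus B_r(\tcC))$; this is precisely where the hypothesis $d\ge 2$ becomes essential, as in Lemma~\ref{l:J1}.
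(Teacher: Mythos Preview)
Your proposal has a genuine gap at the ``decisive step.'' The symmetry you invoke from Lemma~\ref{l:J1} does not transfer here, and without it the claimed uniform lower bound $\nu(B_r(\by)\setminus B_r(\tcC))\ge c_0 r^d$ is simply false.

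In Lemma~\ref{l:J1} the integrand involves $\nu(B_r(\bx,\by))$, which is symmetric in $\bx$ and $\by$; the factor-of-two reduction lets one assume $\bx\prec\by$ and then bound only $\nu(B_r(\by)\setminus B_r(\bx))$ in that case. In the present lemma $\tcC$ is conditioned on while $\by$ is integrated, and the quantity you need, $\nu(B_r(\by)\setminus B_r(\tcC))$, is intrinsically asymmetric. Your two cases do exhaust the possibilities, but when $\max\by\prec\max\tcC$ the half-ball argument delivers $\nu(B_r(\tcC)\setminus B_r(\by))\ge c r^d$, which is the wrong direction.

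Concretely: for $d=2$ and $k\ge 7$, place $\tcC$ evenly on a circle of radius $R$ with $r<R\le 2r$ (this makes $G(\tcC,r)$ connected since $2R\sin(\pi/k)<r$), and cluster all of $\by$ near the centre. Then $\dist(\by,\tcC)\approx R\in(r,2r]$, yet $B_r(\by)\setminus B_r(\tcC)$ is contained in a disk whose radius tends to $0$ as $R\downarrow r$. No half-ball argument rescues this; it is exactly the obstruction flagged in the paper's paragraph preceding the proof.

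The paper's route is therefore essentially different. Rather than seek a uniform geometric bound over all $\tcC$, it exploits the \emph{law} of $\tcC$: by the compression phenomenon, $\tcC\subset B_{\delta_1 r}(\tX_1)$ (the event $\MM$) except with probability $O(e^{-cnr^d})$, and this is established by a separate calculation comparing $\Pr[E_x]$ to $\Pr[F_x]$. On $\MM$ one has $B_r(\tcC)\subset B_{(1+\delta_1)r}(\tX_1)$, a single slightly inflated ball, and then Lemma~\ref{l:geom} (which is where $d\ge 2$ genuinely enters) gives $\vol(B_r(y)\cap A\setminus B_{(1+\delta_1)r}(\tX_1))\ge\delta_1 r^d$ for every admissible $y$. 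On $\MM^c$ one just uses that $N_6$ is deterministically bounded. Your argument never engages with the distribution of $\tcC$, and the half-ball reasoning you borrow from Lemma~\ref{l:J1} is valid for all $d\ge 1$, so it cannot be the source of the restriction to $d\ge 2$.
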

	It is harder to find a good upper bound for $\EE[N_6]$
	than for $\E[N_i], 1 \leq i \leq 5$.
	The main problem is that for fairly large $k$ (for example, $k=12$
	for $d=2$) there can be a configuration of 
	$\{\tX_1,\ldots,\tX_k\}$ such that the union of balls of
	radius $r$ centred on these points surrounds a very small
	(but non-empty)
	region.  To deal with this, we verify that this eventuality
	is very unlikely because of the compression phenomenon
	mentioned before: given small but fixed $\delta_1 >0$,
	a $k$-cluster is very likely to be
	compressed within a small ball of radius $\delta_1 r$,
	and if this happens the kind of
	possibility just mentioned cannot happen. We can deal
	with the probability  of non-compressed $k$-clusters
	by other means.

	The proof of Lemma \ref{l:N7} uses the following
	geometrical lemma.
	 The lemma fails when $d=1$, and
	 is the reason for the restriction to $d \geq 2$ in the
	 statement of  Proposition \ref{p:PoApproxBi}. It may be
	 possible to prove Lemma \ref{l:N7}, and hence Proposition
	 \ref{p:PoApproxBi}, by other means when $d=1$.
 \begin{lemma}
 \label{l:geom}
Suppose $d \geq 2$.  There exists a constant $\delta_1 >0$ such that
we have for all small enough $s>0$ that
\bea
	 \label{e:ByBx}
	 \vol (B_s(y) \cap A \setminus B_{(1+\delta_1)s}(x))
\geq \delta_1 s^d, ~~ \forall ~x \in A, y \in A \setminus B_s(x).
\eea
\begin{proof}
	Suppose $\delta_1 < 1/4$.
	If $x \in A, y \in A^{(s)} \setminus B_s(x)$ then
	the set
	$B_s(y)  \setminus B_{(1+ \delta_1)s}(x) $ contains a ball of
	radius $s/4$, and is contained in $A$,  so that 
	$\vol (B_s(y) \cap A \setminus B_{(1+\delta_1)s}(x)) \geq \theta (s/4)^d$.

	Let $\delta_2 = 0.01$, and
	 let $\tau \in (0,\tau(A))$.
		We show first that 
	 for all $s \in (0, \delta_2 \tau]$ and all $y \in A \setminus 
	 A^{(s)} $ 
	 we can find a unit vector  $e(y)$ such that
	 \begin{align}
	 B^*(y;s, \delta_2, e(y)) \subset A,
		 \label{e:halfball-}
	 \end{align}
	 where
	the set $B^*(y;s, \eta, e) $ is as defined at
	 \cite[P98]{Pen03}, namely, it is the set of $ z \in B(x,r)$
	 such that $\langle (z-y) , e \rangle > \eta s$, where
	 $\langle \cdot,\cdot \rangle$ is the usual Euclidean inner product.

	 Suppose $0 < s \leq \delta_2 \tau$. 
	 Given $y \in A \setminus A^{(s)}$, let $w $ be the nearest point of $\partial A$
	 to $y$. Let $e= \hat n_w$. 
	 By translating and rotating $A$ we can assume without loss of generality that $y=o$
	 and $e = e_d := (0, \ldots,0,1)$, the $d$th coordinate unit vector.
	 For $u \in \R^{d-1} $ with
        $\|u\| < \tau$, define
        $$
        \phi(u):= \sup \{a \in [-\tau,\tau]: (u,a) \notin A\}.
        $$
	Then as in the proof of \cite[Lemma 3.6]{HPY25}, $\phi(u) \leq \|u\|^2/\tau$.
	But if $z \in B^*(y,s,\delta_2,e)$ then writing $\pi$
	for projection onto the first $d-1$ coordinates,
	since $s \leq \delta_2 \tau$
	we have
	$$
	\langle z, e \rangle \geq \langle z-y,e \rangle \geq \delta_2 s \geq   
	s^2/\tau \geq \phi(\pi(z)) 
	$$
	so that $z \in A$. This gives us 
		 \eqref{e:halfball-} for $s < \delta_2 \tau$.

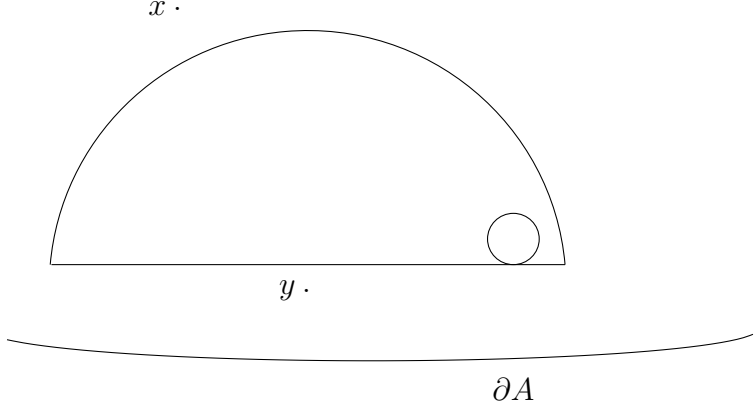
\begin{figure}
	\centering
	\begin{tikzpicture}[scale=1.7]
		
		\draw (0,0) arc (5:175:2cm);
		\draw (0,0) -- (-3.975,0) ;
		\node at (-2,-0.2) {.};
		\node[left] at (-2,-0.2) {$y$};
		\node[left] at (-3,2) {$x$};
		\node at (-3,2) {.};
		
		\draw (-0.4,0.2) circle (0.2);
		
		\draw[domain=200:350,smooth,variable=\x] plot ({3*cos(\x)-1.5},{0.25*sin(\x)-0.5});
		\node[below] at (-0.4,-0.8) {$\partial A$};
	\end{tikzpicture}
	\caption{Illustration for the proof of Lemma \ref{l:geom}.
	The small circle represents $B_{\delta_2 s}(z)$. The larger segment
	represents $B^*(y,s,\delta_2,e(y))$.}
\end{figure}

	 Let $s \in (0,s_0)$.
	 Pick $y \in A \setminus A^{(s)}$ and let $e= e(y)$
	 be as just described.  
	 Without loss of generality, now assume
	 $y=o$ and $e = e_d$.
	 The following argument is
	 illustrated in Figure 3.

	 Let $x \in A \setminus B_{s}(y)$. After a further
	 rotation we can also assume without loss of generality that
	 $x= -ue_1 + ve_d$ for some $u \geq 0$ and $v \in \R$
	 with $u^2+ v^2 \geq s^2$, where $e_1 := (1,0,\ldots,0)$.
	 It follows from this that $\|s e_1 -x\| \geq \sqrt{2} s$.

	 Let $z = (1-7 \delta_2)se_1 + 2 \delta_2s e_d$. By the
	 triangle inequality
	 $\|z-x\| \geq (\sqrt{2} - 9 \delta_2)s \geq (1+ 3 \delta_2) s$.
Hence $B_{\delta_2s}(z) \subset \R^d \setminus B_{(1+ \delta_2)s}(x)$.
Moreover $B_{\delta_2 s}(z)^o \subset B^*(y;s, \delta_2,e_d) \subset A$. 
Therefore
\bean
\vol (B_s(y) \cap A \setminus B_{(1+\delta_2)s}(x))
	\geq \vol(B_{\delta_2 s}(z)^o) =
	\theta (\delta_2 s)^d.
\eean
	Thus taking $\delta_1 = \min(\delta_2,\theta \delta_2^d)$
	we have (\ref{e:ByBx}).
 \end{proof}
	 \end{lemma}
	 \begin{proof}[Proof of Lemma \ref{l:N7}]
Choose  $\delta_1 \in (0,1/2)$ with the property in Lemma \ref{l:geom}.
Let $\bx = (x_1,\ldots,x_k) \in (\R^d)^k$ with $x_1 \prec x_i$
and $\|x_1 - x_i\| \leq \delta_1 r$ for $2 \leq i \leq k$ (here $\prec$
refers to the lexicographic ordering).  For $i \in [n] \setminus [k]$,
let $\X_{i,k,n} := \{\tX_{k+1},\ldots,\tX_n\} \setminus\{\tX_i\}$.  Let
\begin{align*}
N_6(\bx):=  \sum_{i=k+1}^n \1\{\tX_i \in B_{2r}(\bx) \setminus B_r(\bx),
\X_{i,k,n} (B_r(\tX_i) \setminus B_{(1+ \delta_1)r}(x_1) ) < k \}.
\end{align*}
Then using the property from Lemma \ref{l:geom} (taking $y=\tX_i$ and 
 $x = x_1$), and then a  Chernoff-type bound (for example, 
 \cite[Lemma 1.1]{Pen03}), we have for large $n$ that
\begin{align*}
\E[N_6(\bx)] & \leq (n-k)2k  \fmax \theta (2r)^d \Pr[\Bin((n-k-1), f_0 \delta_1 
	r^d) \leq k-1 ]
\\
& \leq  \exp(-  f_0 (\delta_1/2) n r^d).
\end{align*}
Let $\MM$ be the event that
$\{\tX_1,\ldots,\tX_k\} \subset B_{\delta_1 r} (\tX_1)$, i.e.  `compressed'.
If $\MM$ occurs then $N_6 \leq N_6((\tX_1,\ldots,\tX_k))$, and thus we have
	\begin{align}
	\E[N_6 |\MM] \leq \exp(-f_0 (\delta_1/2) nr^d).
		\label{e:N7comp}
	\end{align}
		 Also,
	\begin{align}
		\Pr[\MM^c] & = \Pr[ \cup_{i=2}^k \{X_i \notin B_{\delta_1 r}(X_1) \}
	| \{h_r^*(X_1,\ldots,X_k)=1\} \cap \cap_{\ell =k+1}^n \{X_\ell \notin
	\cup_{i=1}^kB_r(X_i) \} ]
		\nonumber \\
		& = \frac{ \int_{A} \Pr[ E_x] \nu(dx) }{\int_A \Pr[F_x] \nu(dx)}
		\label{e:PrMc}
	\end{align}
	where we set 
	\begin{align*}
		F_x & := \{h_r^*(x,X_2,\ldots,X_k) =1\} \cap  
	\cap _{\ell =k+1}^n \{X_\ell \notin B_r((x,X_2,\ldots, X_k))\};
		\\
		E_x & := \{ 
	\{X_2,\ldots,X_k \} \setminus B_{\delta_1 r}(x) \neq \emptyset
	\} \cap F_x.
	\end{align*}

	Set $b = \limsup_{n \to \infty} ( \theta nr^d/\log n)$
	and note that $b(f_0 -f_1/2) < 1/d$ by (\ref{e:supcri3}).
	Take $\delta_2 >0$ such that 
	\begin{align}
		b(f_0-f_1/2) + b f_0 \delta_2 < 1/d,
		\label{e:de2}
	\end{align}
and such that moreover for all $y \in \R^d \setminus B_{\delta_1}(o)$ we have
$\vol(B_1(y) \setminus B_1(o)) \geq \delta_2 \theta$, and also $\delta_2 < 1/3$.
Then for all small enough $s >0$ and all
	$x \in A^{(3s)}, y \in A \setminus B_{\delta_1s}(x)$ we have
	$\vol(A \cap B_s(y) \setminus B_s(x) ) > \delta_2 \theta s^d$.
Moreover, $h_r^*(x,x_2,\ldots,x_k)=0$ unless $x_2,\ldots,x_k$ all lie in 
 $B_{(k-1)r}(x)$, and hence for all large enough $n$ and all $x \in A^{(3r)}$ we have
	\begin{align*}
		\Pr[ E_x ] & \leq
		\int_{(B_{kr}(x))^{k-1} \setminus (B_{\delta_1r}(x))^{k-1}} 
		(1- \nu(B_r(x,\bx)))^{n-k}
		\nu^{k-1}(d\bx)
		\\
		& \leq 2 
		\int_{(B_{kr}(x))^{k-1} \setminus (B_{\delta_1r}(x))^{k-1}} 
		\exp(-n \nu(B_r(x,\bx))) \nu^{k-1}(d \bx)
		\\
		& \leq 2 
		\int_{(B_{kr}(x))^{k-1} \setminus (B_{\delta_1r}(x))^{k-1}} 
		\exp(- n \nu(B_r(x)) -  f_0 \theta 
		\delta_2 n r^d) \nu^{k-1} (d \bx)
		\\
		& \leq c r^{d(k-1)} \exp(-n \nu(B_r(x)) 
		-f_0 \theta \delta_2 n r^d),
	\end{align*}
for some constant $c$ (independent of $x$).  Similarly using the simpler  bound 
$\nu(B_r(x,\bx)) \geq \nu(B_r(\bx))$ we obtain for $x \in A \setminus A^{(3r)}$
that $ \Pr[E_x] \leq c r^{d(k-1)} e^{-n\nu(B_r(x))}.$ Hence by splitting the
integral into regions $A^{(3r)}$ and $A \setminus A^{(3r)}$, given
$f_1^- < f_1$, and using Lemma \ref{l:bdy_est} we obtain for some new
constant $c$ that for $n$ large,
\begin{align}
	r^{d(1-k)} \int_A \Pr[E_x] \nu(dx) \leq c \exp(-  \theta 
	(f_0 + f_0\delta_2) n  r^d) + c r \exp(- \theta (f_1^-/2) n r^d).
		\label{e:intPE}
\end{align}

On the other hand, given $\delta_3 \in (0,\delta_1)$, and any $\eps >0$,
we have for large enough $n$ and all $x \in A^{(2r)}$, setting $B_s^+(x)$
to be the right half of $B_s(x)$, that
\begin{align*}
	\Pr[F_x] & \geq \int_{(B^+_{\delta_3 r}(x))^{k-1}} 
	(1- \nu(B_{(1+\delta_3)r}(x)))^{n-k} \nu^{k-1}(d \bx) 
\\
&  \geq (f_0 (\theta/2) \delta_3^d r^d)^{k-1} 
		\exp(-(1+ \eps)n \nu (B_{(1+\delta_3)r}(x))).
\end{align*}
Choose $\delta_3 $ so that $\fmax ((1+\delta_3)^d-1) \leq f_0 \delta_2/4$, and
	$\eps$ so that $2^d \eps \fmax < f_0 \delta_2/4$. Then
\begin{align*}
	(1+ \eps) \nu(B_{(1+\delta_3)r}(x)) & \leq \nu(B_r(x)) +
	\fmax \theta r^d((1+ \delta_3)^d - 1)) + \eps \fmax \theta (2r)^d
		\\
		&	\leq \nu(B_r(x)) + \theta r^d f_0  \delta_2/2, 
	\end{align*}
so that given $f_0^+ > f_0$ there is a further constant $c'>0$
	such that for large $n$ (using the continuity of $f$ on $A$)
	we have
	\begin{align*}
		r^{d(1-k)} \int_A \Pr[F_x] \nu(dx)
		\geq c' \exp(- (f_0^+ + f_0 \delta_2/2) \theta nr^d). 
	\end{align*}
	Therefore by (\ref{e:PrMc}) and (\ref{e:intPE})  there
	is a further constant $c''$  such that
	\begin{align*}
		\Pr[\MM^c] & \leq c''
		\exp[ 
		(f_0^+ + f_0 \delta_2/2 - f_0 - f_0 \delta_2)
		 \theta nr^d]
		+
		c'' r \exp[(f_0^+ + f_0 \delta_2/2 - f_1^{-}/2) \theta n r^d]
		\nonumber
		\\
		& = c'' \exp[(f_0^+ - f_0 - f_0 \delta_2/2) \theta n  r^d]
		+ c'' r \exp[(f_0^+ + f_0 \delta_2/2 - f_1^-/2) \theta n r^d].
	\end{align*}
	Taking $f_0^+$ sufficiently close to $f_0$
	and $f_1^-$ sufficiently close to $ f_1^-$,
	we have that the first
term is $O(\exp(-  f_0 (\delta_2/4) \theta n r^d))$, while taking
$b:= \limsup_{n \to \infty} ((n\theta r^d)/\log n)$,
the second term is bounded by $r n^{b(f_0- (f_1/2) +  f_0 \delta_2   )}$, 
	and by (\ref{e:de2}) and (\ref{e:supcri3}),
	this is $O(\exp (-c nr^d))$ for some $c>0$.
	 Thus using (\ref{e:N7comp}) and the fact that $N_6$
	 is bounded by a deterministic constant depending only
	 on $d$ and $k$, we have for some constant $c'$ that
	\begin{align*}
		\EE[N_6] & = \EE[N_6|\MM] \Pr[\MM]
		+ \EE[N_6|\MM^c] \Pr[\MM^c]
		\\
		& \leq \exp(-\delta_1(f_0/2) n r^d) + c' \exp(-cn r^d),
	\end{align*}
	which implies the result asserted.
\end{proof}

\subsection{Normal approximation}
\label{s:proof_CLT_k}

We shall prove our central limit theorem for $S'_{n,k}$
by expressing it as a sum of variables having the structure of
an
 {\em $m$-dependent random field},
 whose definition we now recall.  
Let $X=(X_{\alpha},  \alpha \in {\cal V})$ be a
collection of random variables indexed by a set ${\cal V} \subset \Z^d$. 
Given $m >0$,
we say $X$ is an {\em $m$-dependent random field}
if for any two subsets $A_1, A_2$ of $\Z^d$ with $\min_{\alpha \in A_1, 
\beta \in A_2}
\|\alpha -\beta\|_\infty >m$, the
sigma-algebras $\sigma\{
X_{\alpha}, \alpha \in A_1 \}$, and $\sigma\{ X_{\alpha}, \alpha
\in A_2 \}$, are mutually independent.

\begin{lemma} \label{l:ChenShao}  (see \cite[Theorem 2.6]{CS})
	Let $2 < q \leq 3$.  Let ${\cal V} \subset \Z^d$ and let
	$(W_i, \ i \in {\cal V})$ be an $m$-dependent
	random field with $\EE[W_i]=0$ for each $i \in {\cal V}$.
	Let $W =
\sum_{i \in {\cal V} } W_{i}.$  Assume  that $\E [W^2] = 1$
	and $\E[|W_i |^q] < \infty$ for all $i \in  {\cal V}$.
	Then 
	\begin{align}
 \label{CS1}
		\dk (W,N(0,1)) \leq 75 (10 m +1)^{(q - 1)d } 
		\sum_{i \in {\cal V}} \EE [|W_i|^q].
	\end{align}
\end{lemma}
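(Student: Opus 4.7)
The plan is to prove the bound by Stein's method adapted to locally dependent random fields, following the standard strategy of Chen and Shao. First I would set up Stein's equation: for each $z \in \R$, let $f_z$ denote the bounded solution of
\[
f'(w)-wf(w)=\1\{w\le z\}-\Phi(z),
\]
which enjoys $\|f_z\|_\infty\le\sqrt{2\pi}/4$ and $\|f_z'\|_\infty\le 1$. The starting point is then the identity $\dk(W,N(0,1))=\sup_z|\E[f_z'(W)-Wf_z(W)]|$, so the task reduces to estimating $|\E[f_z'(W)-Wf_z(W)]|$ uniformly in $z$.

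Next I would exploit the $m$-dependence structure. For each $i\in\cV$ set $A_i:=\{j\in\cV:\|i-j\|_\infty\le m\}$, $A_i^*:=\{j\in\cV:\|i-j\|_\infty\le 2m\}$, and $S_i:=\sum_{j\in A_i}W_j$, $T_i:=\sum_{j\in A_i^*}W_j$. By $m$-dependence, $W_i$ is independent of $W-S_i$, and $S_i$ is independent of $W-T_i$. Since $\E[W_i]=0$,
\[
\E[Wf_z(W)]=\sum_{i\in\cV}\E\bigl[W_i\bigl(f_z(W)-f_z(W-S_i)\bigr)\bigr].
\]
Writing $f_z(W)-f_z(W-S_i)=\int_0^{1}S_if_z'(W-S_i+tS_i)\,dt$ and combining with $\E[f_z'(W)]=\sum_i\E[W_i^2]\E[f_z'(W)]$ (using $\E[W^2]=1$), one rearranges to express the Stein discrepancy as
\[
\sum_i\E\!\left[W_iS_i\int_0^1\bigl(f_z'(W-S_i+tS_i)-f_z'(W)\bigr)dt\right] \;+\; \text{correction terms}.
\]

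The key difficulty, and the crux of the Chen–Shao argument, is that $f_z'$ has a jump at $w=z$, so one cannot simply Taylor-expand to second order. The standard remedy is a \emph{randomized concentration inequality}: bound $\Pr[|W-S_i+\xi|\le|S_i|]$ for $|\xi|\le|S_i|$ by a linear function of $\E[|S_i|^{q-1}]$, itself obtained by a preliminary (recursive) Stein-type argument on the truncated sum $W-T_i$ which is independent of $S_i$. Once the concentration bound is available, one bounds the displayed expression by terms of the form $\E[|W_iS_i|\cdot \mathbf{1}\{|W-S_i-z|\le|S_i|\}]$, which Hölder's inequality controls by $\E[|W_i|^q]^{1/q}\E[|S_i|^{q}]^{(q-1)/q}$ times a concentration factor.

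Finally, the combinatorial factor $(10m+1)^{(q-1)d}$ emerges from estimating $\E[|S_i|^q]\le|A_i|^{q-1}\sum_{j\in A_i}\E[|W_j|^q]$ via Jensen, with $|A_i|\le(2m+1)^d$, together with the fact that in the double sum $\sum_i\sum_{j\in A_i^*}$ each $j$ appears in at most $(4m+1)^d$ neighborhoods; combining these with the universal constants from the Stein solution and the concentration inequality yields the stated $75(10m+1)^{(q-1)d}$. The main obstacle is genuinely the first step in paragraph three: establishing the concentration inequality without circularity, since it is what allows the non-smooth Stein solution to be handled in the Kolmogorov metric rather than the weaker Wasserstein metric.
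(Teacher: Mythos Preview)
The paper does not actually prove this lemma: it is stated with the parenthetical ``(see \cite[Theorem 2.6]{CS})'' and used as a black box, with no proof given. So there is no proof in the paper to compare your proposal against.

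Your sketch is a reasonable outline of the Chen--Shao argument itself: Stein's equation for the Kolmogorov metric, the local-dependence decomposition via the neighbourhoods $A_i$ and $A_i^*$, and the randomized concentration inequality to handle the discontinuity of $f_z'$. As a summary of how \cite[Theorem 2.6]{CS} is proved it is broadly accurate, though some details are loose (for instance the identity $\E[f_z'(W)]=\sum_i\E[W_i^2]\E[f_z'(W)]$ is not quite the right rearrangement, and the precise bookkeeping that produces the constant $75(10m+1)^{(q-1)d}$ is more delicate than your last paragraph suggests). But since the paper simply imports the result, a full reproduction of the Chen--Shao proof is not what is called for here; a citation suffices.
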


We shall apply Lemma \ref{l:ChenShao} to $S'_{n,k}$ to prove (\ref{e:CLT_dk})
from Theorem \ref{t:int_k}.
The claim is restated in the form of a proposition. 

\begin{proposition}
\label{p:clt_iso_po}
	Set $b= \limsup_{n \to \infty} n \theta r^d/(\log n)$.
	Set $\sigma= (\Var[S'_{n,k}])^{1/2}$. Let $\eps >0$.
	Then as $n \to \infty$,
	\begin{align}
	\dk( \sigma^{-1}(S'_{n,k}-I_{n,k}),N(0,1)) = O(n^{\eps + (bf_0-1)/2 }). 
	\label{e:SnPoCLTK}
\end{align}
\end{proposition}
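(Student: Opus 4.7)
The plan is to realise $\sigma^{-1}(S'_{n,k} - I_{n,k})$ as the sum of a centred $m$-dependent random field on $\Z^d$ and then apply the Chen-Shao inequality (Lemma \ref{l:ChenShao}) with $q = 3$. Partition $\R^d$ into half-open cubes $Q_\ell$ ($\ell \in \Z^d$) of side length $r$, and let $W_\ell$ denote the number of $k$-clusters of $G(\Po_n, r)$ whose leftmost vertex (in lexicographic order) lies in $Q_\ell \cap A$, so that $S'_{n,k} = \sum_\ell W_\ell$ (with $W_\ell = 0$ whenever $Q_\ell$ is far from $A$). Since a $k$-cluster has diameter at most $(k-1)r$ and its status as a full component depends only on the absence of other Poisson points within distance $r$ of it, $W_\ell$ is measurable with respect to $\Po_n$ restricted to $Q_\ell + B_{(k+1)r}(o)$. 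Consequently $\{W_\ell - \EE[W_\ell]\}_{\ell \in \Z^d}$ is $m$-dependent with $m$ a positive integer depending only on $k$ and $d$.

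The key ingredient is a deterministic bound $W_\ell \leq K(d)$, where $K(d)$ depends only on $d$. Indeed, if $v_1, \ldots, v_M$ are the leftmost vertices of distinct $k$-clusters $\varphi_1, \ldots, \varphi_M$ with each $v_i \in Q_\ell$, then for $i \neq j$ one has $\|v_i - v_j\| > r$, since otherwise there would be a direct edge between $\varphi_i$ and $\varphi_j$, contradicting that each is a whole component. The cube $Q_\ell$ has Euclidean diameter $r\sqrt{d}$ and so contains only finitely many pairwise $r$-separated points, giving $M \leq K(d)$. Combined with $\EE[W_\ell] \leq K(d)$, this yields $\EE|W_\ell - \EE[W_\ell]|^3 \leq 8 K(d)^2\, \EE[W_\ell]$, and summing over $\ell$ gives
\begin{align*}
\sum_\ell \EE|W_\ell - \EE[W_\ell]|^3 \;\leq\; 8 K(d)^2\, I_{n,k}.
\end{align*}

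Applying Lemma \ref{l:ChenShao} with $q = 3$ to the normalised field $\sigma^{-1}(W_\ell - \EE[W_\ell])$ (whose sum has unit variance) therefore yields a constant $c = c(d,k)$ such that
\begin{align*}
\dk\!\left(\sigma^{-1}(S'_{n,k} - I_{n,k}), \cN\right) \;\leq\; c\, \sigma^{-3} I_{n,k}.
\end{align*}
By Proposition \ref{p:var_k}, $\sigma^2 \sim I_{n,k}$, so the right side is $O(I_{n,k}^{-1/2})$. Finally, Lemma \ref{l:Ilower} together with \eqref{e:supcri3} gives, for any $\eps > 0$, $I_{n,k} \geq n^{1 - b f_0 - 2\eps}$ for all sufficiently large $n$ (choose $f_0^+ > f_0$ and $\delta > 0$ small so that $f_0^+(b+\delta) < b f_0 + 2\eps$ and use $n\theta r^d \leq (b+\delta)\log n$ eventually). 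This produces $\dk = O(n^{(b f_0 - 1)/2 + \eps})$, as claimed. The only delicate point in the argument is the deterministic boundedness of $W_\ell$; without it, the third-moment sum would pick up polylogarithmic factors in $n$ that are awkward to control.
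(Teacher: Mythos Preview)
Your proof is correct and follows the same overall strategy as the paper---write $S'_{n,k}$ as a sum over a cube-indexed $m$-dependent field and apply Chen--Shao with $q=3$---but your execution of the third-moment estimate is different and somewhat cleaner.

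The paper takes cubes of side $\delta r$ with $\delta < d^{-1/2}$, so that any two Poisson points in the same cube are joined, forcing the per-cube count $U_z$ to be Bernoulli; then $\E|U_z-p_z|^3 \le p_z$, and the paper bounds $\sum_z p_z$ by estimating $p_z$ separately for interior cubes (via $\nu(B_r(x))\ge f_0\theta r^d$) and boundary cubes (via Lemma~\ref{l:bdy_est}), summing over $O(r^{-d})$ and $O(r^{1-d})$ cubes respectively, and invoking Lemma~\ref{l:Ilower} for $\sigma^{-3}$. You instead take cubes of side $r$, use the packing observation that leftmost vertices of distinct components are pairwise $r$-separated to get $W_\ell \le K(d)$ deterministically, and then simply use $\sum_\ell \E W_\ell = I_{n,k}$. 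This bypasses the interior/boundary split entirely and lands directly on $\dk = O(\sigma^{-3}I_{n,k}) = O(I_{n,k}^{-1/2})$, which is exactly what the paper's longer route also produces. The paper's approach has the minor advantage that the Bernoulli structure is transparent, but your packing argument is equally elementary and avoids re-deriving, in effect, the upper bound on $I_{n,k}$ already available from Theorem~\ref{t:Inonunif}.
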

\begin{proof} 
	Let $f_0^-< f_0$ and $f_0^+ > f_0$, and $f_1^- < f_1^* < f_1$. 
	Fix $\delta \in (0,d^{-1/2})$, with $f_0(1-2 d \delta)^d > f_0^-$
	and also $f_1^* (1-2d \delta)^d > f_1^-$.
	Given $n$,
	partition $\R^d$ into cubes of side length $\delta r= \delta r(n)$
	centred on the
	points of $\delta r \Z^d$ and indexed by $\Z^d$; for $z \in \Z^d$
	let $C_z $ be the cube
	in the partition that is centred on $\delta rz$. Let
	${\cal V} = \{z \in \Z^d: C_z \cap A \neq \emptyset\}$.
	Then $|{\cal V}|= O(r^{-d})$ as $n \to \infty$.

	For $z \in {\cal V}$, let $U_z $ be the number of
	components of order $k$ in $G(\Po_n,r)$ having
	their left-most vertex in $C_z$, set $p_z:= \EE[U_z]$,
	and let $W_z = \sigma^{-1}
	(U_z - p_z)$. Then $\sum_{z \in {\cal V}}U_z  = S'_{n,k}$
	and  $\sum_{z \in {\cal V}} W_z =  
	\sigma^{-1} (S'_{n,k} - I_{n,k})$.
	Moreover, $(W_z,z \in {\cal V})$ is a centred $m$-dependent random
	field, where $m$ is independent of $n$. Hence  by Lemma 
	\ref{l:ChenShao} (taking $q=3$),
	there is a constant $c$ independent of $n$ such
	that
	\begin{align}
		\dk(\sigma^{-1}(S'_{n,k}-I_{n,k}),N(0,1))
		\leq
		c \sum_{z \in \nu} \E[|W_z|^3]
		= c \sigma^{-3} \sum_{z \in \nu} \E[|U_z - p_z|^3].
		\label{e:0830a}
	\end{align}
	Since $\delta < d^{-1/2}$, any two points of $\Po_n$ in the same
	cube are connected in $G(\Po_n,r)$, and therefore $U_z$ is
	a Bernoulli random variable with parameter $p_z$, for
	each $z \in \cV$. Hence
	$\E[|U_z-p_z|^3] = p_z(1-p_z)^3 + (1-p_z)p_z^3 \leq p_z$.

	Let ${\cal V}_1:= \{z \in \cV: \delta r z \in A^{(r)}\}$
	and $\cV_2:= \cV \setminus \cV_1$.

	Let $z \in \cV$.
	If $\Po_n ( B_{r-d \delta r} ( \delta r z)) \geq k+1$, then
	$U_z =0$.  Therefore there exists $c>0$ such that for $z \in \cV_1$,
	$$
	p_z \leq \Pr[\Po_n ( B_{r-d \delta r}(\delta r z) ) \leq k] 
	\leq c (nr^d)^k \exp(-  f_0 \theta (1- d\delta)^d n r^d ),
	$$
	so that by Proposition  \ref{p:var_k},
	there exists $c'$ such that for $n$ large 
	the contribution of $\cV_1$ to the right hand
	side of (\ref{e:0830a}) is at most
	$$
	c' r^{-d} \exp (-f_0 \theta (1-2d \delta)^d n r^d) I_{n,k}^{-3/2} 
	\leq c' r^{-d} \exp (-f_0^- \theta n r^d) I_{n,k}^{-3/2}, 
	$$
	and by Lemma \ref{l:Ilower}, for $n$ large this is at most 
	$$
	n^{-1/2} (nr^d)^{-1} \exp (  
	((3/2)f_0^+ - f_0^- ) \theta n r^d).
	$$
	Set $b := \limsup_{n \to \infty} n \theta r^d/(\log n)$.
	Then $b < 1/f_0$ by (\ref{e:supcri3}), and 
	for any $\eps >0$, provided $f_0^+$ and $f_0^-$ are
	 chosen close enough to
	$f_0$ the above is bounded by $n^{\eps +(bf_0 -1)/2 }$.

	Now consider $z \in \cV_2$. For such $z$, using
	Lemma \ref{l:bdy_est} we have
	$$
	p_z \leq \Pr[\Po_n ( B_{r-d \delta r}(\delta r z) ) \leq k] 
	\leq c (nr^d)^k \exp(-  f_1^* (1- d\delta)^d (\theta/2)  n r^d ).
	$$
	Also
	we claim that $|\cV_2| = O(r^{1-d})$.  
	Indeed, using the first part of Lemma \ref{l:bdy_est}, we can
	cover $\partial A$ by $O(r^{1-d})$ balls of radius $r$.
	Taking balls of radius $3r$ with the same centres gives a collection
	of $O(r^{1-d}) $ balls such that each each point of $\cV_2$ lies 
	in at least one of these balls. Since the number of
	points of $\cV_2$ contained in any one of these balls is uniformly
	bounded, the claim follows.

	Therefore there exists a constant $c''$ such that
	the contribution of $\cV_2$ to the right hand side
	of (\ref{e:0830a}) is at most
	$
	c'' r^{1-d} \exp(- f_1^- (\theta/2)  n r^d)
	I_{n,k}^{-3/2},
	$
	and by Lemma \ref{l:Ilower}, given $\eps >0$, for $n$
	large this is at most
	$$
	n^{-1/2} r (nr^d)^{-1} \exp(   ((3/2)f_0^+ - f_1^-/2) \theta nr^d)
	< n^{\eps -(1/2) - (1/d) + (b/2)(3f_0- f_1)}.  
	$$
	Using (\ref{e:supcri3}) we have that $b(f_0-f_1/2) < 1/d$
	and therefore the exponent of $n$ above is
	at most $\eps - (1/2) + b f_0/2$, i.e. $\eps + (bf_0-1)/2$.
	Combining this with the contribution of ${\cal V}_1$ and
	applying (\ref{e:0830a}) yields (\ref{e:SnPoCLTK}).
\end{proof}


For a binomial counterpart of Proposition \ref{p:clt_iso_po}
using Wasserstein distance, we use  the following lemma, which
is based on a result of Chatterjee \cite{Cha08}, which requires further
notation.

Recall the definition of $\bN(\R^d)$ from Section \ref{s:PoSprime}.
Given $F: \bN(\R^d) \to \R$ and $x \in \R$, set
	 $D_xF(\X):= F(\X \cup \{x\} ) - F(\X)$, and set
	$\|DF\| := \sup_{x \in \R^d,\X \in \bN(\R^d)}
	|D_xF(\X)|$.
	Given such $F$ and given $s \in (0,\infty)$, we say that
	the constant $s$ is
 a {\em radius of stabilization} for $F$ if
 $D_xF(\X) $ is determined by $\X \cap B_s(x)$
for all $x \in \R^d, \X \in \bN(\R^d)$, i.e.
$D_xF(\X) = D_xF (\X \cap B_s(x))$ for all $x,\X$.

\begin{lemma}
	\label{l:fromChat}
	Let $n \in \N$, $s \in (0,\infty)$. 
Suppose $F:\bN(\R^d) \to \R$ is measurable with
	$\|DF\| 
	< \infty $
	and $\sigma := \sqrt{\Var[F(\X_n)]} \in 
	(0,\infty)$, and $s$
is a radius of stabilization for $F$.
Then 
	\begin{align}
		\dw \left( \frac{F(\X_n) - \EE F(\X_n)}{\sigma},\cN \right)
		\leq \frac{ C n^{1/2}}{\sigma^2}  \|DF\|^2(\E[(\X_{n+4}(B_{s}
		(X_1)))^4])^{1/4}
		+ \frac{4n}{ \sigma^3}  \|DF\|^3,
		\label{e:Chat}
	\end{align}
	where $C$ is a universal constant.
\end{lemma}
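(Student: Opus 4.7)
The plan is to derive \eqref{e:Chat} from the abstract Wasserstein normal approximation theorem for functionals of independent random variables in \cite{Cha08}, specialised to the stabilising functional $F(\X_n)=F(X_1,\ldots,X_n)$. Chatterjee's result produces, for any (symmetric) function of independent coordinates, a bound of the form
\[
        \dw\!\left(\frac{F(\X_n)-\E F(\X_n)}{\sigma},\cN\right)
        \leq \frac{C_1}{\sigma^2}\sqrt{\Var(T)}
        + \frac{C_2}{\sigma^3}\sum_{i=1}^n \E|\Delta_i F|^3,
\]
where $\Delta_i F:=F(X_1,\ldots,X_n)-F(X_1,\ldots,X_i',\ldots,X_n)$ is the first-order difference obtained by replacing $X_i$ with an independent copy $X_i'$, and $T$ is an auxiliary quadratic statistic built from these differences through a further resampling of an independent random subset of coordinates. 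The four extra points appearing in $\X_{n+4}$ in \eqref{e:Chat} accommodate the copies $X_i',X_j'$ together with those used in the construction of $T$.

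The third-moment term is immediate from the hypothesis $\|DF\|<\infty$: by the triangle inequality $|\Delta_i F|\leq 2\|DF\|$, and hence $\sum_i\E|\Delta_i F|^3\leq 8n\|DF\|^3$, producing (after adjusting constants) the summand $4n\|DF\|^3/\sigma^3$ in \eqref{e:Chat}.

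The work is therefore to control $\Var(T)$, and here the stabilisation radius $s$ is crucial. Since $D_xF(\X)$ is determined by $\X\cap B_s(x)$, each $\Delta_i F$ is determined by the points of $\X_n\cup\{X_i'\}$ lying in $B_s(X_i)\cup B_s(X_i')$. A product $\Delta_i F\cdot \Delta_j F$ entering $T$ therefore vanishes unless the respective stabilisation neighbourhoods interact through a common point of the configuration; in particular, given $X_i$, the number of indices $j$ yielding a nonzero contribution is bounded by a constant multiple of $\X_{n+4}(B_{2s}(X_i))$, which after covering $B_{2s}(x)$ by a bounded number of translates of $B_s$ and exchanging coordinates reduces to a constant multiple of $\X_{n+4}(B_s(X_1))$. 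Expanding $\E[T^2]$, using $|\Delta_i F|\leq 2\|DF\|$ on every factor, and applying Cauchy--Schwarz to the resulting double sum over interacting pairs leads to a bound of the shape
\[
        \Var(T)\;\leq\; C\,n\,\|DF\|^4\,\bigl(\E[\X_{n+4}(B_s(X_1))^4]\bigr)^{1/2},
\]
and taking square roots yields the $\sqrt{n}\,\|DF\|^2\,(\E[\X_{n+4}(B_s(X_1))^4])^{1/4}$ factor in the first summand of \eqref{e:Chat}. The main obstacle is the combinatorial bookkeeping required to check, for Chatterjee's precise form of $T$, that after expanding the square the surviving contributions are indeed all controlled by the fourth moment of $\X_{n+4}(B_s(X_1))$ (as opposed to, say, a higher moment); once this is pinned down the remaining steps are routine.
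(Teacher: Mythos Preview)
Your plan is essentially correct and would work, but it takes a longer route than the paper does. You start from Chatterjee's general bound (his Theorem~2.2, involving $\sqrt{\Var(T)}$ and third moments of $\Delta_i F$) and then propose to carry out by hand the combinatorial bookkeeping that reduces $\Var(T)$ to a fourth moment of the local point count. This is exactly the work Chatterjee already packaged into his Theorem~2.5, which is stated for functions admitting a \emph{symmetric interaction rule}: a random graph $G(X)$ on $[n]$ such that the second-order difference $F(\bx)-F(\bx^j)-F(\bx^i)+F(\bx^{ij})$ vanishes whenever $i$ and $j$ are not adjacent in the symmetric extension $G'(X,X')$. Theorem~2.5 then gives the bound directly in terms of $\max_j|\Delta_j f|$ and the fourth moment of $1+\deg_{G'}(1)$.

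The paper therefore bypasses your variance computation entirely. Its proof consists of: (i) the observation $|\Delta_j f|\le 2\|DF\|$, which you also make; (ii) the verification that the geometric graph $G(\X_n,s)$ is a symmetric interaction rule for $F$, done by noting that stabilisation forces $F(\bx)-F(\bx^j)=F(\bx^i)-F(\bx^{ij})$ whenever all four distances $\|x_i-x_j\|,\|x'_i-x_j\|,\|x_i-x'_j\|,\|x'_i-x'_j\|$ exceed $s$; and (iii) identifying the symmetric extension with $G(\X_{n+4},s)$, so that the degree of vertex $1$ in $G'$ is $\X_{n+4}(B_s(X_1))-1$. Plugging into Theorem~2.5 gives \eqref{e:Chat} immediately. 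What your approach buys is self-containment (you do not need to look up the interaction-rule formalism), at the cost of redoing a calculation Chatterjee already did; what the paper's approach buys is brevity, reducing the proof to a two-line check of the interaction-rule condition.
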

\begin{proof}
We apply  \cite[Theorem 2.5]{Cha08}.  As  explained below, the graph
$G(\X_n,s)$ is a  {\em symmetric interaction rule} (in the sense of 
\cite{Cha08}) for $F$, and we can take its
	 symmetric extension $G'$ to be $G(\X_{n+4},s)$.
	The quantity denoted $\Delta_j f(X)$ in \cite[Theorem 2.5]{Cha08} 
	has its absolute value bounded by $2\|DF\|$.

	Suppose
	 $\bx = (x_1,\ldots,x_n) \in (\R^d)^n$ and 
	 $\bx' = (x'_1,\ldots,x'_n) \in (\R^d)^n$.
	 For $i,j \in \{1,\ldots,n\}$ with $i \neq j$, set  
	$\bx^{i}= (x_1,\ldots,x_{i-1},x'_i,x_{i+1},\ldots,x_n)$
	and $\bx^{ij}= (\bx^{i})^j$. As done elsewhere in this
	paper, we identify $\bx$ with the set $\{x_1,\ldots,x_n\}$
	(with any repeated entry in $\bx$ being included just once
	in $\{x_1,\ldots,x_n\}$),
	and likewise for $\bx^i$ and $\bx^{ij}$.

	By the stabilization condition $F(\bx) - F(\bx^j)$ is determined
	by $x_j, x'_j$ and the sets
	$\bx \cap B_s(x_j)$ and $\bx \cap B_s(x'_j)$. 
	Similarly,
	$F(\bx^i) - F(\bx^{ij})$ is determined
	by  $x_j, x'_j$ and the sets $\bx^i \cap B_s(x_j)$ and
	$\bx^i \cap B_s(x'_j)$. 

	If $\|x_i-x_j\|$, $\|x'_i-x_j\|$, $\|x_i-x'_j\|$
	and $\|x'_i -x'_j\|$ all exceed $s$ then  
	 $\bx \cap B_s(x_j) = \bx^i \cap B_s(x_j)$ and
	 $\bx \cap B_s(x'_j) = \bx^i \cap B_s(x'_j)$,
	so that $F(\bx)- F(\bx^j)= F(\bx^i)-F(\bx^{ij})$,
	and this shows that $G(\bx,s)$ really is a symmetric interaction
	rule for $F$ in the sense of \cite{Cha08}, as claimed earlier.
\end{proof}
We can now provide the binomial counterpart for Proposition 
\ref{p:clt_iso_po}
using
Wasserstein distance. This gives us the penultimate assertion
(\ref{e:CLTBi}) of Theorem \ref{t:int_k}.


\begin{proposition}
\label{p:clt_iso_bin}
	Set $\sigma= \sqrt{\Var[S_{n,k}]}$, and  set $b:= \limsup_{n \to \infty}
	n\theta r^d/(\log n) $. Then
\begin{align}
	\dw( \sigma^{-1}(S_{n,k}-\EE[S_{n,k}]),N(0,1)) & = O \left(
	\frac{n}{\sigma^3} \right) \nonumber \\
	& = O(n^{ \eps +(3b f_0  - 1)/2}).  \label{e:SnBiCLTW}
\end{align}
\end{proposition}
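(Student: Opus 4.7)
The plan is to apply Lemma \ref{l:fromChat} to the functional $F(\X) := K_{k,r}(\X)$, the number of components of order $k$ in $G(\X,r)$. The three inputs required are (i) a bound on $\|DF\|$, (ii) a radius of stabilization $s$ for $F$, and (iii) a bound on the moment term $\E[(\X_{n+4}(B_s(X_1)))^4]$. Once these are in hand, the conclusion follows by plugging into (\ref{e:Chat}), using the variance asymptotics of Proposition \ref{p:var_iso_bin} together with the lower bound on $I_{n,k}$ from Lemma \ref{l:Ilower}, and doing some book-keeping with the assumption (\ref{e:supcri3}).

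First I would verify that $\|DF\| \leq c_1$ for some $c_1 = c_1(d,k)$. Adding a point $x$ to $\X$ can create at most one new $k$-cluster (the component of $\X \cup \{x\}$ containing $x$) and destroy only those $k$-clusters of $G(\X,r)$ that intersect $B_r(x)$; by a standard packing argument (such $k$-clusters are pairwise at distance $>r$ and each has diameter at most $(k-1)r$), the number of such clusters is bounded by a constant depending only on $d$ and $k$. Second, I would show that $s := kr$ is a radius of stabilization. Indeed, $D_xF(\X)$ depends only on the component structure of $G(\X,r)$ near $x$: a cluster of $G(\X,r)$ intersecting $B_r(x)$ either has size at most $k-1$ (in which case it is contained in $B_{kr}(x)$), or has size $\geq k$ (in which case this can be detected by finding $k$ of its vertices via a BFS started within $B_r(x)$, all of which lie in $B_{kr}(x)$). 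In both cases $\X \cap B_{kr}(x)$ suffices to compute $D_xF(\X)$.

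Third, for the moment term, given $X_1$, the variable $\X_{n+4}(B_{kr}(X_1))$ is stochastically dominated by a $\Bin(n+3, \fmax \theta (kr)^d)$ random variable, so by a standard binomial $L^4$ estimate,
\begin{align*}
	\bigl(\E[(\X_{n+4}(B_{kr}(X_1)))^4]\bigr)^{1/4} = O(nr^d)
\end{align*}
since $nr^d \to \infty$. Substituting these ingredients into (\ref{e:Chat}) gives
\begin{align*}
	\dw\!\left(\frac{S_{n,k}-\E[S_{n,k}]}{\sigma},\cN\right)
	= O\!\left(\frac{n^{3/2} r^d}{\sigma^2}\right) + O\!\left(\frac{n}{\sigma^3}\right).
\end{align*}
By Proposition \ref{p:var_iso_bin} and Proposition \ref{p:Snmeanbin}, $\sigma^2 \sim I_{n,k}$, and by Lemma \ref{l:Ilower}, for any $f_0^+ > f_0$ we have $\sigma^2 \geq n \exp(-f_0^+ \theta nr^d)$ for all large $n$. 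Since $r^d = O((\log n)/n)$ by (\ref{e:supcri3}), this yields $n^{3/2} r^d/\sigma^2 = O(n^{1/2+o(1)} \cdot r^d e^{f_0^+ \theta nr^d})$, which for $f_0^+$ close enough to $f_0$ is of order $n^{-1/2 + bf_0 + \eta}$ for any $\eta>0$. The second term is of order $n^{-1/2} \exp((3/2) f_0^+ \theta n r^d) = n^{\eps + (3bf_0-1)/2}$ for any $\eps>0$ (on choosing $f_0^+$ close to $f_0$). Since $bf_0 \geq 0$, the first term is dominated by the second, giving the claimed bound.

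The main (minor) obstacle is the verification of the stabilization radius and the bounded-difference property; both are standard once one formalises the argument sketched above. The rest of the argument is routine plugging in and comparing exponents of $n$, which works cleanly because $r^d$ is polylogarithmic in $n^{-1}$, ensuring the first term in (\ref{e:Chat}) is negligible compared to the second.
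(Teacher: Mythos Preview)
Your approach is essentially identical to the paper's: apply Lemma \ref{l:fromChat} to $F = K_{k,r}$, bound $\|DF\|$ by a packing constant, take a stabilization radius proportional to $r$, bound the fourth moment of the local count by $O((nr^d)^4)$, and then check that the second term in (\ref{e:Chat}) dominates the first and has the stated order.

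The one genuine slip is the stabilization radius. You claim $s = kr$ suffices, but it does not: to evaluate $D_xF(\X)$ you must count the $k$-clusters of $G(\X,r)$ that intersect $B_r(x)$ (these are destroyed when $x$ is added), and for that you must distinguish clusters of size exactly $k$ from those of size $>k$, not merely size $\leq k-1$ from size $\geq k$ as in your dichotomy. A cluster meeting $B_r(x)$ with exactly $k$ vertices has all its vertices in $B_{kr}(x)$, but to certify there is no $(k+1)$-th vertex you must look out to $B_{(k+1)r}(x)$. Concretely, with $k=2$, $d=1$, $r=1$, $x=0$: the configurations $\X_1 = \{0.5, 1.5\}$ and $\X_2 = \{0.5, 1.5, 2.3\}$ agree on $B_2(0)$ but give $D_0F = -1$ and $D_0F = 0$, respectively. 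The paper takes $s = (k+1)r$, which repairs this without affecting any of the subsequent asymptotics. (A second, cosmetic point: $\X_{n+4}(B_s(X_1)) \geq 1$ always, so the correct stochastic bound is $1 + \Bin(n+3, \fmax \theta s^d)$ rather than $\Bin(n+3,\cdot)$; this is harmless for the fourth-moment estimate.)
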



\begin{proof}
	Given $n$, we apply Lemma \ref{l:fromChat}, taking  $F(\X):=
	K_{k,r}(\X)$,
	the number of $k$-components in $G(\X,r)$ (with $r=r(n)$
	as usual). Then $\|DF\|$ is bounded above by a constant independent of
	$n$, since for
	all $x \in \R^d$, $r >0$ and finite $\X \subset \R^d$ the
	number of components of $G(\cX,r)$ intersecting $B(x,r)$
	is bounded by a constant depending only on $d$ (and not
	on $\X, x$ or $r$). Also $\|DF\| \geq 1$. By
	Propositions \ref{p:Snmeanbin}
	and
	\ref{p:var_k}, $\E[F(\X_n)] \sim I_{n,k}$ and 
	$\sigma^2 \sim I_{n,k}$ as $n \to \infty$.

	Clearly $s= (k+1)r$ is a radius of stabilization for $F$.
	Also $\X_{n+4}(B_{(k+1)r}(X_1))$ is stochastically dominated by 
	$1+\Bin(n+4,\fmax \theta ((k+1)r)^d)$ and hence $\EE[ \X_{n+4}
	(B_{(k+1)r}(X_1))^4] \leq
	c (nr^d)^4$, for some constant $c$. Therefore
	in the present instance, the first term in the right hand
	side of (\ref{e:Chat}) divided by the second term
	is $O(n^{-1/2}\sigma (nr^d)) = O(n^{-1/2} (nr^d)I_n^{1/2})$.
	which tends to zero because $I_n^{1/2}
	= O( n^{1/2} (nr^d)^{(k-1)/2} e^{-c nr^d})$ 
	for some $c>0$, by (\ref{e:exp_Snk}) and (\ref{e:volLB}).
	Then we obtain the first line of (\ref{e:SnBiCLTW}) from (\ref{e:Chat}).
	 
	 For the second line of (\ref{e:SnBiCLTW}) we use
Proposition \ref{p:var_iso_bin}, Lemma \ref{l:Ilower} and (\ref{e:supcri}).
\end{proof}

\begin{proof}[Proof of Theorem \ref{t:int_k}] 
	We have already proved
	(\ref{e:k_pois}),
	(\ref{e:k_bin}),
	(\ref{e:CLT_dk})
	and
	(\ref{e:CLTBi}).
	We now prove the normal approximation result (\ref{e:dKs}),
	using the Poisson approximation result (\ref{e:k_bin}).
	By the Berry-Esseen theorem $\dk(t^{-1/2}(Z_t-t),\cN)=
	O(t^{-1/2})$ as $t \to \infty$. Hence, by
		the triangle inequality for $\dk$,
		and the fact that $\dk(X,Y) \leq \dtv(X,Y)$ for any $X,Y$,
	we have
\begin{align}
	\dk( (\E S_{n,k})^{-1/2}(S_{n,k}- \E S_{n,k}),\cN)\le
	\dtv(S_{n,k},Z_{\E S_{n,k}}) + O((\E S_{n,k})^{-1/2}).
	\label{e:PotoNbin}
\end{align} 
By (\ref{e:k_bin})
	the first term in the right hand side of (\ref{e:PotoNbin})
	is $O(e^{-cnr^d})$ for some $c>0$.

By Proposition \ref{p:Snmeanbin} and Lemma \ref{l:Ilower}, 
$\E S_{n,k} = \Omega(n e^{-  nf_0^+\theta r^d})$.
		Therefore $(\E S_{n,k})^{-1/2} = o(n^{\eps+(bf_0-1)/2})$.
		Note that (\ref{e:supcri3}) implies
		$bf_0 <1$. Moreover, using (\ref{e:supcri3}) and
		taking both $\eps$ and $c$ to be sufficiently small,
		we see that
		the second term in the right hand side of 
		(\ref{e:PotoNbin}) is also
		$O(e^{-cnr^d})$, and hence the left hand
		side of (\ref{e:PotoNbin}) is $O(e^{-cnr^d})$ for some $c>0$.
		Thus
		\begin{align*}
		\dk((\Var S_{n,k})^{-1/2}(S_{n,k}- \E S_{n,k}),
			(\E S_{n,k}/\Var S_{n,k})^{1/2} \cN)
			\\
			= \dk( (\E S_{n,k})^{-1/2}
			(S_{n,k} - \E S_{n,k}) ,\cN ) =
			O(e^{-cnr^d}).
		\end{align*}
		Now using
		the fact that $\sup_{t \in [-1/2,1/2] \setminus \{0\}}
		t^{-1} \dk((1+t) \cN,\cN) < \infty$, and
		 Proposition \ref{p:var_iso_bin}, and the triangle
		inequality for $\dk$, we can
		 deduce
		(\ref{e:dKs}).

		The same argument works for $S'_{n,k}$, for all $d \geq 1$.
\end{proof}



\section{The sparse limiting regime}
\label{s:sparse}

Fix $k \in \N$.  In this section, instead of (\ref{e:supcri3}) we assume 
the `mildly sparse' limiting regime
\begin{align}
	\lim_{n \to \infty} nr_n^d = 0; ~~~~~~~~~~~~
	\lim_{n \to \infty} n (nr_n^d)^{k-1} = \infty.
	\label{e:subcri}
\end{align}
In Section \ref{subsecintro}, we mentioned how to obtain
a CLT for $S_{n,k}$ from previously known results in this regime under
the extra condition that $n(n r^{d})^k \to 0$.
We now describe, without giving full details, how we can adapt the methods of this paper to derive 
limiting expressions for means and variances, and CLTs
both for $S_{n,k}$ and $S'_{n,k}$, assuming only (\ref{e:subcri}).

We assume $\fmax < \infty$ but now we do not need to make any other assumptions
on $f$ or on the geometry of the support $A$  of $f$.
Defining $I_{n,k} := \E S'_{n,k}$ as before, we now have 
\bea
I_{n,k} \sim k!^{-1} n (nr^d)^{k-1} \int_{\R^d} f(x)^k dx \int_{(\R^d)^{k-1}}
h_1((o,\bx)) d \bx ~~~~~~~~~{\rm as} ~ n \to \infty.
\label{e:Insparse}
\eea
Moreover $\E[S_{n,k}] \sim I_{n,k}$ as $n \to \infty$.
These can be proved along the lines of
\cite[Propositions 3.1 and 3.2]{Pen03}.
By (\ref{e:Insparse}), (\ref{e:subcri}) implies $I_{n.k} \to \infty$ as 
$n \to \infty$. Under (\ref{e:subcri}), any factors of the form $e^{-n \nu(B_r(\bx))}$
arising in moment estimates no longer tend to zero, but remain bounded above
by 1.

Next we have
$$
\Var [S'_{n,k} ] = I_{n,k} (1+ O(nr^d)^k).
$$
To see this, follow the proof of Proposition \ref{p:var_k}.
In the sparse regime,  it can be seen directly from (\ref{e:J1def})
and (\ref{e:J2def})
that both $J_{1,n}$ and
$J_{2,n}$ are $O(n^{2k}r^{d(2k-1)})$, which is $O( (nr^d)^kI_{n,k})$.

 Next we have
 \bea
 \dtv (S'_{n,k} ,Z_{I_{n,k} } ) = O((nr^d)^k).
 \label{e:dtvPsparse}
 \eea
 For this, we can follow the proof of Proposition \ref{p:poi}.
 We now have $\E [ \xi_1(\bx)] = O((nr^d)^k)$, uniformly over $\bx$.
 Likewise $\E [ \xi_2(\bx)] = O((nr^d)^k)$.

For $S_{n,k}$, we consider only the case $k \geq 2$ in the sparse limiting
regime, since isolated vertices are not rare events.
We claim that if $k \geq 2$ then
\bea
\Var[S_{n,k}] = \E[S_{n,k}](1+ O(nr^d)^{k-1}).
\label{e:ratiosparse}
\eea
This is proved by following the proof of Proposition 
\ref{p:var_iso_bin}. The expression at (\ref{0729d}) is $O(n^{2k-1}r^{d(2k-2)})$
which is $O(I_{n,k} (nr^d)^{k-1})$. In the penultimate line of
(\ref{0730a}) the last factor (in square brackets) now simplifies 
to $O(n^{-1})$ so the expression in (\ref{0730a}) is $O(n^{2k-3})$.
Hence the first term in the right hand side of (\ref{0729e}),
multiplied by $n^2$, is now $O(n^{2k-1} r^{d(2k-2)})=
O((nr^d)^{k-1}I_{n,k})$. 
It can be seen directly that
the second and third terms in the right hand side of (\ref{0729e}),
multiplied  by $n^2$, are $O(n^{2k} r^{d(2k-1)})$
which is $O((nr^d)^{k} I_{n,k})$. 
Combining these estimates
shows that $\Var S_{n,k} - \E S_{n,k} = O((nr^d)^{k-1}I_{n,k})
 = O((nr^d)^{k-1} \E S_{n,k})$, as claimed.

 Next we claim that 
 by following the proof of Proposition \ref{p:PoApproxBi}, one can show
 \bea
 \dtv(S_{n,k},Z_{\E S_{n,k}})= O((nr^d)^{k-1}).
 \label{e:dtvsparse}
 \eea
 Indeed, for each $i \in [6]$ we have  $\E[N_i] = O((nr^d)^j)$
 with $j = j(i) \in \{k-1,k,k+1\}$. For  $N_6$ this can now be 
 seen directly without using the more involved
 proof of Lemma \ref{l:N7}. In particular, the restriction to $d \geq 2$
 is not needed for (\ref{e:dtvsparse}).

 By a similar argument to the proof of 
 (\ref{e:dKs}), using (\ref{e:dtvsparse}) and (\ref{e:ratiosparse}),
 we can obtain that
 $$
 \dk((\Var S_{n,k})^{-1/2}(S_{n,k}- \E S_{n,k}),\cN)
 = O(\max((nr^d)^{k-1}, (n (n r^d)^{k-1})^{-1/2})).
 $$
 Using (\ref{e:dtvPsparse}) instead of (\ref{e:dtvsparse}) we can similarly
 obtain that
 $$
 \dk((\Var S'_{n,k})^{-1/2}(S'_{n,k}- \E S'_{n,k}),\cN)
 = O(\max((nr^d)^{k}, (n (n r^d)^{k-1})^{-1/2})).
 $$


	{\bf Acknowledgements.} We thank Frankie Higgs and Oliver Penrose
	for some useful discussions in relation to this paper.


	This research was supported by Engineering and Physical Sciences Research Council (EPSRC) grant EP/T028653/1.


\bibliographystyle{plain}

\end{document}